\documentclass[preprint,12pt]{elsarticle}

\usepackage[T1]{fontenc}
\usepackage[utf8]{inputenc}
\usepackage{amsmath,amssymb,amsthm,mathtools}
\usepackage{bm}
\usepackage{microtype}
\usepackage{graphicx}
\usepackage{xcolor}
\usepackage{enumitem}
\usepackage{booktabs,tabularx,array,float}
\usepackage{tikz}
\usetikzlibrary{arrows.meta,calc,positioning,decorations.pathreplacing}
\newcolumntype{Y}{>{\raggedright\arraybackslash}X}
\newcolumntype{L}[1]{>{\raggedright\arraybackslash}p{#1}}
\usepackage{hyperref}

\hypersetup{
  colorlinks=true,
  linkcolor=blue!50!black,
  citecolor=blue!50!black,
  urlcolor=blue!50!black,
  pdftitle={Scattering criteria for the three-dimensional focusing energy-critical generalized Hartree equation},
  pdfauthor={Pang-Hung Chung and Dan Han}
}

\journal{Nonlinear Analysis}
\biboptions{sort&compress}
\allowdisplaybreaks
\numberwithin{equation}{section}

\newtheorem{theorem}{Theorem}[section]
\newtheorem{proposition}[theorem]{Proposition}
\newtheorem{lemma}[theorem]{Lemma}
\newtheorem{corollary}[theorem]{Corollary}

\theoremstyle{remark}

\newcommand{\R}{\mathbb R}
\newcommand{\C}{\mathbb C}
\newcommand{\eps}{\varepsilon}
\newcommand{\dd}{\,d}
\newcommand{\Dcal}{\mathcal D_{\alpha}}
\newcommand{\Ecal}{E_{\alpha}}
\newcommand{\Kcal}{K_{\alpha}}
\newcommand{\Vcal}{\mathcal V}
\newcommand{\Ttail}[1]{\mathcal T_{\alpha,#1}}
\newcommand{\Ncal}{\mathcal N_{\alpha}}
\newcommand{\Znorm}{Z_{\alpha}}
\newcommand{\Wnorm}{\mathcal W_{\alpha}}
\newcommand{\Nnorm}{\mathfrak N_{\alpha}}
\newcommand{\one}{\mathbf 1}
\newcommand{\Err}{\operatorname{Err}}
\newcommand{\mopt}{\mathfrak m^{\mathrm{opt}}}
\newcommand{\Aopt}{A^{\mathrm{opt}}}
\newcommand{\Mcal}{\mathcal M}
\newcommand{\Pcal}{\mathcal P}

\begin{document}

\begin{frontmatter}

\title{Scattering Criteria for the Three-Dimensional Focusing Energy-Critical Generalized Hartree Equation}

\author[aff1]{Pang-Hung Chung}
\ead{penghongzhong@yahoo.com}

\author[aff2]{Akidul Haque}
\ead{akidul.haque@louisville.edu}

\author[aff2]{Dan Han\corref{cor1}}
\ead{dan.han@louisville.edu}
\cortext[cor1]{Corresponding author.}

\address[aff1]{Department of Applied Mathematics, Guangdong University of Education, Guangzhou 510640, P. R. China}
\address[aff2]{Department of Mathematics, University of Louisville, Louisville, KY 40245, USA}

\begin{abstract}
Two nonradial scattering criteria are established for the three-dimensional focusing energy-critical generalized Hartree equation below the ground-state threshold, under a bounded-scale concentration--compactness reduction. The first criterion is formulated in terms of fixed-center occupation windows and quantitative bounds on the motion of the concentration center. The second is based on uniform low-frequency \(L^2\)-decay, which yields finite mass, precompactness in the inhomogeneous energy space, a zero-momentum normalization, and sublinear center drift. The proofs combine a localized Hartree virial identity with a critical Hardy--Littlewood--Sobolev estimate controlling the nonlocal tail.
\end{abstract}

\begin{keyword}
generalized Hartree equation \sep energy-critical scattering \sep compact critical element \sep localized virial identity
\MSC[2020] 35Q55 \sep 35B40 \sep 35B44 \sep 35B35
\end{keyword}

\end{frontmatter}

\section{Introduction}

Nonlinear Schr\"odinger equations with Hartree or Choquard interactions form a basic class of nonlocal dispersive models.  Their evolution is driven by a convolution potential generated by the solution itself, so the local dispersive mechanism must be combined with long-range spatial interaction.  Foundational well-posedness and scattering results, together with the Strichartz framework used in this paper, may be found in \cite{HayashiTsutsumi1987,GinibreVelo2000,Cazenave2003,KeelTao1998}.  On the variational side, the associated elliptic problem is governed by the sharp Hardy--Littlewood--Sobolev and Sobolev inequalities; optimizer theory and qualitative properties of Choquard ground states were developed in \cite{Lieb1983,Aubin1976,Talenti1976,MorozVanSchaftingen2013,LiLiuTangXu2026}.

At the energy-critical level, the classical quadratic-density Hartree equation has a comparatively mature scattering theory in dimensions five and higher.  Radial and nonradial defocusing scattering, focusing scattering below the ground-state threshold, and threshold dynamics have been established in a sequence of works \cite{MiaoXuZhao2007,MiaoXuZhao2011,MiaoXuZhao2009,LiMiaoZhang2009,MiaoWuXu2015}.  For generalized powers, local theory, sharp mass--energy dichotomies, radial scattering mechanisms, and threshold dynamics in intercritical regimes were developed in \cite{Arora2019,AroraRoudenko2020,AroraRoudenko2022,Zhou2025}.  In three dimensions, however, the homogeneous focusing generalized energy-critical problem retains a substantial nonradial difficulty: radial symmetry fixes the concentration center, whereas translation invariance allows a nonradial compact solution to drift through space.  The corresponding inhomogeneous model contains a spatial weight that penalizes such escape, but that pinning mechanism is absent in the homogeneous equation \cite{GuzmanXu2023}.

The purpose of this paper is to isolate two mechanisms that rule out bounded-scale compact obstructions to scattering in this translation-invariant setting.  One mechanism is geometric and measures how often the concentration center remains in a fixed spatial window; the other is spectral and converts uniform low-frequency decay into finite mass, zero momentum, and sublinear center motion.  Both mechanisms ultimately feed into a localized virial argument, but they require different information about the compact critical element.

We now introduce the equation studied here.  Fix $0<\alpha<3$, set $p:=3+\alpha$, and let
\begin{equation*}
  I_\alpha(x):=c_\alpha|x|^{-(3-\alpha)},
  \qquad c_\alpha>0,
\end{equation*}
be the Riesz kernel on $\R^3$.  For an interval $I\subset\R$ containing the origin and initial data $u_0\in\dot H^1(\R^3)$, we consider
\begin{equation}\label{eq:Hartree}
  \left\{
  \begin{aligned}
  i\partial_tu+\Delta u+\bigl(I_\alpha*|u|^p\bigr)|u|^{p-2}u&=0,
  &&(t,x)\in I\times\R^3,\\
  u(0,x)&=u_0(x),
  &&x\in\R^3.
  \end{aligned}
  \right.
\end{equation}
The relation $p=3+\alpha$ is precisely the energy-critical relation.  Indeed, for $\lambda>0$, the rescaling
\[
  u_\lambda(t,x):=\lambda^{1/2}u(\lambda^2t,\lambda x)
\]
preserves both \eqref{eq:Hartree} and the homogeneous energy norm:
\[
  \|u_\lambda(t)\|_{\dot H^1(\R^3)}
  =\|u(\lambda^2t)\|_{\dot H^1(\R^3)}.
\]

We use standard notation for Lebesgue and Sobolev spaces, mixed space--time norms, Fourier multipliers, and the free Schr\"odinger group $e^{it\Delta}$.  For a measurable set $E$, its indicator and Lebesgue measure are denoted by $\one_E$ and $|E|$.  We write
\[
  X\lesssim_{\Lambda}Y
\]
when $X\le C_{\Lambda}Y$ for a positive constant depending only on the parameters in $\Lambda$; an unsubscripted implicit constant may change from line to line.  The notation $F(T)=o(G(T))$ means $F(T)/G(T)\to0$ as $T\to\infty$, and $\mathcal C\Subset X$ means that $\mathcal C$ is relatively compact in the Banach space $X$.

For $f\in\dot H^1(\R^3)$ define
\begin{align*}
  \Dcal(f)
  &:=\int_{\R^3}(I_\alpha*|f|^p)(x)|f(x)|^p\dd x\\
  &=\iint_{\R^3\times\R^3}
  I_\alpha(x-y)|f(x)|^p|f(y)|^p\dd x\dd y.
\end{align*}
\begin{equation}\label{eq:energy}
  \Ecal(f):=\frac12\|\nabla f\|_2^2-\frac1{2p}\Dcal(f),
\end{equation}
and
\begin{equation}\label{eq:Kalpha}
  \Kcal(f):=\|\nabla f\|_2^2-\Dcal(f).
\end{equation}
The bilinear Hardy--Littlewood--Sobolev inequality and Sobolev embedding give
\begin{align}
  \Dcal(f)
  &\lesssim_\alpha
  \bigl\||f|^p\bigr\|_{L^{6/p}}^2
  =\|f\|_{L^6}^{2p}
  \lesssim_\alpha\|\nabla f\|_{L^2}^{2p}.
  \label{eq:D-finite}
\end{align}
Define the optimal Hartree--Sobolev constant by
\begin{equation*}
  C_{\rm HS}(\alpha)
  :=\sup_{0\ne f\in\dot H^1(\R^3)}
  \frac{\Dcal(f)}{\|\nabla f\|_2^{2p}}.
\end{equation*}
Let $W_\alpha\in\dot H^1(\R^3)$ be a positive optimizer attaining $C_{\rm HS}(\alpha)$, normalized so that it solves
\begin{equation}\label{eq:ground-state}
  -\Delta W_\alpha
  =(I_\alpha*|W_\alpha|^p)|W_\alpha|^{p-2}W_\alpha.
\end{equation}
Testing \eqref{eq:ground-state} against $W_\alpha$ and using the Pohozaev identity give
\begin{equation}\label{eq:ground-pohozaev}
  \|\nabla W_\alpha\|_2^2=\Dcal(W_\alpha),
  \qquad
  \Ecal(W_\alpha)=\frac{p-1}{2p}\|\nabla W_\alpha\|_2^2.
\end{equation}

Let $d$ denote the spatial dimension. The energy-critical Hartree theory with quadratic density $p=2$ begins when $d\ge5$.  Radial defocusing scattering, nonradial defocusing scattering, radial focusing threshold scattering, nonradial focusing scattering below the kinetic threshold, and the threshold dynamics were established in a sequence of works \cite{MiaoXuZhao2007,MiaoXuZhao2011,MiaoXuZhao2009,LiMiaoZhang2009,MiaoWuXu2015}.  For general powers, the local theory, sharp mass--energy dichotomies, radial scattering mechanisms, and threshold dynamics in the intercritical range were developed in \cite{Arora2019,AroraRoudenko2020,AroraRoudenko2022,Zhou2025}.

The generalized equation considered here differs from the classical theory in two essential respects: it lies in the three-dimensional energy-critical regime $p=3+\alpha>3$, and its homogeneous nonlinearity preserves translation invariance.  In the three-dimensional generalized energy-critical problem, radial threshold scattering is available, whereas the homogeneous nonradial problem is left open by the known theory \cite{GuzmanXu2023}.  The spatial weight in the inhomogeneous model penalizes an escaping concentration center; no analogous pinning is available for \eqref{eq:Hartree}.

For comparison with the homogeneous equation, let $b>0$ and write
\begin{equation*}
  p_b:=3+\alpha-2b.
\end{equation*}
The corresponding three-dimensional inhomogeneous generalized Hartree nonlinearity is
\[
  |x|^{-b}\bigl(I_\alpha*(|\cdot|^{-b}|u|^{p_b})\bigr)|u|^{p_b-2}u.
\]
In the two tables that follow, ``classical Hartree'' refers to the quadratic-density energy-critical model $p=2$ in spatial dimensions at least five, whereas ``generalized Hartree'' refers to the energy-critical exponent $p=1+(2+\alpha)/(d-2)$, which reduces to $p=3+\alpha$ when $d=3$.

\begin{table}[H]
\centering
\caption{Selected scattering milestones for the classical energy-critical Hartree equation.}
\label{tab:classical-scattering-progress}
\small
\setlength{\tabcolsep}{4pt}
\renewcommand{\arraystretch}{1.10}
\begin{tabularx}{\textwidth}{@{}L{0.24\textwidth}L{0.18\textwidth}Y@{}}
\toprule
Regime & Data & Main scattering result \\
\midrule
Defocusing \cite{MiaoXuZhao2007,MiaoXuZhao2011}
& Radial; general
& Global finite-energy scattering, with radial symmetry subsequently removed. \\
Focusing below threshold \cite{MiaoXuZhao2009,LiMiaoZhang2009}
& Radial; general
& Scattering on the subthreshold kinetic branch and the complementary blow-up alternative. \\
Threshold energy \cite{MiaoWuXu2015}
& Radial
& Classification of threshold dynamics under the stated spectral hypothesis. \\
\bottomrule
\end{tabularx}
\end{table}

\begin{table}[H]
\centering
\caption{Scattering results closest to the three-dimensional generalized problem.}
\label{tab:generalized-scattering-progress}
\small
\setlength{\tabcolsep}{4pt}
\renewcommand{\arraystretch}{1.10}
\begin{tabularx}{\textwidth}{@{}L{0.26\textwidth}L{0.18\textwidth}Y@{}}
\toprule
Model & Data & Main scattering result \\
\midrule
Defocusing generalized Hartree \cite{Saanouni2021}
& Radial
& Energy-critical global well-posedness and scattering via Morawetz control. \\
Three-dimensional inhomogeneous model, $b>0$ \cite{GuzmanXu2023}
& General
& Focusing scattering below the ground-state threshold; the homogeneous focusing result there is radial. \\
Present work, $d=3$ and $p=3+\alpha$
& Nonradial bounded-scale compact branch
& Fixed-center occupation-window and low-frequency scattering criteria. \\
\bottomrule
\end{tabularx}
\end{table}

Tables~\ref{tab:classical-scattering-progress} and~\ref{tab:generalized-scattering-progress} separate the quadratic-density theory, where nonradial threshold scattering is known, from the genuinely generalized three-dimensional focusing problem addressed here.  They also show why translation-invariant center motion, rather than the nonlocal convolution alone, is the principal obstruction in the homogeneous nonradial setting.

Our analysis is formulated within the concentration-compactness/rigidity framework.  Its profile-decomposition and minimal-element foundations originate in \cite{BahouriGerard1999,Keraani2001,KenigMerle2006}; the nonradial energy-critical Schr\"odinger developments in \cite{CollianderEtAl2008,KillipVisan2010,Dodson2019} show both the strength of the method and the significance of controlling spatial drift.

Our setting differs from the preceding Hartree scattering theory in three structural respects.  First, radial arguments fix the concentration center, while the inhomogeneous factor $|x|^{-b}$ in \cite{GuzmanXu2023} penalizes translation away from the origin.  Neither mechanism is available for the homogeneous nonradial equation \eqref{eq:Hartree}.  Second, in contrast with the quadratic-density energy-critical model treated in \cite{MiaoXuZhao2009,LiMiaoZhang2009,MiaoXuZhao2011}, the generalized nonlinearity with $p=3+\alpha>3$ produces genuinely two-point localization errors in the virial identity.  We control these errors at the critical regularity by combining the bilinear Hardy--Littlewood--Sobolev inequality with the uniform $L^6$ tail supplied by compactness.  Third, we do not impose differentiability, bounded speed, or a finite first moment on the modulation center $x(t)$.

The two main contributions are complementary.  The first is a fixed-center occupation-window method: instead of requiring pointwise control of $x(t)$ at every time, it allows a small exceptional set of times and measures the smallest fixed ball that captures the center for most of a long interval.  A normalized local-mass density modulus then improves the virial endpoint bound from $O(R^2)$ to $o(R^2)$ along diffusive radii.  The second is an infrared route: uniform low-frequency $L^2$ decay upgrades homogeneous compactness to $H^1$ precompactness, yields finite mass, permits a zero-momentum Galilean normalization, and recovers sublinear center drift from truncated barycenters.  Thus the two criteria isolate, respectively, a geometric occupation mechanism and a spectral low-frequency mechanism for excluding bounded-scale compact obstructions to scattering.

Throughout the paper the threshold assumptions are
\begin{equation}\label{eq:below-threshold}
  \Ecal(u_0)<\Ecal(W_\alpha),
  \qquad
  \|\nabla u_0\|_2<\|\nabla W_\alpha\|_2.
\end{equation}
They imply a uniform potential-well gap for every nonzero solution considered below.

For an interval $J\subset\R$, define the critical spacetime norm
\begin{equation}\label{eq:Z-norm-intro}
  \|u\|_{\Znorm(J)}
  :=\|u\|_{L_t^{2(p-1)}L_x^{\frac{6(p-1)}{p-3}}(J\times\R^3)}.
\end{equation}
A strong solution of \eqref{eq:Hartree} on $J$ is a function satisfying
\[
  u\in C(J;\dot H^1(\R^3)),
  \qquad
  \|u\|_{\Znorm(J)}<\infty,
\]
and, for every $t_0,t\in J$,
\begin{equation}\label{eq:Duhamel-intro}
  u(t)=e^{i(t-t_0)\Delta}u(t_0)
  +i\int_{t_0}^{t}e^{i(t-s)\Delta}
  \bigl[(I_\alpha*|u(s)|^p)|u(s)|^{p-2}u(s)\bigr]\dd s
\end{equation}
in $\dot H^1(\R^3)$.  Its maximal lifespan is denoted by $I_{\max}$.
We say that a maximal-lifespan solution satisfies the \emph{two-sided scattering conclusion} if
\begin{equation}\label{eq:global-and-finite-Z-intro}
  I_{\max}=\R,
  \qquad
  \|u\|_{\Znorm(\R)}<\infty,
\end{equation}
and there exist $u_\pm\in\dot H^1(\R^3)$ such that
\begin{equation}\label{eq:two-sided-scattering-intro}
  \lim_{t\to\pm\infty}
  \|u(t)-e^{it\Delta}u_\pm\|_{\dot H^1}=0.
\end{equation}
Equivalently, the scattering states in \eqref{eq:two-sided-scattering-intro} satisfy
\begin{equation}\label{eq:duhamel-scattering-intro}
  u(t)=e^{it\Delta}u_\pm
  -i\int_t^{\pm\infty}e^{i(t-s)\Delta}
  \bigl[(I_\alpha*|u(s)|^p)|u(s)|^{p-2}u(s)\bigr]\dd s
\end{equation}
in $\dot H^1(\R^3)$.

Let $U$ be a forward-global strong solution and write $U_0:=U(0)$. We call $U$ a \emph{bounded-scale compact critical element} if $U_0$ satisfies \eqref{eq:below-threshold} and there exist measurable functions $N:[0,\infty)\to(0,\infty)$ and $x:[0,\infty)\to\R^3$, together with constants $0<N_-\le N_+<\infty$, such that
\begin{equation}\label{eq:AP-intro}
  \left\{
  N(t)^{-1/2}U\left(t,x(t)+\frac{\cdot}{N(t)}\right):t\ge0
  \right\}
  \Subset\dot H^1(\R^3)
\end{equation}
and
\begin{equation}\label{eq:bounded-scale-intro}
  0<N_-\le N(t)\le N_+<\infty,
  \qquad t\ge0.
\end{equation}
Consequently,
\begin{equation}\label{eq:translated-orbit-intro}
  \mathcal K_x
  :=\{U(t,\cdot+x(t)):t\ge0\}
  \Subset\dot H^1(\R^3).
\end{equation}
No continuity or differentiability of $x(t)$ is assumed.

For $A\ge0$ and $T>0$ define
\begin{equation}\label{eq:mopt-def}
  \mopt_T(A)
  :=\inf_{Z\in\R^3}\frac1T
  \left|\{t\in[0,T]:|x(t)-Z|>A\}\right|,
\end{equation}
\begin{equation}\label{eq:Aopt-def}
  \Aopt_T(\delta)
  :=\inf\{A\ge0:\mopt_T(A)\le\delta\},
  \qquad 0<\delta<1,
\end{equation}
and
\begin{equation}\label{eq:z-Dx}
  z:=x(0),
  \qquad
  D_x(T):=\sup_{0\le t\le T}|x(t)-z|.
\end{equation}
For $2\le q\le6$, set
\begin{equation}\label{eq:theta}
  \theta(q):=3\left(\frac12-\frac1q\right),
  \qquad
  M_q(U):=\sup_{t\ge0}\|U(t)\|_{L^q}.
\end{equation}
The density modulus used at the virial endpoints is
\begin{equation}\label{eq:density-modulus-intro}
  \omega_U(R)
  :=R^{-2}\sup_{t\ge0}
  \int_{|y-x(t)|\le R}|U(t,y)|^2\dd y,
  \qquad R\ge1.
\end{equation}

For $\nu>0$, define the sharp low-frequency Fourier projection $P_{\le\nu}$ by
\[
  \widehat{P_{\le\nu}f}(\xi)
  :=\one_{\{|\xi|\le\nu\}}\widehat f(\xi).
\]
For $s>0$, define $|\nabla|^{-s}$ by
\[
  \widehat{|\nabla|^{-s}f}(\xi):=|\xi|^{-s}\widehat f(\xi)
\]
whenever the right-hand side defines an $L^2$ function. Whenever $f\in H^1(\R^3)$, define its mass and momentum components by
\begin{equation}\label{eq:mass-momentum-intro}
  \Mcal(f):=\int_{\R^3}|f(x)|^2\dd x,
  \qquad
  \Pcal_j(f):=\operatorname{Im}
  \int_{\R^3}\overline{f(x)}\,\partial_jf(x)\dd x,
  \quad j\in\{1,2,3\},
\end{equation}
and set
\begin{equation*}
  \Pcal(f):=\bigl(\Pcal_1(f),\Pcal_2(f),\Pcal_3(f)\bigr)\in\R^3.
\end{equation*}
For every $H^1$ solution $u$ of \eqref{eq:Hartree}, $\Mcal(u(t))$ and $\Pcal(u(t))$ are independent of $t$. We write $\Mcal(u)$ and $\Pcal(u)$ for these conserved values.

Let $\mathfrak A\subset\dot H^1(\R^3)$ be a class of initial data satisfying \eqref{eq:below-threshold}.  We say that $\mathfrak A$ admits a \emph{bounded-scale compactness reduction} if, whenever the maximal-lifespan solution associated with some $u_0\in\mathfrak A$ fails to satisfy \eqref{eq:global-and-finite-Z-intro}--\eqref{eq:two-sided-scattering-intro}, time reversal when necessary produces a nonzero forward-global bounded-scale compact critical element.

We now state the two scattering criteria. The first theorem gives a geometric scattering criterion: scattering follows when every bounded-scale compact obstruction is confined, for most times, to a fixed ball of at most diffusive radius, or when its maximal drift is quantitatively smaller than the virial scale dictated by an available $L^q$ bound.

\begin{theorem}[Fixed-center occupation-window and drift scattering criterion]\label{thm:window-scattering}
Let $\mathfrak A\subset\dot H^1(\R^3)$ admit a bounded-scale compactness reduction. Assume that every bounded-scale compact critical element $U$ furnished by this reduction satisfies at least one of the following alternatives.
\begin{enumerate}[label=\textup{(\roman*)},leftmargin=2.4em]
\item There exist sequences $T_n\to\infty$ and $\delta_n\downarrow0$ such that
\begin{equation}\label{eq:window-scattering-condition}
  \sup_{n\ge1}
  \frac{\Aopt_{T_n}(\delta_n)}{T_n^{1/2}}<\infty.
\end{equation}
\item For some $q\in[2,6]$, one has $M_q(U)<\infty$ and
\begin{equation}\label{eq:drift-scattering-condition}
  \liminf_{T\to\infty}
  \frac{\bigl(D_x(T)+1\bigr)^{1+\theta(q)}}{T}=0.
\end{equation}
\end{enumerate}
Then every $u_0\in\mathfrak A$ generates a maximal-lifespan solution satisfying the two-sided scattering conclusion \eqref{eq:global-and-finite-Z-intro}--\eqref{eq:two-sided-scattering-intro}.  In alternative \textup{(ii)}, the following explicit drift rates are sufficient:
\begin{equation}\label{eq:drift-rate-examples}
  \begin{aligned}
  q=6&:\quad D_x(T)=o(T^{1/2}),\\
  q=4&:\quad D_x(T)=o(T^{4/7}),\\
  q=2&:\quad D_x(T)=o(T).
  \end{aligned}
\end{equation}
\end{theorem}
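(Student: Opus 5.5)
By the definition of a bounded-scale compactness reduction, it suffices to show that no nonzero forward-global bounded-scale compact critical element $U$ can satisfy (i) or (ii). If no such element exists, every $u_0\in\mathfrak A$ satisfies \eqref{eq:global-and-finite-Z-intro}--\eqref{eq:two-sided-scattering-intro}. The argument is a localized virial rigidity argument. Fix a radial cutoff $\phi_R(x)=R^2\phi(x/R)$ with $\phi(x)=|x|^2$ for $|x|\le1$, $\phi$ constant for $|x|\ge2$, and a center $Z\in\R^3$. Define
\[
V_{R,Z}(t):=\operatorname{Im}\int\nabla\phi_R(x-Z)\cdot\nabla U\,\overline U\,dx .
\]

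\textbf{Lower bound on the virial derivative.} Differentiating gives
\[
V_{R,Z}'(t)=8\Kcal(U(t))+\Err_{R,Z}(t),
\]
where the error collects the exterior kinetic part, the bi-Laplacian term, and the two-point Hartree term. For the Hartree term one symmetrizes, writing $(x-y)\cdot(\nabla\phi_R(x)-\nabla\phi_R(y))$ against $|x-y|^{-(3-\alpha)}$. The coupled term vanishes when both $x,y$ lie in $B(Z,R)$. Otherwise at least one factor $|U|^p$ is restricted to $|x-Z|>R$, and bilinear HLS bounds the term by $\|U\|_{L^6(|x-Z|>R)}^{p}\|U\|_{L^6}^p$. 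The below-threshold condition \eqref{eq:below-threshold} gives the uniform gap $\Kcal(U(t))\gtrsim\|\nabla U(t)\|_2^2\gtrsim c_0>0$; the last inequality uses compactness \eqref{eq:translated-orbit-intro} and $U\ne0$. The compactness also gives uniform $\dot H^1$ and $L^6$ tail smallness outside $B(x(t),\rho)$ for $\rho$ large. Hence, whenever $|x(t)-Z|\le R-\rho_\eps$, we get $V'\ge 8c_0-\eps$. At all other times we only use $|V'|\lesssim1$.

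\textbf{Upper bound on the endpoints.} By Hölder, $|V_{R,Z}(t)|\lesssim R\|\nabla U\|_2\|U\|_{L^2(|x-Z|\le 2R)}$. In case (ii) with $M_q<\infty$, Hölder on a ball gives
\[
\|U\|_{L^2(B(Z,2R))}\lesssim R^{\theta(q)}M_q ,
\]
so $|V|\lesssim R^{1+\theta(q)}$. In case (i) we instead use the density modulus \eqref{eq:density-modulus-intro}. The key claim is $\omega_U(R)\to0$ as $R\to\infty$, giving $|V|\lesssim R^2\omega_U(3R)^{1/2}=o(R^2)$. The proof of this claim is Hardy's inequality, $\int_{|y-x(t)|\le R}|U|^2\le R^2\int|U|^2|y-x(t)|^{-2}\lesssim R^2\|\nabla U\|_2^2$, which shows $\omega_U$ is bounded. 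It is then upgraded to $o(1)$ by splitting $|y-x(t)|\le\rho$ from $\rho<|y-x(t)|\le R$ and using the compactness tail of $\nabla U$, i.e.\ a localized Hardy inequality for the tail. I expect this step to be the main obstacle: one must make the Hardy tail localization uniform in $t$, and if needed handle the centers $Z$ versus $x(t)$ by enlarging radii.

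\textbf{Conclusion.} In case (ii), take $Z=z$ and $R=D_x(T)+\rho_\eps+1$. Then every $t\in[0,T]$ is good, so
\[
c_0T\lesssim R^{1+\theta(q)}\lesssim (D_x(T)+1)^{1+\theta(q)},
\]
which contradicts \eqref{eq:drift-scattering-condition} along a subsequence. The examples \eqref{eq:drift-rate-examples} follow from $\theta(6)=1$, $\theta(4)=3/4$, $\theta(2)=0$. In case (i), choose $Z_n$ nearly realizing $\mopt_{T_n}(A_n)\le\delta_n$ with $A_n\le CT_n^{1/2}$, and set $R_n=KT_n^{1/2}$ with $K>C$ large. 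Integrating over $[0,T_n]$, with good times of measure at least $(1-\delta_n)T_n$ and bad times of measure at most $\delta_nT_n$, gives
\[
(8c_0-\eps)(1-\delta_n)T_n-C\delta_nT_n\le o(R_n^2)=o(K^2T_n).
\]
Dividing by $T_n$ and letting $n\to\infty$ yields a contradiction.
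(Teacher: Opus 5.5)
Your strategy is the paper's: a fixed-center truncated virial, good-time positivity from coercivity plus HLS control of the nonlocal tail, a radius-uniform rough bound at bad times, and endpoint bounds via $M_q$ in (ii) and via $\omega_U$ in (i). Case (ii) is correct as written. Two differences from the paper are harmless.
\begin{itemize}
\item You prove $\omega_U(R)\to0$ with an exterior Hardy inequality rather than H\"older against the $L^6$ tail. This works: in $\R^3$ one has $\int_{|y|>\rho}|f|^2|y|^{-2}\,dy\le4\int_{|y|>\rho}|\nabla f|^2\,dy$, because the boundary term has a favorable sign. It is routine, not the main obstacle.
\item You use $\delta_n\to0$ directly. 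The paper instead reduces, by monotonicity of $\Aopt_T(\delta)$ in $\delta$, to a fixed $\delta_0<\delta_*$.
\end{itemize}

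There is a real gap in case (i), at the endpoints. You integrate over $[0,T_n]$ and bound $|V_{R_n,Z_n}(0)|$ and $|V_{R_n,Z_n}(T_n)|$ by $R_n^2\omega_U(3R_n)^{1/2}$. That bound rests on $B(Z_n,2R_n)\subset B(x(t),3R_n)$, i.e. on $|x(t)-Z_n|\le R_n$. Hypothesis (i) gives this only for $t$ in the good set $G_n=\{t\in[0,T_n]:|x(t)-Z_n|\le A_n\}$. Nothing prevents $0$ or $T_n$ from lying in a bad excursion. Since no speed bound on $x(t)$ is assumed, $|x(t)-Z_n|$ can then be arbitrarily large compared with $R_n$. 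Enlarging radii does not cover that regime, and your Hardy argument centered at $x(t)$ produces the unbounded factor $(|x(t)-Z_n|+2R_n)^2$.

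There are two easy repairs.
\begin{itemize}
\item Choose $t_{1,n}\in[0,T_n/4]\cap G_n$ and $t_{2,n}\in[3T_n/4,T_n]\cap G_n$; this is possible once $|[0,T_n]\setminus G_n|<T_n/4$. Then integrate over $[t_{1,n},t_{2,n}]$, which still gives a lower bound $\gtrsim T_n$. This is what the paper does.
\item Alternatively, prove the center-free bound $\sup_{t\ge0,\,Z\in\R^3}R^{-2}\int_{B(Z,2R)}|U(t)|^2\,dy\to0$. Split $B(Z,2R)$ into $B(x(t),L)$, which carries mass $\lesssim L^2\|U\|_6^2$, and its complement, which carries mass $\lesssim R^2\bigl(\int_{|y-x(t)|\ge L}|U|^6\bigr)^{1/3}$ by H\"older. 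This is the paper's proof of the density-modulus lemma, and it needs no relation between $Z$ and $x(t)$.
\end{itemize}

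Two minor points. The bound $\|\nabla U(t)\|_2^2\ge c_0>0$ comes from energy conservation, $\|\nabla U(t)\|_2^2\ge2\Ecal(U_0)>0$; compactness and $U\ne0$ alone do not give it. Also, without the factor $2$ in your definition of $V_{R,Z}$, the main term is $4\Kcal(U)$ rather than $8\Kcal(U)$.
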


The preceding criterion uses direct spatial information on the modulation center.  The next theorem provides a complementary spectral entrance and requires no a priori drift estimate: decay of the very low frequencies creates the finite-mass compactness needed to derive the center motion from conservation laws.

\begin{theorem}[Low-frequency scattering criterion]\label{thm:lowfreq-scattering}
Let $\mathfrak A\subset\dot H^1(\R^3)$ admit a bounded-scale compactness reduction. Assume that every bounded-scale compact critical element $U$ furnished by this reduction satisfies
\begin{equation}\label{eq:lowfreq-main}
  \lim_{\nu\downarrow0}
  \sup_{t\ge0}\|P_{\le\nu}U(t)\|_2=0.
\end{equation}
Then every $u_0\in\mathfrak A$ generates a maximal-lifespan solution satisfying \eqref{eq:global-and-finite-Z-intro}--\eqref{eq:two-sided-scattering-intro}.  In particular, the same conclusion holds if every critical element furnished by the reduction obeys, for some $s=s(U)>0$,
\begin{equation}\label{eq:negative-regularity-entrance}
  \sup_{t\ge0}\||\nabla|^{-s}U(t)\|_2<\infty.
\end{equation}
\end{theorem}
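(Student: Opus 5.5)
The plan is to argue by contradiction inside the bounded-scale compactness reduction. If some $u_0\in\mathfrak A$ fails the two-sided scattering conclusion, the reduction supplies a nonzero forward-global bounded-scale compact critical element $U$ obeying \eqref{eq:lowfreq-main}. We will show $U\equiv0$. The strategy has four steps: upgrade compactness to finite mass and $H^1$ precompactness, normalize the momentum to zero by a Galilean boost, derive sublinear drift $D_x(T)=o(T)$, and feed this into alternative (ii) of Theorem~\ref{thm:window-scattering} with $q=2$.

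\textbf{Step 1 (finite mass and $H^1$ precompactness).} By \eqref{eq:bounded-scale-intro} and \eqref{eq:translated-orbit-intro}, $\mathcal K_x$ is precompact in $\dot H^1$. Split $f=P_{\le\nu}f+P_{>\nu}f$. Then
\[
  \|P_{>\nu}f\|_2\le\nu^{-1}\|\nabla f\|_2,
\]
and this is uniformly bounded on $\mathcal K_x$, while \eqref{eq:lowfreq-main} makes $\|P_{\le\nu}f\|_2$ uniformly small. Hence $\sup_t\|U(t)\|_2<\infty$ and $\Mcal(U)<\infty$.

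For precompactness in $L^2$ we need tightness in both frequency and space. Frequency tightness comes from \eqref{eq:lowfreq-main} at low frequencies and from $\dot H^1$ compactness at high frequencies. Spatial tightness comes from $\dot H^1$ precompactness, since its $L^6$ tails are uniformly small, combined with the uniform low-frequency control. Concretely, $\int_{|y|>R}|P_{\nu<|\xi|\le\mu}f|^2$ is controlled because $P_{\nu<|\xi|\le\mu}$ restricted to $\mathcal K_x$ is compact in $L^2$, being the image of a compact set under a bounded map $\dot H^1\to L^2$. This gives $\mathcal K_x\Subset H^1$.

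For the entrance \eqref{eq:negative-regularity-entrance}, observe that
\[
  \|P_{\le\nu}U\|_2\le\nu^{s}\||\nabla|^{-s}U\|_2,
\]
so \eqref{eq:lowfreq-main} follows directly.

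\textbf{Step 2 (zero momentum).} With finite mass, apply the Galilean transform
\[
  U\mapsto e^{ix\cdot\xi_0-it|\xi_0|^2}U(t,x-2t\xi_0),\qquad \xi_0=-\Pcal(U)/\Mcal(U).
\]
This preserves the equation and the $H^1$ compactness, with shifted center $x(t)-2t\xi_0$. By the standard computation it lowers the energy to
\[
  \Ecal-\frac{|\Pcal|^2}{2\Mcal},
\]
so it stays below threshold, and the kinetic threshold persists by the energy-trapping dichotomy. We may therefore assume $\Pcal(U)=0$.

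\textbf{Step 3 (sublinear drift).} For a cutoff $\phi_R(y)=R\,\phi(y/R)$ with $\phi(y)=y$ near the origin, define the truncated barycenter
\[
  X_R(t)=\int\phi_R|U|^2.
\]
Then
\[
  X_R'(t)=2\,\Pcal(U)+O\Bigl(\int_{|y|>R}|U||\nabla U|\Bigr)=O\Bigl(\int_{|y|>R}|U||\nabla U|\Bigr).
\]
The standard argument (Duyckaerts--Holmer--Roudenko / Kenig--Merle) then runs as follows: if $|x(t)|/t\not\to0$, pick times where $|x(t_n)|\ge\epsilon t_n$ and set $R\sim|x(t_n)|$. $H^1$ tightness makes the mass concentrate near $x(t)$, so $X_R$ moves by about $\Mcal\,R$ while $|X_R'|$ stays small. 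This is a contradiction, giving $D_x(T)=o(T)$.

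The main obstacle is this step without continuity of $x(t)$. To handle it, I would first replace $x(t)$ by a measurable center obtained from the compactness modulus, chosen so that $|x(t)|\le R_\epsilon+\sup_{s\le t}|x(s)|$ controls the supremum defining $D_x$, and run the argument on the first exit times of a ball. We also need $|x(t)|$ bounded on compact time intervals, which follows from continuity of $U$ in $H^1$.

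\textbf{Step 4 (conclusion).} Since $M_2(U)<\infty$ and $\theta(2)=0$, condition \eqref{eq:drift-scattering-condition} with $q=2$ reads $\liminf_T (D_x(T)+1)/T=0$, which is exactly the sublinear drift of Step 3. Theorem~\ref{thm:window-scattering}(ii) then rules out $U$. Note that the Galilean boost changed $x(t)$, so the theorem must be applied to the boosted critical element. This is legitimate provided the rigidity argument behind Theorem~\ref{thm:window-scattering} applies to any nonzero bounded-scale compact element below threshold, which is what its proof (the localized virial) uses.
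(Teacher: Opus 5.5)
Your proposal is correct and follows essentially the same route as the paper: finite mass from the low/high frequency split, $L^2$ (hence $H^1$) precompactness of $\mathcal K_x$ via the three-band decomposition, a Galilean normalization to zero momentum, sublinear drift from the truncated barycenter, and a $q=2$ finite-mass virial contradiction. Your Step~4 appeal to Proposition~\ref{prop:quantitative-drift}, applied to the boosted element, is the reduction the paper itself advertises in \eqref{eq:lowfreq-to-unified}, while Proposition~\ref{prop:zero-momentum-rigidity} runs the same virial argument directly. Two details: in Step~3 the discontinuity of $x(t)$ needs no first-exit times, since the paper takes $R=4(D_j(T)+L_\eta+1)$ proportional to the running supremum and compares the barycenter at $t=0$ with a near-maximizing time $t_{j,T}$, which keeps all of $[0,T]$ in the linear zone of the cutoff. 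Also, the boosted center is $x(t)+2t\xi_0$, not $x(t)-2t\xi_0$.
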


The two criteria can be placed under one virial-admissibility constraint.  Fix a compactness radius $L>0$ and, for $T,R>0$ and $Z\in\R^3$, define the exceptional-time set
\begin{equation}\label{eq:unified-bad-set}
  \mathcal B_U(T;Z,R,L)
  :=\bigl\{t\in[0,T]:|x(t)-Z|>R-L\bigr\}.
\end{equation}
The two endpoint estimates used in the paper are represented by
\begin{equation}\label{eq:unified-endpoint-costs}
  \mathfrak E_U^{\rm comp}(R)
  :=R^2\omega_U(3R)^{1/2},
  \qquad
  \mathfrak E_{U,q}^{L^q}(R)
  :=M_q(U)R^{1+\theta(q)},
  \quad 2\le q\le6.
\end{equation}
Thus both theorems reduce the exclusion of a bounded-scale compact critical element to finding sequences
\[
  T_n\to\infty,
  \qquad Z_n\in\R^3,
  \qquad R_n\ge2L,
\]
for which one of the endpoint costs in \eqref{eq:unified-endpoint-costs}, denoted by $\mathfrak E_U(R_n)$, satisfies the single asymptotic constraint
\begin{equation}\label{eq:unified-virial-constraint}
  \frac{|\mathcal B_U(T_n;Z_n,R_n,L)|}{T_n}
  +\frac{\mathfrak E_U(R_n)}{T_n}
  \longrightarrow0.
\end{equation}
Indeed, outside the set in \eqref{eq:unified-bad-set}, the fixed truncation centered at $Z_n$ contains the compact core, while the first term in \eqref{eq:unified-virial-constraint} absorbs the uniform loss on exceptional times and the second term absorbs the two virial endpoints.  The resulting positive bulk contribution is comparable to $T_n$, contradicting \eqref{eq:unified-virial-constraint}.

For Theorem~\ref{thm:window-scattering}\textup{(i)}, choose $Z_n$ and $R_n$ from the optimal occupation window, so that
\begin{align}\label{eq:window-to-unified}
  \frac{|\mathcal B_U(T_n;Z_n,R_n,L)|}{T_n}
  &\le\delta_n\longrightarrow0,\\
  \frac{\mathfrak E_U^{\rm comp}(R_n)}{T_n}
  &=\frac{R_n^2}{T_n}\,\omega_U(3R_n)^{1/2}
  \longrightarrow0,\notag
\end{align}
Thus \eqref{eq:window-to-unified} verifies the common constraint because $R_n=O(T_n^{1/2})$ and $\omega_U(R)\to0$.  For Theorem~\ref{thm:window-scattering}\textup{(ii)}, take
\begin{equation}\label{eq:drift-to-unified}
  Z_n:=x(0),
  \qquad
  R_n:=D_x(T_n)+L,
\end{equation}
so that $\mathcal B_U(T_n;Z_n,R_n,L)=\varnothing$ and
\[
  \frac{\mathfrak E_{U,q}^{L^q}(R_n)}{T_n}
  \lesssim
  M_q(U)\frac{(D_x(T_n)+1)^{1+\theta(q)}}{T_n}
  \longrightarrow0.
\]

Theorem~\ref{thm:lowfreq-scattering} reaches the same constraint without assuming center geometry.  Its hypothesis first yields
\begin{align}\label{eq:lowfreq-to-drift-summary}
  U&\in L_t^\infty H_x^1,
  &\{U(t,\cdot+x(t)):t\ge0\}&\Subset H^1,\notag\\
  \Pcal(U)&=0,
  &D_x(T)&=o(T).
\end{align}
after a Galilean normalization.  The implications summarized in \eqref{eq:lowfreq-to-drift-summary} are proved in Section~7.  Consequently, setting $q=2$ in \eqref{eq:unified-endpoint-costs} and using the choice in \eqref{eq:drift-to-unified} gives
\begin{equation}\label{eq:lowfreq-to-unified}
  \frac{\mathfrak E_{U,2}^{L^2}(R_n)}{T_n}
  =\Mcal(U)\frac{R_n}{T_n}
  \longrightarrow0,
  \qquad
  \mathcal B_U(T_n;Z_n,R_n,L)=\varnothing.
\end{equation}
Equation~\eqref{eq:lowfreq-to-unified} therefore shows that the low-frequency theorem supplies, rather than assumes, the $q=2$ realization of the common relation \eqref{eq:unified-virial-constraint}.  The distinction between the two results lies only in how that relation is obtained: Theorem~\ref{thm:window-scattering} enters through spatial occupation or quantitative drift, whereas Theorem~\ref{thm:lowfreq-scattering} enters through infrared decay and derives the required finite-mass drift structure.

The remainder is organized as follows. Section~2 develops local estimates and coercivity; Section~3 treats compactness tails and center geometry; Section~4 gives local-mass bounds; Section~5 derives the localized Hartree virial identity; Sections~6 and~7 prove the two exclusion mechanisms; and Section~8 completes the scattering deductions.

\section{Critical local estimates and variational coercivity}

All spatial norms in this section are taken over $\R^3$. A pair $(q,r)$ is called three-dimensional Schr\"odinger admissible if
\begin{equation}\label{eq:admissible}
  2\le q,r\le\infty,
  \qquad (q,r)\ne(2,\infty),
  \qquad \frac2q+\frac3r=\frac32.
\end{equation}
The endpoint Strichartz estimates used below are due to Keel and Tao \cite{KeelTao1998}. With admissibility defined by \eqref{eq:admissible}, for an interval $J\subset\R$ set
\begin{equation}\label{eq:S1-norm}
  \|u\|_{\mathcal S^1(J)}
  :=\sup_{(q,r)\text{ admissible}}
  \|\nabla u\|_{L_t^qL_x^r(J\times\R^3)}.
\end{equation}
For the norm in \eqref{eq:S1-norm}, the endpoint inhomogeneous Strichartz estimate gives, for $t_0\in J$ and every forcing term $F:J\times\R^3\to\C$ with finite right-hand side,
\begin{equation}\label{eq:inhom-Strichartz}
  \left\|\int_{t_0}^t e^{i(t-s)\Delta}F(s)\dd s\right\|_{\mathcal S^1(J)}
  \lesssim
  \|\nabla F\|_{L_t^2L_x^{6/5}(J\times\R^3)}.
\end{equation}
We abbreviate
\begin{equation}\label{eq:N-norm}
  \|F\|_{\Nnorm(J)}
  :=\|\nabla F\|_{L_t^2L_x^{6/5}(J\times\R^3)}.
\end{equation}

For later use, introduce the exponents
\begin{equation}\label{eq:critical-exponents}
  Q:=2(p-1),
  \qquad R_*:=\frac{6(p-1)}{p-3},
  \qquad r_*:=\frac{6(p-1)}{3p-5}.
\end{equation}
Define
\begin{equation}\label{eq:W-norm}
  \|u\|_{\Wnorm(J)}
  :=\|\nabla u\|_{L_t^Q L_x^{r_*}(J\times\R^3)}.
\end{equation}
Then $(Q,r_*)$ is admissible and
\begin{equation}\label{eq:critical-exponent-relations}
  \frac2Q+\frac3{r_*}=\frac32,
  \qquad
  \frac1{R_*}=\frac1{r_*}-\frac13,
  \qquad
  \frac2Q+\frac3{R_*}=\frac12.
\end{equation}
The exponents in \eqref{eq:critical-exponents} satisfy \eqref{eq:critical-exponent-relations}. Consequently, using \eqref{eq:Z-norm-intro} and \eqref{eq:W-norm},
\begin{equation}\label{eq:W-below-S}
  \|u\|_{\Wnorm(J)}\le \|u\|_{\mathcal S^1(J)},
  \qquad
  \|u\|_{\Znorm(J)}\lesssim \|u\|_{\Wnorm(J)}.
\end{equation}

The forcing norm used below is defined in \eqref{eq:N-norm}. Define the nonlinear mapping
\[
  \Ncal(u):=(I_\alpha*|u|^p)|u|^{p-2}u.
\]
A strong solution on an interval $J\subset\R$ is a function satisfying
\[
  u\in C(J;\dot H^1(\R^3)),
  \qquad
  \|u\|_{\Znorm(J)}<\infty,
\]
and, for every $t_0,t\in J$,
\[
  u(t)=e^{i(t-t_0)\Delta}u(t_0)
  +i\int_{t_0}^{t}e^{i(t-s)\Delta}\Ncal(u(s))\dd s
\]
in $\dot H^1(\R^3)$. The local theory may be constructed from the next estimate; see also the general critical-regularity theory in \cite{AroraRoudenko2020}. Only the finite-norm scattering consequence is required below.

\begin{lemma}[Critical generalized Hartree estimate]\label{lem:critical-Hartree-estimate}
Let $J\subset\R$, and let $u:J\times\R^3\to\C$ be a measurable function for which the right-hand side below is finite. For $0<\alpha<3$ and $p=3+\alpha$,
\begin{equation}\label{eq:critical-Hartree-estimate}
  \|\Ncal(u)\|_{\Nnorm(J)}
  \lesssim_\alpha
  \|u\|_{L_t^\infty L_x^6(J)}^p
  \|u\|_{\Znorm(J)}^{p-2}
  \|u\|_{\Wnorm(J)}.
\end{equation}
In particular, by Sobolev embedding and \eqref{eq:W-below-S},
\begin{equation}\label{eq:critical-Hartree-estimate-S}
  \|\Ncal(u)\|_{\Nnorm(J)}
  \lesssim_\alpha
  \|u\|_{L_t^\infty\dot H_x^1(J)}^p
  \|u\|_{\Znorm(J)}^{p-2}
  \|u\|_{\mathcal S^1(J)}.
\end{equation}
\end{lemma}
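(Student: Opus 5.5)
The plan is to differentiate the nonlinearity, then estimate each resulting term by H\"older in time, H\"older in space, and the Hardy--Littlewood--Sobolev inequality. The exponents will be distributed so that exactly $p$ factors of $u$ fall in $L_t^\infty L_x^6$, exactly $p-2$ factors fall in $L_t^{Q}L_x^{R_*}$ (the $\Znorm$ norm), and the single derivative falls in $L_t^QL_x^{r_*}$ (the $\Wnorm$ norm). Since $p=3+\alpha>3$, the map $z\mapsto|z|^{p-2}z$ is $C^1$ with derivative of size $|z|^{p-2}$, and $\nabla|u|^p=p|u|^{p-2}\operatorname{Re}(\bar u\nabla u)$. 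The chain rule therefore gives, pointwise,
\[
  |\nabla\Ncal(u)|
  \lesssim
  (I_\alpha*|u|^p)\,|u|^{p-2}|\nabla u|
  +\bigl(I_\alpha*(|u|^{p-1}|\nabla u|)\bigr)|u|^{p-1}.
\]
Both terms must be bounded in $L_t^2L_x^{6/5}$. The time exponents are automatic in every distribution below, because $(p-2)/Q+1/Q=(p-1)/Q=1/2$.

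\emph{First term.} By HLS, $\|I_\alpha*|u|^p\|_{L^a}\lesssim\||u|^p\|_{L^{6/p}}=\|u\|_{L^6}^p$ with $1/a=p/6-\alpha/3=(3-\alpha)/6\in(0,p/6)$. Next, $|u|^{p-2}\in L_x^{R_*/(p-2)}$ and $\nabla u\in L_x^{r_*}$. Using $\alpha=p-3$, the spatial exponents sum to
\[
  \frac{6-p}{6}+\frac{(p-2)(p-3)+(3p-5)}{6(p-1)}
  =\frac{6-p}{6}+\frac{p-1}{6}=\frac56 .
\]
H\"older in time then gives the bound $\|u\|_{L_t^\infty L_x^6}^p\|u\|_{\Znorm}^{p-2}\|u\|_{\Wnorm}$.

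\emph{Second term.} This term has the same total bookkeeping: $2p-2$ undifferentiated factors plus one gradient, of which $p$ go to $L^6$, $p-2$ go to $L^{R_*}$, and $\nabla u$ goes to $L^{r_*}$. The HLS loss is again $\alpha/3=(p-3)/3$, so the same algebra shows the total spatial exponent is $5/6$ for \emph{any} way of splitting these factors between the inside and the outside of the convolution. The only constraint is the admissibility condition of HLS: the inner exponent $1/b$ must satisfy $\alpha/3<1/b<1$, and every outer factor must have a legitimate Lebesgue exponent.

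The naive split puts $|u|^{p-1}$ in $L^6$ and $\nabla u$ in $L^{r_*}$ inside, with $|u|\in L^6$ and $|u|^{p-2}\in L^{R_*}$ outside. I expect this to fail near $p=6$: for example, at $p=5.9$ one gets $1/b>1$. The fix is a $p$-dependent split. Put $k$ copies of $|u|$ in $L^6$, $p-1-k$ copies in $L^{R_*}$, and $\nabla u$ in $L^{r_*}$ inside. Outside, put the complementary $p-k$ copies in $L^6$ and $k-1$ copies in $L^{R_*}$. Here $k$ (real, or via interpolation of powers) ranges over $[1,p-1]$. Because $1/R_*<1/6$, decreasing $k$ decreases $1/b$ continuously. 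The task is to check that some $k$ gives $1/b\in(\alpha/3,1)$ uniformly for $p\in(3,6)$.

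This choice of split is the main obstacle: it is the only genuinely critical-regularity point, and the verification is an explicit linear-in-$k$ inequality check. If a single $k$ cannot serve the full range of $p$, I would split the range $p\in(3,6)$ into subcases. After that, H\"older in time finishes the proof as before and yields \eqref{eq:critical-Hartree-estimate}.

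Finally, \eqref{eq:critical-Hartree-estimate-S} follows by Sobolev, $\|u\|_{L^6}\lesssim\|\nabla u\|_{L^2}$, together with $\|u\|_{\Wnorm}\le\|u\|_{\mathcal S^1}$ from \eqref{eq:W-below-S}.
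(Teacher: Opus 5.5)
Your proposal is the paper's argument: the same chain-rule bound, the same HLS map $L^{6/p}\to L^{6/(6-p)}$ and H\"older bookkeeping for the first term, and you are right that the naive split of the second term breaks down near $p=6$. The one step you left open is closed by the smallest value $k=1$ of your family (natural, since $1/b$ increases with $k$), and this is exactly the paper's choice. Putting one factor of $u$ in $L^6$, $|u|^{p-2}$ in $L^{R_*/(p-2)}$ and $\nabla u$ in $L^{r_*}$ inside the convolution gives $1/b=\tfrac16+\tfrac{p-1}{6}=\tfrac p6$. Since $\tfrac{p-3}{3}<\tfrac p6<1$ for every $p\in(3,6)$, HLS lands the inner term in $L^{6/(6-p)}$, the outer factor $|u|^{p-1}$ sits entirely in $L^{6/(p-1)}$, and no case split in $p$ is needed.
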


\begin{proof}
The pointwise chain rule gives
\begin{align}
  |\nabla\Ncal(u)|
  \lesssim_p{}&
  (I_\alpha*|u|^p)|u|^{p-2}|\nabla u|\notag\\
  &+\bigl(I_\alpha*(|u|^{p-1}|\nabla u|)\bigr)|u|^{p-1}.
  \label{eq:Hartree-gradient-split}
\end{align}
Since $p=3+\alpha$, the fractional integration identity
\begin{equation}\label{eq:HLS-mapping-relation}
  \frac{6-p}{6}=\frac p6-\frac\alpha3
\end{equation}
implies, by the Hardy--Littlewood--Sobolev inequality, that for every $f\in L_x^{6/p}(\R^3)$,
\begin{equation}\label{eq:HLS-map-critical}
  \|I_\alpha*f\|_{L_x^{6/(6-p)}}
  \lesssim_\alpha \|f\|_{L_x^{6/p}}.
\end{equation}
Here \eqref{eq:HLS-mapping-relation} gives precisely the exponent relation in \eqref{eq:HLS-map-critical}; the assumptions $3<p<6$ place both exponents in the Hardy--Littlewood--Sobolev range.

For the first term in \eqref{eq:Hartree-gradient-split}, note that
\begin{equation}\label{eq:outside-exponent-identity}
  \frac{p-2}{R_*}+\frac1{r_*}=\frac{p-1}{6}
\end{equation}
and use \eqref{eq:outside-exponent-identity} together with H\"older's inequality in time to obtain
\begin{align}
 &\|(I_\alpha*|u|^p)|u|^{p-2}\nabla u\|_{L_t^2L_x^{6/5}(J)}\notag\\
 &\quad\le
 \|I_\alpha*|u|^p\|_{L_t^\infty L_x^{6/(6-p)}(J)}
 \||u|^{p-2}\nabla u\|_{L_t^2L_x^{6/(p-1)}(J)}\notag\\
 &\quad\lesssim_\alpha
 \|u\|_{L_t^\infty L_x^6(J)}^p
 \|u\|_{L_t^Q L_x^{R_*}(J)}^{p-2}
 \|\nabla u\|_{L_t^Q L_x^{r_*}(J)}.
 \label{eq:first-gradient-term}
\end{align}
Indeed, $(p-2)/Q+1/Q=1/2$.

For the second term, one has
\begin{equation}\label{eq:inside-exponent-identity}
  \frac16+\frac{p-2}{R_*}+\frac1{r_*}=\frac p6.
\end{equation}
Using \eqref{eq:inside-exponent-identity},
\begin{align}
 &\||u|^{p-1}\nabla u\|_{L_t^2L_x^{6/p}(J)}\notag\\
 &\quad\le
 \|u\|_{L_t^\infty L_x^6(J)}
 \|u\|_{L_t^Q L_x^{R_*}(J)}^{p-2}
 \|\nabla u\|_{L_t^Q L_x^{r_*}(J)}.
 \label{eq:inside-convolution-bound}
\end{align}
Applying \eqref{eq:HLS-map-critical} to \eqref{eq:inside-convolution-bound} and using
\begin{equation*}
  \||u|^{p-1}\|_{L_t^\infty L_x^{6/(p-1)}}
  =\|u\|_{L_t^\infty L_x^6}^{p-1}
\end{equation*}
gives
\begin{align}
 &\|\bigl(I_\alpha*(|u|^{p-1}|\nabla u|)\bigr)|u|^{p-1}\|_{L_t^2L_x^{6/5}(J)}\notag\\
 &\quad\lesssim_\alpha
 \|u\|_{L_t^\infty L_x^6(J)}^p
 \|u\|_{\Znorm(J)}^{p-2}
 \|u\|_{\Wnorm(J)}.
 \label{eq:second-gradient-term}
\end{align}
Combining \eqref{eq:first-gradient-term} and \eqref{eq:second-gradient-term} proves \eqref{eq:critical-Hartree-estimate}; \eqref{eq:critical-Hartree-estimate-S} follows from \eqref{eq:W-below-S} and Sobolev embedding.
\end{proof}

The preceding nonlinear estimate yields the standard finite-critical-norm scattering implication.

\begin{proposition}[Finite critical norm implies scattering]\label{prop:finite-Z-scattering}
Let $u$ be a global strong solution of \eqref{eq:Hartree} on $[0,\infty)$ such that
\begin{equation}\label{eq:finite-Z-hypotheses}
  H:=\sup_{t\ge0}\|u(t)\|_{\dot H^1}<\infty,
  \qquad
  \|u\|_{\Znorm([0,\infty))}<\infty.
\end{equation}
Then there exists $u_+\in\dot H^1(\R^3)$ satisfying
\begin{equation}\label{eq:forward-scattering}
  \lim_{t\to\infty}
  \|u(t)-e^{it\Delta}u_+\|_{\dot H^1}=0.
\end{equation}
The analogous assertion holds backward in time.
\end{proposition}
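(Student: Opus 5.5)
The plan is the standard Cauchy argument in the dual Strichartz space, with the a priori control of $\|u\|_{\mathcal S^1}$ coming from splitting $[0,\infty)$ into finitely many intervals of small $\Znorm$-norm.

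\textbf{Step 1: finite Strichartz norm on the half-line.} Fix a small $\eta>0$, to be chosen in terms of $H$. Since $\|u\|_{\Znorm([0,\infty))}<\infty$, we partition $[0,\infty)$ into finitely many consecutive intervals $J_k=[t_k,t_{k+1})$, $k=0,\dots,K-1$, with $\|u\|_{\Znorm(J_k)}\le\eta$ on each. On $J_k$ we write the Duhamel formula based at $t_k$. The homogeneous Strichartz estimate bounds the free part in $\mathcal S^1(J_k)$ by $C\|u(t_k)\|_{\dot H^1}\le CH$. For the nonlinear part we combine \eqref{eq:inhom-Strichartz} with \eqref{eq:critical-Hartree-estimate-S}. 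This gives
\[
  \|u\|_{\mathcal S^1(J_k)}\le CH+CH^p\eta^{p-2}\|u\|_{\mathcal S^1(J_k)}.
\]
To absorb the last term we need $\|u\|_{\mathcal S^1(J_k)}<\infty$ a priori. We obtain this by a continuity (bootstrap) argument on subintervals $[t_k,t]$, using that the strong-solution framework from the local theory gives finiteness of $\mathcal S^1$ on compact subintervals. Alternatively, the truncation can be done directly in the norm $\Wnorm$, which suffices through \eqref{eq:critical-Hartree-estimate}. Choosing $\eta$ with $CH^p\eta^{p-2}\le\frac12$ yields $\|u\|_{\mathcal S^1(J_k)}\le 2CH$. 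Summing over the $K$ intervals gives $\|u\|_{\mathcal S^1([0,\infty))}<\infty$. Applying \eqref{eq:critical-Hartree-estimate-S} once more then gives $\|\Ncal(u)\|_{\Nnorm([0,\infty))}<\infty$.

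\textbf{Step 2: Cauchy criterion for the scattering state.} Define $v(t):=e^{-it\Delta}u(t)$. The Duhamel formula gives
\[
  v(t)-v(s)=i\int_s^t e^{-i\tau\Delta}\Ncal(u(\tau))\dd\tau .
\]
The dual homogeneous Strichartz estimate, which is the $L^\infty_t\dot H^1$ component of \eqref{eq:inhom-Strichartz}, then yields
\[
  \|v(t)-v(s)\|_{\dot H^1}\lesssim\|\Ncal(u)\|_{\Nnorm([s,t])}.
\]
Since the $L^2_t$-based norm is finite on $[0,\infty)$, the right-hand side tends to $0$ as $s,t\to\infty$ by dominated convergence. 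Hence $v(t)\to u_+$ in $\dot H^1$ for some $u_+\in\dot H^1(\R^3)$. Unitarity of $e^{it\Delta}$ on $\dot H^1$ turns this into \eqref{eq:forward-scattering}, with
\[
  u_+=u(0)+i\int_0^\infty e^{-i\tau\Delta}\Ncal(u(\tau))\dd\tau .
\]
The backward statement follows from the same argument run on $(-\infty,0]$, or equivalently by applying the forward argument to $\overline{u(-t)}$, which is again a solution.

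\textbf{Main obstacle.} The only delicate point is justifying the absorption in Step 1, namely the a priori finiteness of $\|u\|_{\mathcal S^1(J_k)}$ when $J_k$ is unbounded. We handle it by proving the bound on $[t_k,T]$ for every finite $T$, with the constant $2CH$ independent of $T$, and then letting $T\to\infty$ by monotone convergence. Finiteness on compact intervals follows from the local theory, since $\|u\|_{\Znorm}<\infty$ and $u\in C_t\dot H^1$, again via \eqref{eq:critical-Hartree-estimate}.
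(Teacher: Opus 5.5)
Your proposal is correct and follows the paper's proof essentially step for step: you split $[0,\infty)$ into finitely many intervals of small $\Znorm$-norm, absorb in $\mathcal S^1$ via \eqref{eq:inhom-Strichartz} and Lemma~\ref{lem:critical-Hartree-estimate}, and then run a Cauchy argument for $e^{-it\Delta}u(t)$ in $\dot H^1$. The only differences are minor. First, you make explicit the a priori finiteness of $\|u\|_{\mathcal S^1}$ on truncated intervals that the absorption needs; the paper uses this tacitly, and in both versions it rests on the local theory rather than on the nonlinear estimate itself. Second, you close the Cauchy step using the tail of the finite norm $\|\Ncal(u)\|_{\Nnorm([0,\infty))}$, whereas the paper uses the product bound with $\|u\|_{\Znorm([T_1,T_2])}\to0$.
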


\begin{proof}
Let $\eta>0$ be a parameter to be fixed below in terms of $\alpha$ and $H$. Under \eqref{eq:finite-Z-hypotheses}, absolute continuity of the $L_t^Q L_x^{R_*}$ norm yields an integer $K\ge1$ and consecutive intervals $J_1,\dots,J_K$ whose union is $[0,\infty)$ and such that
\begin{equation}\label{eq:Z-small-partition}
  \|u\|_{\Znorm(J_k)}\le\eta,\qquad 1\le k\le K.
\end{equation}
For each $k\in\{1,\dots,K\}$ choose $t_k\in J_k$. Using the partition property \eqref{eq:Z-small-partition}, Duhamel's formula, \eqref{eq:inhom-Strichartz}, and Lemma~\ref{lem:critical-Hartree-estimate}, fix a constant $C_*=C_*(\alpha)\ge1$ such that
\begin{align}
  \|u\|_{\mathcal S^1(J_k)}
  &\le C_*\|u(t_k)\|_{\dot H^1}
  +C_*\|\Ncal(u)\|_{\Nnorm(J_k)}\notag\\
  &\le C_*H+C_*H^p\eta^{p-2}
  \|u\|_{\mathcal S^1(J_k)}.
  \label{eq:S-absorption}
\end{align}
Choose $\eta$ so that $C_*H^p\eta^{p-2}\le1/2$ in \eqref{eq:S-absorption}. Then
\begin{equation}\label{eq:S-on-pieces}
  \|u\|_{\mathcal S^1(J_k)}\lesssim_{\alpha,H}1.
\end{equation}
Summing the bounds \eqref{eq:S-on-pieces} over the finite partition yields
\begin{equation}\label{eq:global-S-finite}
  \|u\|_{\mathcal S^1([0,\infty))}<\infty.
\end{equation}

For $0<T_1<T_2$, Duhamel's formula and \eqref{eq:inhom-Strichartz} imply
\begin{align}
 &\|e^{-iT_2\Delta}u(T_2)-e^{-iT_1\Delta}u(T_1)\|_{\dot H^1}\notag\\
 &\quad\lesssim
 \|\Ncal(u)\|_{\Nnorm([T_1,T_2])}\notag\\
 &\quad\lesssim_\alpha
 H^p\|u\|_{\Znorm([T_1,T_2])}^{p-2}
 \|u\|_{\mathcal S^1([T_1,T_2])}.
 \label{eq:scattering-Cauchy}
\end{align}
The right-hand side of \eqref{eq:scattering-Cauchy} tends to zero as $T_1,T_2\to\infty$, because the global $\Znorm$ norm is finite and \eqref{eq:global-S-finite} holds. Thus $e^{-it\Delta}u(t)$ is Cauchy in $\dot H^1$; denote its limit by $u_+$. Reversing Duhamel's formula proves \eqref{eq:forward-scattering}.
\end{proof}

We next record the sharp variational information. Put
\begin{equation}\label{eq:A-AW}
  A(f):=\|\nabla f\|_{L^2}^2,
  \qquad A_\alpha:=A(W_\alpha).
\end{equation}
With the notation in \eqref{eq:A-AW}, the sharp Hartree--Sobolev inequality normalized by $W_\alpha$ is
\begin{equation}\label{eq:sharp-HLS-Sobolev}
  \Dcal(f)\le A_\alpha^{1-p}A(f)^p.
\end{equation}
Indeed, equality holds at $W_\alpha$ by \eqref{eq:ground-pohozaev}. If
\begin{equation}\label{eq:y-def}
  y:=\frac{A(f)}{A_\alpha},
\end{equation}
define, for $y\ge0$,
\begin{equation*}
  g_p(y):=\frac12y-\frac1{2p}y^p.
\end{equation*}
Substituting \eqref{eq:y-def} into \eqref{eq:energy}, \eqref{eq:Kalpha}, and \eqref{eq:sharp-HLS-Sobolev} then gives
\begin{align}
  \Ecal(f)&\ge A_\alpha g_p(y),
  \label{eq:energy-lower-gp}\\
  \Kcal(f)&\ge A_\alpha(y-y^p).
  \label{eq:K-lower-y-Hartree}
\end{align}
Moreover,
\begin{equation}\label{eq:E-A-K}
  \Ecal(f)=\frac{p-1}{2p}A(f)+\frac1{2p}\Kcal(f).
\end{equation}

The variational inequalities above first isolate the subthreshold kinetic branch.

\begin{lemma}[Below-threshold branch]\label{lem:below-threshold-branch}
Let $u$ be a strong solution of \eqref{eq:Hartree} on an interval containing $0$, and assume \eqref{eq:below-threshold}. Then there exists $\delta_1=\delta_1(u_0,\alpha)>0$ such that throughout the lifespan,
\begin{equation}\label{eq:uniform-gradient-gap}
  A(u(t))\le(1-\delta_1)A_\alpha.
\end{equation}
Set
\begin{equation*}
  c_1:=1-(1-\delta_1)^{p-1}>0.
\end{equation*}
Then
\begin{equation}\label{eq:K-gradient-coercive}
  \Kcal(u(t))\ge c_1A(u(t)).
\end{equation}
In particular, the $\dot H^1$ norm stays uniformly bounded away from the ground-state threshold.
\end{lemma}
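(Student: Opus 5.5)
The plan is the classical Kenig--Merle potential-well argument, carried out with the scalar function $g_p$ and a continuity argument in time. Throughout, write $y(t):=A(u(t))/A_\alpha$.

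\textbf{Step 1: shape of $g_p$.} On $[0,\infty)$ we have $g_p'(y)=\tfrac12(1-y^{p-1})$. Hence $g_p$ increases strictly on $[0,1]$, decreases strictly on $[1,\infty)$, and has maximum $g_p(1)=\frac{p-1}{2p}$. By \eqref{eq:ground-pohozaev}, $A_\alpha g_p(1)=\Ecal(W_\alpha)$.

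\textbf{Step 2: the energy gap.} Energy is conserved on the lifespan of a strong solution. Since $\Ecal(u_0)<\Ecal(W_\alpha)$, we may choose $\delta_0>0$ with $\Ecal(u(t))=\Ecal(u_0)\le(1-\delta_0)\Ecal(W_\alpha)$ for all $t$. Combining this with \eqref{eq:energy-lower-gp} gives
\[
g_p(y(t))\le(1-\delta_0)g_p(1)\qquad\text{for all } t.
\]
By Step 1 there is a unique $\delta_1\in(0,1)$, depending only on $\delta_0$ and $p$ (hence on $u_0$ and $\alpha$), such that $g_p(1-\delta_1)=(1-\delta_0)g_p(1)$. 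The set $\{y\ge0: g_p(y)\le(1-\delta_0)g_p(1)\}$ is then $[0,1-\delta_1]\cup[y_+,\infty)$ for some $y_+>1$. These two components are disjoint closed intervals.

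\textbf{Step 3: trapping by continuity.} Since $u\in C(J;\dot H^1)$, the function $t\mapsto y(t)$ is continuous on the lifespan interval, which is connected. Its values lie in the disconnected set from Step 2, so they stay in one component. At $t=0$ we have $y(0)<1$ by the kinetic hypothesis in \eqref{eq:below-threshold}, so $y(0)\in[0,1-\delta_1]$. Therefore $y(t)\le 1-\delta_1$ for all $t$, which is \eqref{eq:uniform-gradient-gap}.

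\textbf{Step 4: coercivity.} Apply \eqref{eq:sharp-HLS-Sobolev} to $u(t)$:
\[
\Dcal(u(t))\le A_\alpha^{1-p}A(u(t))^{p}=y(t)^{p-1}A(u(t))\le(1-\delta_1)^{p-1}A(u(t)).
\]
Hence $\Kcal(u(t))=A(u(t))-\Dcal(u(t))\ge c_1A(u(t))$, which is \eqref{eq:K-gradient-coercive}. The closing remark of the lemma is just a restatement of \eqref{eq:uniform-gradient-gap}.

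\textbf{Main obstacle.} The only delicate point is that $\delta_1$ must be independent of $t$. This follows because conserved energy gives a fixed gap $\delta_0$ and $g_p$ is strictly monotone near $1$. The continuity of $t\mapsto\|\nabla u(t)\|_2$ is built into the definition of a strong solution, so no further justification is needed there.
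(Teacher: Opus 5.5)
Your proof is correct and follows essentially the paper's argument. Conserved energy and the lower bound $\Ecal\ge A_\alpha g_p(y)$ trap $y(t)$ below the unique preimage in $(0,1)$ of the energy level. Your connected-components continuity step is equivalent to the paper's first-crossing-time argument, and coercivity then comes from the sharp Hartree--Sobolev inequality. The only cosmetic difference is your auxiliary margin $\delta_0$: the paper defines $y_*$ directly by $g_p(y_*)=\Ecal(u_0)/A_\alpha$, which gives a slightly sharper $\delta_1$.
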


\begin{proof}
Set
\begin{equation*}
  y(t):=\frac{A(u(t))}{A_\alpha},
  \qquad
  e_0:=\frac{\Ecal(u_0)}{A_\alpha}.
\end{equation*}
By \eqref{eq:energy-lower-gp}, the initial inequalities imply
\begin{align*}
  0
  &< g_p(y(0))
  \le \frac{\Ecal(u_0)}{A_\alpha}
  =e_0
  <\frac{\Ecal(W_\alpha)}{A_\alpha}
  =g_p(1),
\end{align*}
where the strict positivity follows from $u_0\not\equiv0$; the zero solution is immediate.  Since
\begin{equation}\label{eq:proof-gp-derivative}
  g_p'(y)=\frac12(1-y^{p-1})>0,
  \qquad 0\le y<1,
\end{equation}
there is a unique $y_*\in(0,1)$ such that
\begin{equation}\label{eq:proof-ystar}
  g_p(y_*)=e_0,
  \qquad
  y(0)\le y_*<1,
  \qquad
  \delta_1:=1-y_*>0.
\end{equation}
Energy conservation and \eqref{eq:energy-lower-gp} yield, for every time in the lifespan,
\begin{equation}\label{eq:proof-gp-upper}
  g_p(y(t))
  \le \frac{\Ecal(u(t))}{A_\alpha}
  =\frac{\Ecal(u_0)}{A_\alpha}
  =e_0.
\end{equation}
We first verify that the orbit cannot cross the ground-state kinetic level.  If it did, continuity of $t\mapsto A(u(t))$ and $y(0)<1$ would provide a first time $t_1$ with $y(t_1)=1$.  Then
\begin{align*}
  g_p(1)
  &=g_p(y(t_1))
  \le \frac{\Ecal(u(t_1))}{A_\alpha}
  =e_0
  <g_p(1),
\end{align*}
which is impossible.  Hence $0\le y(t)<1$ throughout the lifespan.  On this interval $g_p$ is strictly increasing by \eqref{eq:proof-gp-derivative}; combining this fact with \eqref{eq:proof-gp-upper} and \eqref{eq:proof-ystar} gives
\begin{equation}\label{eq:proof-y-gap}
  y(t)\le y_*=1-\delta_1.
\end{equation}
This proves \eqref{eq:uniform-gradient-gap}.

Finally, using \eqref{eq:K-lower-y-Hartree} and \eqref{eq:proof-y-gap},
\begin{align*}
  \Kcal(u(t))
  &\ge A_\alpha\bigl(y(t)-y(t)^p\bigr)\notag\\
  &=A_\alpha y(t)\bigl(1-y(t)^{p-1}\bigr)\notag\\
  &\ge A_\alpha y(t)
  \bigl[1-(1-\delta_1)^{p-1}\bigr]\notag\\
  &=\bigl[1-(1-\delta_1)^{p-1}\bigr]A(u(t))
  =c_1A(u(t)),
\end{align*}
which is \eqref{eq:K-gradient-coercive}.
\end{proof}

The branch coercivity can be strengthened to a strictly positive time-uniform virial gap for every nonzero solution.

\begin{lemma}[Positive virial gap]\label{lem:positive-virial-gap}
Under the assumptions of Lemma~\ref{lem:below-threshold-branch}, if $u\not\equiv0$, then
\begin{equation}\label{eq:kappa-def}
  \kappa_0:=2c_1\Ecal(u_0)>0
\end{equation}
and
\begin{equation}\label{eq:K-positive-gap}
  \Kcal(u(t))\ge\kappa_0
\end{equation}
throughout the lifespan.
\end{lemma}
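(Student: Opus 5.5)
We combine the coercivity bound \eqref{eq:K-gradient-coercive} of Lemma~\ref{lem:below-threshold-branch} with the identity \eqref{eq:E-A-K} and energy conservation. The target inequality is $\Kcal(u(t))\ge 2c_1\Ecal(u_0)$. Since $\Kcal(u(t))\ge c_1A(u(t))$, it suffices to prove the lower bound $A(u(t))\ge 2\Ecal(u(t))=2\Ecal(u_0)$ throughout the lifespan. This follows at once from the definition \eqref{eq:energy}: because $\Dcal(f)\ge0$ (the kernel $I_\alpha$ and the densities $|f|^p$ are nonnegative), we have $\Ecal(f)\le\frac12\|\nabla f\|_2^2=\frac12A(f)$. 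Applying this at time $t$ and using conservation of energy gives
\begin{equation*}
  \Kcal(u(t))\ge c_1A(u(t))\ge 2c_1\Ecal(u(t))=2c_1\Ecal(u_0)=\kappa_0 .
\end{equation*}

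It remains to check that $\kappa_0>0$, i.e.\ $\Ecal(u_0)>0$ whenever $u\not\equiv0$. We have $c_1>0$ from Lemma~\ref{lem:below-threshold-branch}. For the energy, the proof of that lemma already records $0<g_p(y(0))\le \Ecal(u_0)/A_\alpha$ via \eqref{eq:energy-lower-gp}. Indeed, $u_0\ne0$ gives $0<y(0)<1$, and then $g_p(y(0))=\tfrac12 y(0)\bigl(1-y(0)^{p-1}/p\bigr)>0$. Alternatively, \eqref{eq:E-A-K} together with $\Kcal(u_0)\ge c_1A(u_0)\ge0$ gives $\Ecal(u_0)\ge\frac{p-1}{2p}A(u_0)>0$.

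The one point requiring care is the nondegeneracy $u_0\ne0$. Uniqueness of strong solutions means that $u\not\equiv0$ forces $u_0\ne0$, and hence $A(u_0)>0$. I do not expect any genuine obstacle: the argument uses only the sign of $\Dcal$, energy conservation for strong solutions, and the previous lemma.
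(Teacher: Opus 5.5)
Your proposal is correct and follows the paper's proof essentially verbatim: you chain $\Kcal(u(t))\ge c_1A(u(t))$ with $A(u(t))\ge 2\Ecal(u_0)$, which comes from $\Dcal\ge0$ and energy conservation. You then get $\Ecal(u_0)>0$ either from $g_p>0$ on $(0,1)$ or from \eqref{eq:E-A-K}, which are exactly the two justifications the paper cites.
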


\begin{proof}
For $0<y<1$, $g_p(y)>0$. Equivalently, \eqref{eq:E-A-K} and \eqref{eq:K-gradient-coercive} imply $\Ecal(u_0)>0$ for a nontrivial solution. Since $\Dcal(u(t))\ge0$, energy conservation gives
\begin{equation}\label{eq:A-lower-energy}
  \Ecal(u_0)=\Ecal(u(t))\le\frac12A(u(t)),
  \qquad A(u(t))\ge2\Ecal(u_0).
\end{equation}
Combining \eqref{eq:K-gradient-coercive} with \eqref{eq:A-lower-energy} proves \eqref{eq:K-positive-gap}.
\end{proof}

\section{Critical compactness, nonlocal tails, and drift geometry}

Assume throughout this section that $U$, $N$, and $x$ satisfy \eqref{eq:AP-intro} and \eqref{eq:bounded-scale-intro}. Define the normalized orbit
\begin{equation}\label{eq:normalized-orbit}
  v(t,y):=N(t)^{-1/2}U\left(t,x(t)+\frac{y}{N(t)}\right)
\end{equation}
and its closure
\begin{equation}\label{eq:compact-K}
  \mathcal K:=\overline{\{v(t):t\ge0\}}^{\dot H^1}.
\end{equation}
By \eqref{eq:compact-K} and the almost-periodicity assumption, $\mathcal K$ is compact in $\dot H^1(\R^3)$.

We begin the compactness analysis by removing the bounded scale parameter from the normalized orbit.

\begin{lemma}[Translated-orbit compactness]\label{lem:translated-orbit-compact}
Under \eqref{eq:AP-intro} and \eqref{eq:bounded-scale-intro}, the translated orbit $\mathcal K_x$ defined in \eqref{eq:translated-orbit-intro} is precompact in $\dot H^1(\R^3)$.
\end{lemma}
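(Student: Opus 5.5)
The plan is to write the translated orbit as the image of the compact set $\mathcal K$ times the compact scale interval $[N_-,N_+]$ under a continuous map, and conclude that it lies in a compact set. Define the rescaling operator
\[
  \Phi:[N_-,N_+]\times\dot H^1(\R^3)\to\dot H^1(\R^3),
  \qquad
  \Phi(\lambda,f)(y):=\lambda^{1/2}f(\lambda y).
\]
From \eqref{eq:normalized-orbit} we have $v(t,y)=N(t)^{-1/2}U(t,x(t)+y/N(t))$. Inverting this gives $U(t,x(t)+z)=N(t)^{1/2}v(t,N(t)z)$, that is,
\[
  U(t,\cdot+x(t))=\Phi\bigl(N(t),v(t)\bigr).
\]
Hence $\mathcal K_x\subset\Phi\bigl([N_-,N_+]\times\mathcal K\bigr)$, where $\mathcal K$ is the compact closure \eqref{eq:compact-K} and $N(t)\in[N_-,N_+]$ by \eqref{eq:bounded-scale-intro}.

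The only real step is joint continuity of $\Phi$. This is the point where the bounded-scale assumption is used: without a lower bound on $N$, the map would not extend continuously at $\lambda=0$.

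First, $\Phi(\lambda,\cdot)$ is a linear isometry of $\dot H^1$ for every $\lambda>0$. Indeed, $\nabla\Phi(\lambda,f)(y)=\lambda^{3/2}(\nabla f)(\lambda y)$, and the change of variables $z=\lambda y$ shows that the $L^2$ norms agree.

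Second, for fixed $f$, the map $\lambda\mapsto\Phi(\lambda,f)$ is continuous into $\dot H^1$. For $f\in C_c^\infty$ this follows from dominated convergence applied to $\lambda^{3/2}(\nabla f)(\lambda y)$, using uniform continuity and the uniform compact support available for $\lambda$ in a compact subset of $(0,\infty)$. A density argument together with the isometry property extends this to all $f\in\dot H^1$.

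Joint continuity then follows from the splitting
\[
  \|\Phi(\lambda,f)-\Phi(\lambda_0,f_0)\|_{\dot H^1}
  \le\|f-f_0\|_{\dot H^1}+\|\Phi(\lambda,f_0)-\Phi(\lambda_0,f_0)\|_{\dot H^1}.
\]

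Since $[N_-,N_+]\times\mathcal K$ is compact and $\Phi$ is continuous, its image is compact in $\dot H^1(\R^3)$. The set $\mathcal K_x$ is contained in this image, so it is precompact.

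One could instead argue with sequences: given $t_n$, extract a subsequence along which $N(t_n)\to\lambda_0\in[N_-,N_+]$ and $v(t_n)\to f_0$ in $\dot H^1$. The same estimate then shows that $U(t_n,\cdot+x(t_n))$ converges to $\Phi(\lambda_0,f_0)$. No measurability or continuity of $x(t)$ or $N(t)$ is needed in either version.
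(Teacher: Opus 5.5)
Your proposal is correct and is the same argument as the paper's: write $U(t,\cdot+x(t))=\mathcal S_{N(t)}v(t)$, use continuity of $(\lambda,f)\mapsto\lambda^{1/2}f(\lambda\,\cdot)$ on $(0,\infty)\times\dot H^1$, and conclude that $\mathcal K_x$ lies inside the compact image of $[N_-,N_+]\times\mathcal K$. Your isometry-plus-density argument supplies the joint continuity that the paper simply asserts; one small correction to your side remark is that $N_+<\infty$ is used just as much as $N_-$, since both bounds are needed for $[N_-,N_+]$ to be a compact subset of $(0,\infty)$ (concentration as $\lambda\to\infty$ also destroys precompactness).
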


\begin{proof}
For $\lambda>0$ define $\mathcal S_\lambda f(x):=\lambda^{1/2}f(\lambda x)$. From \eqref{eq:normalized-orbit},
\[
  U(t,x+x(t))=\mathcal S_{N(t)}v(t,x).
\]
The map $(\lambda,f)\mapsto\mathcal S_\lambda f$ is continuous from $(0,\infty)\times\dot H^1$ to $\dot H^1$. Hence the image of the compact set $[N_-,N_+]\times\mathcal K$ is compact and contains $\mathcal K_x$.
\end{proof}

The next elementary compactness statement converts precompactness into uniform spatial and frequency tails.

\begin{lemma}[Uniform tails of a compact set]\label{lem:compact-set-tails}
If $\mathcal C\Subset\dot H^1(\R^3)$, then
\begin{equation}\label{eq:compact-set-tail-limit}
  \lim_{A\to\infty}
  \sup_{f\in\mathcal C}
  \int_{|x|\ge A}\bigl(|\nabla f(x)|^2+|f(x)|^6\bigr)\dd x=0.
\end{equation}
\end{lemma}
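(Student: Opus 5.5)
The plan is the standard $\eps/2$-net argument, using that both integrands define continuous functionals on $\dot H^1$. For $f\in\dot H^1(\R^3)$ and $A>0$ set
\[
  \Ttail{A}(f):=\int_{|x|\ge A}\bigl(|\nabla f|^2+|f|^6\bigr)\dd x .
\]
First I treat a single function. Since $\nabla f\in L^2$, and $f\in L^6$ by Sobolev embedding, dominated convergence gives $\Ttail{A}(f)\to0$ as $A\to\infty$. By the same argument this holds for any finite family.

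Second, I need a uniform continuity estimate on the tail functional. For $f,g\in\dot H^1$, the triangle inequality in $L^2(\{|x|\ge A\})$ and $L^6(\{|x|\ge A\})$ gives
\[
  \|\nabla f\|_{L^2(|x|\ge A)}\le\|\nabla g\|_{L^2(|x|\ge A)}+\|\nabla(f-g)\|_{L^2}.
\]
Combining this with Sobolev,
\[
  \|f\|_{L^6(|x|\ge A)}\le\|g\|_{L^6(|x|\ge A)}+C\|\nabla(f-g)\|_{L^2}.
\]
So the square root of the gradient part and the sixth root of the $L^6$ part are each $1$-Lipschitz (up to the Sobolev constant) in the $\dot H^1$ distance, uniformly in $A$. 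I would work with these root quantities rather than $\Ttail{A}$ itself to avoid bookkeeping with powers.

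Third, fix $\eps>0$. Precompactness of $\mathcal C$ gives a finite $\eta$-net $g_1,\dots,g_m\in\dot H^1$. By the first step, choose $A_0$ with $\Ttail{A}(g_j)<\eta^{6}$ for every $j$ and every $A\ge A_0$. For $f\in\mathcal C$, pick $g_j$ with $\|\nabla(f-g_j)\|_2<\eta$. The second step then yields
\[
  \|\nabla f\|_{L^2(|x|\ge A)}\lesssim\eta,
  \qquad
  \|f\|_{L^6(|x|\ge A)}\lesssim\eta
\]
for all $A\ge A_0$ (using $\eta\le1$). Hence $\sup_{f\in\mathcal C}\Ttail{A}(f)\lesssim\eta^2+\eta^6$, which is below $\eps$ once $\eta$ is chosen small. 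This proves \eqref{eq:compact-set-tail-limit}.

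There is no real obstacle here. The only point needing care is that the $L^6$ tail must be controlled by the $\dot H^1$ distance through global Sobolev embedding, since no mass bound is available. That is why the net is measured in $\dot H^1$ and the triangle inequality is applied to the restricted $L^6$ norms.
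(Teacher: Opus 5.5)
Your proposal is correct and follows essentially the same finite-net argument as the paper: take a finite $\dot H^1$-net, control the $L^6$ discrepancy through the global Sobolev embedding, and choose one radius that works for the finitely many net points. The only difference is cosmetic. You apply the triangle inequality to the restricted $L^2$ and $L^6$ norms, whereas the paper uses the pointwise bounds $(a+b)^2\le 2a^2+2b^2$ and $(a+b)^6\le 2^5(a^6+b^6)$ on the integrands.
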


\begin{proof}
Fix $\varepsilon>0$. Since $\mathcal C$ is compact, there exist an integer $J=J(\varepsilon)\ge1$ and functions
\begin{equation*}
  f_1,\dots,f_J\in\mathcal C
\end{equation*}
such that for every $f\in\mathcal C$ one can choose $j=j(f)\in\{1,\dots,J\}$ with
\begin{equation*}
  \|\nabla(f-f_j)\|_2
  \le\varepsilon,
  \qquad
  \|f-f_j\|_6\lesssim\varepsilon.
\end{equation*}
For this finite family choose $A_\varepsilon$ so large that
\begin{equation*}
  \max_{1\le j\le J}
  \int_{|x|\ge A_\varepsilon}
  \bigl(|\nabla f_j|^2+|f_j|^6\bigr)\dd x
  \le\varepsilon^2+\varepsilon^6.
\end{equation*}
Then, for $A\ge A_\varepsilon$ and $f\in\mathcal C$,
\begin{align*}
 &\int_{|x|\ge A}
  \bigl(|\nabla f|^2+|f|^6\bigr)\dd x\notag\\
 &\quad\le
  2\|\nabla(f-f_j)\|_2^2
  +2\int_{|x|\ge A}|\nabla f_j|^2\dd x\notag\\
 &\qquad
  +2^5\|f-f_j\|_6^6
  +2^5\int_{|x|\ge A}|f_j|^6\dd x\notag\\
 &\quad\le C\bigl(\varepsilon^2+\varepsilon^6\bigr).
\end{align*}
Taking the supremum over $f\in\mathcal C$ and then sending $\varepsilon\downarrow0$ proves \eqref{eq:compact-set-tail-limit}.
\end{proof}

Applying the preceding compact-set statement to the translated orbit gives the physical tails used throughout the virial argument.

\begin{lemma}[Physical kinetic and critical tails]\label{lem:physical-tails}
For every $\eps>0$ there exists $A_{\rm c}=A_{\rm c}(\eps)\ge1$ such that
\begin{equation}\label{eq:physical-tail-small}
  \sup_{t\ge0}
  \int_{|x-x(t)|\ge A_{\rm c}}
  \bigl(|\nabla U(t,x)|^2+|U(t,x)|^6\bigr)\dd x
  \le\eps.
\end{equation}
Equivalently, the tail modulus
\begin{equation}\label{eq:omega-def}
  \omega_{\rm c}(A)
  :=\sup_{t\ge0}
  \int_{|x-x(t)|\ge A}
  \bigl(|\nabla U(t,x)|^2+|U(t,x)|^6\bigr)\dd x
\end{equation}
satisfies $\omega_{\rm c}(A)\to0$ as $A\to\infty$.
\end{lemma}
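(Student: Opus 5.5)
The plan is to apply Lemma~\ref{lem:compact-set-tails} to the translated orbit $\mathcal C:=\mathcal K_x$ from \eqref{eq:translated-orbit-intro}. Lemma~\ref{lem:translated-orbit-compact} shows that this set is precompact in $\dot H^1(\R^3)$, and that is exactly the hypothesis Lemma~\ref{lem:compact-set-tails} needs. (If one reads Lemma~\ref{lem:compact-set-tails} strictly for compact sets, apply it to the closure $\overline{\mathcal K_x}$, which is compact and contains every element of $\mathcal K_x$.) Lemma~\ref{lem:compact-set-tails} then gives
\[
  \lim_{A\to\infty}\sup_{t\ge0}\int_{|y|\ge A}\bigl(|\nabla U(t,y+x(t))|^2+|U(t,y+x(t))|^6\bigr)\dd y=0 .
\]

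Next I will undo the translation. For each fixed $t$, the change of variables $x=y+x(t)$ has unit Jacobian and commutes with $\nabla$. It therefore turns this integral into $\int_{|x-x(t)|\ge A}(|\nabla U(t,x)|^2+|U(t,x)|^6)\dd x$, so the supremum above is exactly $\omega_{\rm c}(A)$ from \eqref{eq:omega-def}. Hence $\omega_{\rm c}(A)\to0$ as $A\to\infty$.

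The quantitative form \eqref{eq:physical-tail-small} follows directly. Given $\eps>0$, pick $A_{\rm c}(\eps)$ with $\omega_{\rm c}(A)\le\eps$ for all $A\ge A_{\rm c}$, and enlarge it to be at least $1$. Conversely, \eqref{eq:physical-tail-small} for every $\eps$ gives $\omega_{\rm c}(A)\to0$, because $\omega_{\rm c}$ is nonincreasing in $A$. So the two formulations are equivalent.

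I do not expect a real obstacle. The only subtlety is that $x(t)$ is merely measurable, but no regularity in $t$ is used: the argument works time by time, and the uniformity comes entirely from the precompactness of $\mathcal K_x$.
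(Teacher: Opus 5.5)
Your proof is correct. It differs from the paper's only in which compact set you feed into Lemma~\ref{lem:compact-set-tails}.

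The paper applies that lemma to the normalized orbit $\{v(t)\}$ of \eqref{eq:normalized-orbit} and obtains a normalized tail radius $L$. It then passes to physical variables through the dilation $y=N(t)(x-x(t))$, using that $\|\nabla\cdot\|_2^2$ and $\|\cdot\|_6^6$ are invariant under the $\dot H^1$-critical scaling, and sets $A_{\rm c}=\max\{1,L/N_-\}$.

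You instead apply the lemma to $\mathcal K_x$. Its precompactness (Lemma~\ref{lem:translated-orbit-compact}) already absorbs the scale parameter through compactness of $[N_-,N_+]\times\mathcal K$, so you only have to undo a translation.

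What each route buys:
\begin{itemize}
\item Your route is shorter and reuses a lemma already proved.
\item The paper's route makes explicit that only the lower bound $N(t)\ge N_-$ is needed for physical tails; the upper bound $N_+$ plays no role. It also yields the explicit radius $L/N_-$.
\end{itemize}

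Your derivation of the equivalence of the two formulations from the monotonicity of $\omega_{\rm c}$ is fine. So is your remark that measurability of $x(t)$ suffices.
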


\begin{proof}
Let $\varepsilon>0$.  By Lemma~\ref{lem:compact-set-tails}, there is $L=L(\varepsilon)\ge1$ such that
\begin{equation}\label{eq:normalized-tail-choice}
  \sup_{t\ge0}
  \int_{|y|\ge L}
  \bigl(|\nabla v(t,y)|^2+|v(t,y)|^6\bigr)\dd y
  \le\varepsilon.
\end{equation}
Set
\begin{equation*}
  A_{\rm c}:=\max\left\{1,\frac{L}{N_-}\right\}.
\end{equation*}
For $x=x(t)+y/N(t)$, one has
\begin{equation*}
  |x-x(t)|\ge A_{\rm c}
  \quad\Longrightarrow\quad
  |y|=N(t)|x-x(t)|\ge N_-A_{\rm c}\ge L.
\end{equation*}
The critical scaling gives, term by term,
\begin{align}
 &\int_{|x-x(t)|\ge A_{\rm c}}|\nabla U(t,x)|^2\dd x\notag\\
 &\qquad=
  \int_{|y|\ge N(t)A_{\rm c}}|\nabla v(t,y)|^2\dd y
  \le\int_{|y|\ge L}|\nabla v(t,y)|^2\dd y,
  \label{eq:physical-gradient-change}\\
 &\int_{|x-x(t)|\ge A_{\rm c}}|U(t,x)|^6\dd x\notag\\
 &\qquad=
  \int_{|y|\ge N(t)A_{\rm c}}|v(t,y)|^6\dd y
  \le\int_{|y|\ge L}|v(t,y)|^6\dd y.
  \label{eq:physical-L6-change}
\end{align}
Adding \eqref{eq:physical-gradient-change} and \eqref{eq:physical-L6-change}, taking the supremum in $t$, and invoking \eqref{eq:normalized-tail-choice} proves \eqref{eq:physical-tail-small}.  Since $\varepsilon$ is arbitrary, \eqref{eq:omega-def} tends to zero.
\end{proof}

The next estimate is the nonlocal substitute for the elementary fact that a local power nonlinearity has no interaction between two disjoint regions. For $f\in\dot H^1(\R^3)$, $z\in\R^3$, and $R>0$, define
\begin{align}\label{eq:Hartree-tail-definition}
  \Ttail{R}[f;z]
  :=\iint_{\substack{|x-z|\ge R\ \text{or}\ |y-z|\ge R}}
  I_\alpha(x-y)|f(x)|^p|f(y)|^p\dd x\dd y.
\end{align}

The genuinely nonlocal core--far-field interaction is controlled by the same critical $L^6$ tail, as follows.

\begin{lemma}[Critical HLS control of the Hartree tail]\label{lem:Hartree-tail-HLS}
For the tail functional defined in \eqref{eq:Hartree-tail-definition} and every $f\in\dot H^1(\R^3)$, $z\in\R^3$, and $R>0$,
\begin{equation}\label{eq:Hartree-tail-HLS}
  \Ttail{R}[f;z]
  \lesssim_\alpha
  \|f\|_{L^6}^{p}
  \|f\one_{\{|x-z|\ge R\}}\|_{L^6}^{p}.
\end{equation}
Consequently, under the compactness assumptions,
\begin{equation}\label{eq:moving-Hartree-tail-small}
  \lim_{A\to\infty}
  \sup_{t\ge0}\Ttail{A}[U(t);x(t)]=0.
\end{equation}
\end{lemma}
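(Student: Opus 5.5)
The plan is to bound the tail integral by splitting the domain of integration and applying the bilinear Hardy--Littlewood--Sobolev inequality in its critical form \eqref{eq:HLS-map-critical}. Write $f_{\rm far}:=f\one_{\{|x-z|\ge R\}}$. The region $\{|x-z|\ge R\}\cup\{|y-z|\ge R\}$ is contained in the union of $\{|x-z|\ge R\}\times\R^3$ and $\R^3\times\{|y-z|\ge R\}$. Since the kernel is nonnegative and symmetric under $x\leftrightarrow y$, we get
\[
  \Ttail{R}[f;z]\le 2\int_{\R^3}\bigl(I_\alpha*|f|^p\bigr)(x)\,|f_{\rm far}(x)|^p\dd x .
\]

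Next we estimate this integral by H\"older's inequality with exponents $6/(6-p)$ and $6/p$. This is legitimate because $(6-p)/6+p/6=1$. Applying \eqref{eq:HLS-map-critical} to $I_\alpha*|f|^p$ gives
\[
  \int (I_\alpha*|f|^p)|f_{\rm far}|^p
  \le\|I_\alpha*|f|^p\|_{L^{6/(6-p)}}\||f_{\rm far}|^p\|_{L^{6/p}}
  \lesssim_\alpha\||f|^p\|_{L^{6/p}}\|f_{\rm far}\|_{L^6}^p
  =\|f\|_{L^6}^p\|f_{\rm far}\|_{L^6}^p .
\]
This is exactly \eqref{eq:Hartree-tail-HLS}. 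The exponents lie in the admissible HLS range because $3<p<6$, as already noted in the proof of Lemma~\ref{lem:critical-Hartree-estimate}. Also, $f\in L^6$ by Sobolev embedding, so every quantity is finite.

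For \eqref{eq:moving-Hartree-tail-small}, apply \eqref{eq:Hartree-tail-HLS} with $f=U(t)$, $z=x(t)$ and $R=A$. The first factor is controlled uniformly in time: Sobolev embedding and Lemma~\ref{lem:below-threshold-branch} give $\sup_t\|U(t)\|_{L^6}^p\lesssim A_\alpha^{p/2}$. Alternatively, precompactness of $\mathcal K_x$ from Lemma~\ref{lem:translated-orbit-compact} gives the same uniform bound. The second factor is bounded by $\omega_{\rm c}(A)^{p/6}$, with $\omega_{\rm c}$ from \eqref{eq:omega-def}, and Lemma~\ref{lem:physical-tails} shows that $\omega_{\rm c}(A)\to0$. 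Taking the supremum over $t$ first and then letting $A\to\infty$ finishes the argument.

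There is no serious obstacle here. The only point needing care is that the cross term pairs a full factor with a localized one, which is why the bound contains only one small factor. The symmetric reduction above handles this. The set $\{|x-z|\ge R\}\times\{|y-z|\ge R\}$ is counted twice, but that only costs a factor of $2$.
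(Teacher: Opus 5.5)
Your proof is correct and follows essentially the paper's argument. Both use the same symmetric reduction to a single localized factor, followed by HLS; you use the mapping form \eqref{eq:HLS-map-critical} plus H\"older, while the paper uses the equivalent bilinear form with equal exponents $6/p$. Both then get the moving-tail limit from the uniform $L^6$ bound together with Lemma~\ref{lem:physical-tails}.
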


\begin{proof}
Let $\Omega_R:=\{x:|x-z|\ge R\}$. By symmetry of the kernel,
\begin{align*}
  \Ttail{R}[f;z]
  &\le2\iint I_\alpha(x-y)
  \one_{\Omega_R}(x)|f(x)|^p|f(y)|^p\dd x\dd y.
\end{align*}
The bilinear Hardy--Littlewood--Sobolev inequality applies with equal exponent $6/p$, because
\begin{equation*}
  \frac p6+\frac p6+\frac{3-\alpha}{3}=2
\end{equation*}
when $p=3+\alpha$. Hence
\begin{align*}
  \Ttail{R}[f;z]
  &\lesssim_\alpha
  \|\one_{\Omega_R}|f|^p\|_{L^{6/p}}
  \||f|^p\|_{L^{6/p}}\\
  &=\|f\one_{\Omega_R}\|_{L^6}^p\|f\|_{L^6}^p,
\end{align*}
which is \eqref{eq:Hartree-tail-HLS}. The global $L^6$ norm of $U(t)$ is uniformly bounded by Lemma~\ref{lem:below-threshold-branch} and Sobolev embedding, while the moving $L^6$ tail tends to zero by Lemma~\ref{lem:physical-tails}. This proves \eqref{eq:moving-Hartree-tail-small}.
\end{proof}

The fourth-order derivative of the virial cutoff produces a local $L^2$ annular term. It is controlled solely by the critical $L^6$ tail.

\begin{lemma}[Annular local-mass estimate]\label{lem:annular-L2}
For $f\in L^6(\R^3)$, $z\in\R^3$, and $R>0$,
\begin{equation}\label{eq:annular-L2}
  R^{-2}\int_{R\le|x-z|\le2R}|f(x)|^2\dd x
  \lesssim
  \left(\int_{|x-z|\ge R}|f(x)|^6\dd x\right)^{1/3}.
\end{equation}
\end{lemma}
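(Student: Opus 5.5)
The estimate is a direct application of H\"older's inequality on the annulus $\Omega:=\{R\le|x-z|\le2R\}$, followed by enlarging the domain of integration. The argument is scale invariant, so no normalization of $R$ or $z$ is needed.

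First, apply H\"older with exponents $3$ and $3/2$ to the product $|f|^2\cdot\one_{\Omega}$. Since $|f|^2\in L^3$ precisely when $f\in L^6$, this gives
\[
  \int_{\Omega}|f|^2\dd x
  \le\Bigl(\int_{\Omega}|f|^6\dd x\Bigr)^{1/3}|\Omega|^{2/3}.
\]

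Second, compute the measure of the annulus:
\[
  |\Omega|=\tfrac{4\pi}{3}\bigl((2R)^3-R^3\bigr)=\tfrac{28\pi}{3}R^3,
  \qquad
  |\Omega|^{2/3}\lesssim R^2.
\]
The factor $R^2$ is exactly cancelled by the normalization $R^{-2}$ on the left-hand side of \eqref{eq:annular-L2}. This cancellation is the three-dimensional expression of the fact that the $L^2$ mass on an annulus of radius $R$, measured against the critical $L^6$ norm, scales like $R^2$.

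Third, use the inclusion $\Omega\subset\{|x-z|\ge R\}$ to enlarge the $L^6$ integral. This yields \eqref{eq:annular-L2} with an absolute constant.

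There is no genuine obstacle. The only point to check is the exponent bookkeeping $\frac{2}{6}+\frac{2}{3}=1$ in H\"older's inequality. Once this estimate is combined with Lemma~\ref{lem:physical-tails}, it shows that the annular term is bounded by $\omega_{\rm c}(R)^{1/3}$ whenever it is centered at $x(t)$. Hence that term is uniformly small in $t$ for large $R$.
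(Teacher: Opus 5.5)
Your proof is correct and follows essentially the same route as the paper's. Both apply H\"older with exponents $3$ and $3/2$ on the annulus, use $|\Omega|=\tfrac{28\pi}{3}R^3$ to get $|\Omega|^{2/3}\lesssim R^2$, and then enlarge the $L^6$ integral to the region $\{|x-z|\ge R\}$.
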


\begin{proof}
Write
\[
  \Omega_{R,z}:=\{x\in\R^3:R\le|x-z|\le2R\}.
\]
Since
\begin{equation*}
  |\Omega_{R,z}|=\frac{4\pi}{3}(2^3-1)R^3=\frac{28\pi}{3}R^3,
\end{equation*}
H\"older's inequality with exponents $3/2$ and $3$ yields
\begin{align}
  \int_{\Omega_{R,z}}|f|^2\dd x
  &=\int_{\Omega_{R,z}}|f|^2\cdot1\dd x\notag\\
  &\le
  \left(\int_{\Omega_{R,z}}|f|^6\dd x\right)^{1/3}
  |\Omega_{R,z}|^{2/3}\notag\\
  &\le C R^2
  \left(\int_{|x-z|\ge R}|f(x)|^6\dd x\right)^{1/3}.
  \label{eq:annular-L2-expanded}
\end{align}
Dividing \eqref{eq:annular-L2-expanded} by $R^2$ gives \eqref{eq:annular-L2}.
\end{proof}

We next pass from a moving compactness center to a fixed center. Recall $z=x(0)$ and $D_x(T)$ from \eqref{eq:z-Dx}. Given a core radius $A\ge1$, define
\begin{equation}\label{eq:RT-def}
  R_T:=D_x(T)+A,
  \qquad T\ge0.
\end{equation}
For every $0\le t\le T$, the triangle inequality yields
\begin{align}
  B(x(t),A)&\subset B(z,R_T),
  \notag\\
  \{|x-z|\ge R_T\}&\subset\{|x-x(t)|\ge A\},
  \label{eq:fixed-moving-tail-inclusion}\\
  B(z,2R_T)&\subset B(x(t),3R_T).
  \notag
\end{align}
Fix $T>0$, a center $Z\in\R^3$, an occupation radius $A\ge0$, and a tolerance $0<\delta<1$. Denote the exceptional set by
\[
  E_{T,Z,A}:=\{t\in[0,T]:|x(t)-Z|>A\}.
\]
For the occupation-window schematic, assume $|E_{T,Z,A}|\le\delta T$.
To label the schematic compact core below, fix a tolerance $0<\eps_{\rm c}<1$ and choose $L_{\rm c}\ge1$ so that
\begin{equation*}
  \sup_{t\ge0}
  \int_{|x-x(t)|\ge L_{\rm c}}
  \bigl(|\nabla U(t,x)|^2+|U(t,x)|^6\bigr)\dd x
  \le\eps_{\rm c},
\end{equation*}
as permitted by Lemma~\ref{lem:physical-tails}. Finally, choose a fixed virial radius $R\ge A+L_{\rm c}$.
\begin{figure}[H]
\centering
\resizebox{0.98\textwidth}{!}{%
\begin{tikzpicture}[
  scale=0.92,
  every node/.style={font=\small},
  >=Latex
]
  \coordinate (Z) at (-1,0.7);
  \draw[thick] (Z) circle (2.25);
  \fill (Z) circle (1.8pt);
  \node[below left=1pt and 1pt of Z] {$Z$};
  \draw[dashed] (Z) -- ++(135:2.25);
  \node at (-2.2,2.05) {$B(Z,R)$};
  \node at (-2.05,1.28) {$R$};

  \coordinate (Xg) at (0.05,0.95);
  \draw[fill=gray!25,thick] (Xg) circle (0.43);
  \fill (Xg) circle (1.8pt);
  \node[below=2pt of Xg] {$x(t)$};
  \draw[->] (0.85,0.3) -- (0.35,0.72);
  \node[align=left,anchor=west] at (0.88,0.15) {moving core\\$B(x(t),L_{\rm c})$};
  \node[align=left,anchor=east] at (-2.55,-0.75) {good time: $|x(t)-Z|\le A$\\$B(x(t),L_{\rm c})\subset B(Z,R)$};

  \coordinate (Xb) at (3.05,1.65);
  \draw[densely dashed,thick] (Xb) circle (0.43);
  \fill (Xb) circle (1.8pt);
  \draw[dashed,->] (Xg) .. controls (1.25,1.2) and (1.85,0.75) .. (Xb);
  \node[align=left,anchor=west] at (3.55,1.72) {bad time: $t\in E_{T,Z,A}$\\the core may leave $B(Z,R)$};

  \draw[thick] (-3.1,-2.65) rectangle (4.95,-2.30);
  \fill[gray!60] (-0.20,-2.65) rectangle (0.20,-2.30);
  \fill[gray!60] (1.05,-2.65) rectangle (1.35,-2.30);
  \fill[gray!60] (2.05,-2.65) rectangle (2.28,-2.30);
  \node[below] at (-3.1,-2.65) {$0$};
  \node[below] at (4.95,-2.65) {$T$};
  \node[above] at (-1.65,-2.30) {good times};
  \node[above,align=center] at (1.2,-2.30) {bad times};
  \draw[decorate,decoration={brace,mirror,amplitude=4pt}]
    (-0.25,-2.88) -- (2.35,-2.88)
    node[midway,below=5pt,align=center] {$E_{T,Z,A}$, with $|E_{T,Z,A}|\le\delta T$};
\end{tikzpicture}%
}
\caption{Fixed-center occupation geometry. At good times the moving compact core $B(x(t),L_{\rm c})$ is contained in the fixed virial ball $B(Z,R)$ once $R$ dominates $A+L_{\rm c}$. The dark portions of the time strip represent the exceptional set $E_{T,Z,A}$, whose relative measure is small.}
\label{fig:fixed-center-geometry}
\end{figure}
With the geometry in Figure~\ref{fig:fixed-center-geometry} fixed, the moving-center tails transfer to the fixed center in the following form.

\begin{lemma}[Fixed-center tail control]\label{lem:fixed-center-tails}
For every $\eps>0$ there exists $A=A(\eps)\ge1$ such that, for every $T\ge0$, every $0\le t\le T$, and $R_T$ defined by \eqref{eq:RT-def},
\begin{align}
  &\int_{|x-z|\ge R_T}
  \bigl(|\nabla U(t,x)|^2+|U(t,x)|^6\bigr)\dd x
  \le\eps,
  \label{eq:fixed-tail-small}\\
  &R_T^{-2}\int_{R_T\le|x-z|\le2R_T}|U(t,x)|^2\dd x
  \le\eps,
  \label{eq:fixed-annulus-small}\\
  &\Ttail{R_T}[U(t);z]\le\eps.
  \label{eq:fixed-Hartree-tail-small}
\end{align}
\end{lemma}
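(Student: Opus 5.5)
The plan is to reduce all three bounds to the moving-center tail modulus $\omega_{\rm c}$ of Lemma~\ref{lem:physical-tails}, using the inclusion \eqref{eq:fixed-moving-tail-inclusion}. Given $\eps>0$, we choose $A\ge1$ so large that $\omega_{\rm c}(A)\le\eps_1$, where $\eps_1=\eps_1(\eps,\alpha,U)$ is a small parameter fixed at the end. Fix $T\ge0$ and $0\le t\le T$. Since $|x(t)-z|\le D_x(T)$, the triangle inequality gives $\{|x-z|\ge R_T\}\subset\{|x-x(t)|\ge A\}$. Integrating over this smaller set then yields \eqref{eq:fixed-tail-small} directly:
\[
\int_{|x-z|\ge R_T}\bigl(|\nabla U(t,x)|^2+|U(t,x)|^6\bigr)\dd x\le\omega_{\rm c}(A)\le\eps_1 .
\]
All constants here are independent of $T$ and $t$.

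For \eqref{eq:fixed-annulus-small}, apply Lemma~\ref{lem:annular-L2} with $f=U(t)$, center $z$ and radius $R_T$. This bounds the left side by
\[
C\Bigl(\int_{|x-z|\ge R_T}|U(t)|^6\Bigr)^{1/3}\le C\eps_1^{1/3}.
\]
For \eqref{eq:fixed-Hartree-tail-small}, apply Lemma~\ref{lem:Hartree-tail-HLS} with the same center and radius. This gives
\[
\Ttail{R_T}[U(t);z]\lesssim_\alpha \|U(t)\|_{L^6}^p\,\|U(t)\one_{\{|x-z|\ge R_T\}}\|_{L^6}^p .
\]
The first factor is bounded uniformly in $t$ by Lemma~\ref{lem:below-threshold-branch} and Sobolev embedding, namely $\|U(t)\|_6\lesssim A_\alpha^{1/2}$. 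The second factor is at most $\eps_1^{p/6}$ by the first bound. Hence $\Ttail{R_T}[U(t);z]\lesssim_\alpha A_\alpha^{p/2}\eps_1^{p/6}$.

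Finally, choose $\eps_1$ so that $\eps_1$, $C\eps_1^{1/3}$ and $C_\alpha A_\alpha^{p/2}\eps_1^{p/6}$ are all at most $\eps$. The corresponding $A$ then works for every $T$ and every $t\in[0,T]$.

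No step is a genuine obstacle. The only point needing care is that the annulus and HLS bounds involve the tail region $\{|x-z|\ge R_T\}$ itself, not the annulus relative to $x(t)$, so they inherit smallness through the inclusion above. No measurability or continuity of $x(\cdot)$ is needed, because the argument is pointwise in $t$.
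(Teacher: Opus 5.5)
Your proposal is correct and follows essentially the same route as the paper. Like the paper, you fix a small auxiliary tolerance, take $A$ from Lemma~\ref{lem:physical-tails}, and transfer the moving-center tail to the fixed center through \eqref{eq:fixed-moving-tail-inclusion}. You then invoke Lemma~\ref{lem:annular-L2} and Lemma~\ref{lem:Hartree-tail-HLS}, together with the uniform $L^6$ bound coming from the threshold gradient gap.
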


\begin{proof}
Let
\begin{equation*}
  H_6:=\sup_{t\ge0}\|U(t)\|_6<\infty.
\end{equation*}
Choose $0<\eta<1$ so that
\begin{equation*}
  \eta\le\varepsilon,
  \qquad
  C\eta^{1/3}\le\varepsilon,
  \qquad
  C_\alpha H_6^p\eta^{p/6}\le\varepsilon.
\end{equation*}
By Lemma~\ref{lem:physical-tails}, choose $A=A(\eta)$ such that
\begin{equation}\label{eq:moving-tail-eta}
  \sup_{t\ge0}
  \int_{|x-x(t)|\ge A}
  \bigl(|\nabla U(t,x)|^2+|U(t,x)|^6\bigr)\dd x
  \le\eta.
\end{equation}
For $0\le t\le T$, \eqref{eq:fixed-moving-tail-inclusion} and \eqref{eq:moving-tail-eta} give
\begin{align}
  \int_{|x-z|\ge R_T}
  \bigl(|\nabla U(t,x)|^2+|U(t,x)|^6\bigr)\dd x
  &\le\eta\le\varepsilon.
  \label{eq:fixed-tail-first-chain}
\end{align}
Next, Lemma~\ref{lem:annular-L2} yields
\begin{align*}
  R_T^{-2}
  \int_{R_T\le|x-z|\le2R_T}|U(t,x)|^2\dd x
  &\le C
  \left(\int_{|x-z|\ge R_T}|U(t,x)|^6\dd x\right)^{1/3}\notag\\
  &\le C\eta^{1/3}\le\varepsilon.
\end{align*}
Finally, Lemma~\ref{lem:Hartree-tail-HLS} gives
\begin{align}
  \Ttail{R_T}[U(t);z]
  &\le C_\alpha\|U(t)\|_6^p
  \|U(t)\one_{\{|x-z|\ge R_T\}}\|_6^p\notag\\
  &\le C_\alpha H_6^p\eta^{p/6}
  \le\varepsilon.
  \label{eq:fixed-tail-third-chain}
\end{align}
Equations \eqref{eq:fixed-tail-first-chain}--\eqref{eq:fixed-tail-third-chain} prove the three conclusions simultaneously.
\end{proof}

\section{Local mass and preliminary endpoint estimates}

Throughout this section, $U$ denotes a bounded-scale compact critical element. Although solutions are only assumed to lie in the homogeneous energy space, their mass on a fixed bounded set is finite. For $f\in L^q(\R^3)$, $2\le q\le6$, $y\in\R^3$, and $R>0$, H\"older's inequality gives
\begin{equation}\label{eq:local-L2-Lq}
  \|f\|_{L^2(B(y,R))}
  \le |B(y,R)|^{\frac12-\frac1q}\|f\|_{L^q}
  \lesssim R^{\theta(q)}\|f\|_{L^q},
\end{equation}
where $\theta(q)$ is defined in \eqref{eq:theta}. In particular, if $f\in\dot H^1(\R^3)$, Sobolev embedding yields the purely energetic estimate
\begin{equation}\label{eq:local-L2-energy}
  \|f\|_{L^2(B(y,R))}
  \lesssim R\|\nabla f\|_{L^2}.
\end{equation}

The gauge-invariant Hartree nonlinearity contributes no source to the local mass continuity equation. This gives a useful consistency check on the local $L^2$ quantity.

\begin{lemma}[Fixed-center local mass variation]\label{lem:local-mass-variation}
Let $\chi\in C_c^\infty(\R^3)$ be real valued, let $R>0$ and $y\in\R^3$, and set
\begin{equation*}
  \mathfrak m_{R,y}(t)
  :=\int_{\R^3}\chi\left(\frac{x-y}{R}\right)|U(t,x)|^2\dd x.
\end{equation*}
Then
\begin{equation}\label{eq:localized-mass-derivative}
  \frac{\dd}{\dd t}\mathfrak m_{R,y}(t)
  =\frac2R\operatorname{Im}
  \int_{\R^3}
  \nabla\chi\left(\frac{x-y}{R}\right)
  \cdot\nabla U(t,x)\overline{U(t,x)}\dd x.
\end{equation}
Consequently, if $\operatorname{supp}\chi\subset B(0,2)$,
\begin{equation}\label{eq:localized-mass-growth}
  |\mathfrak m_{R,y}'(t)|
  \lesssim R^{-1}\|\nabla U(t)\|_2
  \|U(t)\|_{L^2(B(y,2R))}.
\end{equation}
\end{lemma}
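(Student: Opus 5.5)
The identity \eqref{eq:localized-mass-derivative} follows from the local mass continuity equation for \eqref{eq:Hartree}. I will prove it for smooth, well-localized solutions and then pass to the limit. The growth bound \eqref{eq:localized-mass-growth} then comes from Cauchy--Schwarz.

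\emph{Step 1: formal computation.} Multiply \eqref{eq:Hartree} by $\overline U$ and take imaginary parts. This gives
\[
  \partial_t|U|^2=2\operatorname{Re}(\overline U\partial_tU)
  =-2\operatorname{Im}(\overline U\Delta U)
  -2\operatorname{Im}\bigl[(I_\alpha*|U|^p)|U|^p\bigr].
\]
The last term vanishes, because $(I_\alpha*|U|^p)|U|^{p-2}U\overline U=(I_\alpha*|U|^p)|U|^p$ is real. This is the gauge invariance noted before the lemma. Next use $\operatorname{Im}(\overline U\Delta U)=\nabla\cdot\operatorname{Im}(\overline U\nabla U)$. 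Integrate against $\chi((x-y)/R)$ and integrate by parts once; the factor $R^{-1}$ comes from the chain rule. The result is
\[
  \mathfrak m_{R,y}'(t)=\frac2R\operatorname{Im}\int\nabla\chi\Bigl(\frac{x-y}R\Bigr)\cdot\nabla U\,\overline U\,dx,
\]
which is \eqref{eq:localized-mass-derivative} with exactly the stated sign.

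\emph{Step 2: justification at $\dot H^1$ regularity.} This is the only point needing care, since $U$ is merely a strong $\dot H^1$ solution. First, both sides are well defined. The cutoff is compactly supported, so $\mathfrak m_{R,y}(t)$ is finite by \eqref{eq:local-L2-energy}, and the integrand on the right is in $L^1$ since $\nabla U\in L^2$ and $U\in L^2_{\rm loc}$. Second, the right-hand side is continuous in $t$, because $U\in C_t\dot H^1$ and $\dot H^1\hookrightarrow L^2_{\rm loc}$ continuously. It therefore suffices to prove the integrated identity
\[
  \mathfrak m(t_2)-\mathfrak m(t_1)=\int_{t_1}^{t_2}\text{RHS}(s)\,ds .
\]
To get it, approximate $U(t_1)$ in $\dot H^1$ by Schwartz data, or equivalently mollify the Duhamel formula. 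The local theory (Lemma~\ref{lem:critical-Hartree-estimate} with Strichartz stability) keeps the approximate solutions converging to $U$ in $C([t_1,t_2];\dot H^1)$. For the regularized solutions the computation of Step 1 is rigorous. Both sides of the integrated identity converge by the local continuity just described, which yields the identity for $U$.

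\emph{Step 3: the growth bound.} If $\operatorname{supp}\chi\subset B(0,2)$, then $\nabla\chi((x-y)/R)$ is bounded by $\|\nabla\chi\|_\infty$ and supported in $B(y,2R)$. Cauchy--Schwarz on the right-hand side of \eqref{eq:localized-mass-derivative} gives
\[
  |\mathfrak m_{R,y}'(t)|\le\frac{2\|\nabla\chi\|_\infty}{R}\,\|\nabla U(t)\|_2\,\|U(t)\|_{L^2(B(y,2R))},
\]
which is \eqref{eq:localized-mass-growth}.
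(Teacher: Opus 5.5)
Your proof is correct and follows essentially the same argument as the paper. Both use that the nonlinear contribution $(I_\alpha*|U|^p)|U|^p$ is real, so it drops out (gauge invariance), then integrate by parts once against the rescaled cutoff to produce the factor $R^{-1}$, and finish with Cauchy--Schwarz on $B(y,2R)$. Your only addition is the explicit approximation argument justifying the computation for strong $\dot H^1$ solutions, which the paper leaves implicit.
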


\begin{proof}
Put
\[
  \chi_{R,y}(x):=\chi\left(\frac{x-y}{R}\right),
  \qquad
  V_U(t,x):=(I_\alpha*|U(t)|^p)(x)|U(t,x)|^{p-2}.
\]
Since $V_U$ is real valued and
\[
  \partial_tU=i\Delta U+iV_UU,
\]
one obtains
\begin{align}
  \frac{\dd}{\dd t}\mathfrak m_{R,y}(t)
  &=2\operatorname{Re}
  \int\chi_{R,y}\,\partial_tU\,\overline U\dd x\notag\\
  &=2\operatorname{Re}
  \int\chi_{R,y}
  \bigl(i\Delta U+iV_UU\bigr)\overline U\dd x\notag\\
  &=-2\operatorname{Im}
  \int\chi_{R,y}\,\Delta U\,\overline U\dd x,
  \label{eq:mass-derivative-cancel}
\end{align}
because
\[
  \operatorname{Re}\bigl(iV_U|U|^2\bigr)=0.
\]
Integrating the last term in \eqref{eq:mass-derivative-cancel} by parts,
\begin{align*}
  -\int\chi_{R,y}\Delta U\,\overline U\dd x
  &=\int\chi_{R,y}|\nabla U|^2\dd x
  +\int\nabla\chi_{R,y}\cdot\nabla U\,\overline U\dd x,
\end{align*}
and the first integral is real.  Since
\[
  \nabla\chi_{R,y}(x)
  =R^{-1}\nabla\chi\left(\frac{x-y}{R}\right),
\]
we obtain \eqref{eq:localized-mass-derivative}.  If $\operatorname{supp}\chi\subset B(0,2)$, then
\begin{align*}
  |\mathfrak m_{R,y}'(t)|
  &\le\frac{2\|\nabla\chi\|_\infty}{R}
  \int_{B(y,2R)}|\nabla U(t,x)|\,|U(t,x)|\dd x\notag\\
  &\le\frac{2\|\nabla\chi\|_\infty}{R}
  \|\nabla U(t)\|_2
  \|U(t)\|_{L^2(B(y,2R))}.
\end{align*}
This is \eqref{eq:localized-mass-growth}.
\end{proof}

\section{Fixed-center localized Hartree virial identity}

Except in statements formulated for a generic solution $u$, $U$ denotes a bounded-scale compact critical element. Choose a real-valued radial function $a\in C^\infty(\R^3)$ satisfying
\begin{equation}\label{eq:cutoff-a}
  a(x)=|x|^2\quad(|x|\le1),
  \qquad
  a(x)=\text{constant}\quad(|x|\ge2).
\end{equation}
For $R>0$ and $z\in\R^3$, define
\begin{equation}\label{eq:aRz}
  a_{R,z}(x):=R^2a\left(\frac{x-z}{R}\right).
\end{equation}
The definitions \eqref{eq:cutoff-a} and \eqref{eq:aRz} imply
\begin{align}
  &a_{R,z}(x)=|x-z|^2 && (|x-z|\le R),
  \label{eq:aR-quadratic}\\
  &\operatorname{supp}\nabla a_{R,z}\subset B(z,2R),
  \qquad |\nabla a_{R,z}|\lesssim R,
  \label{eq:aR-gradient}\\
  &|\partial_{jk}a_{R,z}|+|\Delta a_{R,z}|\lesssim1,
  \label{eq:aR-second}\\
  &|\Delta^2a_{R,z}|
  \lesssim R^{-2}\one_{\{R\le|x-z|\le2R\}}.
  \label{eq:aR-fourth}
\end{align}
The fixed-center virial action is
\begin{equation}\label{eq:virial-action}
  \Vcal_{R,z}(t)
  :=2\operatorname{Im}
  \int_{\R^3}\nabla a_{R,z}(x)\cdot\nabla U(t,x)\overline{U(t,x)}\dd x.
\end{equation}
The action \eqref{eq:virial-action} is finite for $U(t)\in\dot H^1$, because $U(t)$ belongs to $L^2$ on bounded sets by \eqref{eq:local-L2-energy}.

For a real $C^2$ weight $b$, define
\begin{equation}\label{eq:Qa-def}
  Q_b(x,y)
  :=\frac{(\nabla b(x)-\nabla b(y))\cdot(x-y)}{|x-y|^2},
  \qquad x\ne y.
\end{equation}
For $Q_b$ defined in \eqref{eq:Qa-def}, global Lipschitz continuity of $\nabla b$ gives
\begin{equation}\label{eq:Qa-bound}
  |Q_b(x,y)|\le\|\nabla^2b\|_{L^\infty}.
\end{equation}
For a differentiable weight $b$, write $b_j:=\partial_jb$ and $b_{jk}:=\partial_{jk}b$. Repeated spatial indices $j,k$ are summed over $\{1,2,3\}$. We also use the Kronecker symbol
\begin{equation*}
  \delta_{jk}:=\begin{cases}1,&j=k,\\0,&j\ne k.\end{cases}
\end{equation*}

We first record the exact virial identity for a general smooth real-valued weight.

\begin{lemma}[Generalized Hartree virial identity]\label{lem:Hartree-virial-identity}
Let $u$ be a smooth decaying solution of \eqref{eq:Hartree}, let $b\in C^4(\R^3)$ be real valued with bounded derivatives of the orders occurring below, and set $\rho:=|u|^p$. Define
\[
  \Vcal_b(t):=2\operatorname{Im}\int_{\R^3}
  \nabla b(x)\cdot\nabla u(t,x)\,\overline{u(t,x)}\dd x.
\]
Then
\begin{align}
 \frac{\dd}{\dd t}\Vcal_b(t)
 ={}&4\int b_{jk}
 \operatorname{Re}(\partial_ju\overline{\partial_ku})\dd x\notag\\
 &-\int\Delta^2b\,|u|^2\dd x\notag\\
 &-\left(2-\frac4p\right)
 \int\Delta b(x)(I_\alpha*\rho)(x)\rho(x)\dd x\notag\\
 &-\frac{2(3-\alpha)}p
 \iint I_\alpha(x-y)Q_b(x,y)\rho(x)\rho(y)\dd x\dd y.
 \label{eq:Hartree-virial-general}
\end{align}
The identity extends to strong $\dot H^1$ solutions for the cutoff $b=a_{R,z}$ by approximation.
\end{lemma}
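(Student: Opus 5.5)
We prove the identity first for smooth, decaying solutions, where every integration by parts is justified. We then pass to strong $\dot H^1$ solutions with the compactly supported cutoff $b=a_{R,z}$ by approximation.

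\textbf{Setup and linear part.} Write the equation as $\partial_t u=i\Delta u+iVu$ with the real potential $V:=(I_\alpha*\rho)|u|^{p-2}$. Differentiating $\Vcal_b$ gives
\[
\frac{\dd}{\dd t}\Vcal_b=2\operatorname{Im}\int b_j\bigl(\partial_j\partial_tu\,\overline u+\partial_ju\,\overline{\partial_tu}\bigr)\dd x .
\]
Integrating by parts in the first term turns this into $-2\operatorname{Im}\int\bigl(2b_j\overline{\partial_j u}+\Delta b\,\overline u\bigr)\partial_tu\,\dd x$. This is the standard Lin--Strauss computation. The contribution of $i\Delta u$ produces
\[
4\int b_{jk}\operatorname{Re}(\partial_ju\,\overline{\partial_ku})\dd x-\int\Delta^2b\,|u|^2\dd x
\]
after two further integrations by parts. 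These are the first two lines of \eqref{eq:Hartree-virial-general}.

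\textbf{Nonlinear part.} The contribution of $iVu$ is $-2\operatorname{Re}\int\bigl(2b_jV\overline{\partial_ju}\,u+\Delta b\,V|u|^2\bigr)\dd x$. Use the identity $2\operatorname{Re}(\overline{\partial_ju}\,u)|u|^{p-2}=\tfrac2p\partial_j\rho$. The first piece becomes
\[
-\frac4p\int b_j(I_\alpha*\rho)\partial_j\rho\,\dd x ,
\]
and the second is $-2\int\Delta b\,(I_\alpha*\rho)\rho\,\dd x$. Integrate the first piece by parts; this is the main computation. It yields
\[
\frac4p\int\Delta b\,(I_\alpha*\rho)\rho\,\dd x+\frac4p\iint b_j(x)\,\partial_jI_\alpha(x-y)\rho(x)\rho(y)\dd x\dd y .
\]
Next symmetrize the double integral in $(x,y)$, using that $\partial_jI_\alpha$ is odd, together with
\[
\nabla I_\alpha(w)=-(3-\alpha)I_\alpha(w)\frac{w}{|w|^2}.
\]
The double integral then becomes $-\tfrac{3-\alpha}{2}\iint I_\alpha Q_b\rho\rho$. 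Collecting the $\Delta b$ terms gives the coefficient $-(2-\tfrac4p)$, and the $Q_b$ term acquires $-\tfrac{2(3-\alpha)}{p}$, as stated.

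The only delicate point is the integrability of the kernel singularity at $x=y$. Since $|Q_b|\le\|\nabla^2b\|_\infty$ by \eqref{eq:Qa-bound}, the symmetrized integrand is bounded by $\|\nabla^2 b\|_\infty I_\alpha(x-y)\rho(x)\rho(y)$. This is integrable by the HLS bound \eqref{eq:D-finite}. We therefore carry out the symmetrization on $\{|x-y|>\epsilon\}$ and let $\epsilon\to0$, checking that the boundary terms vanish because $|w|^{-(2-\alpha)}\cdot\epsilon^2\to0$.

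\textbf{Extension to $\dot H^1$ solutions.} For $b=a_{R,z}$, $\nabla b$ is compactly supported in $B(z,2R)$ and all derivatives are bounded by \eqref{eq:aR-gradient}--\eqref{eq:aR-fourth}. Every term on the right is therefore continuous in $\dot H^1$: the mass term is local and controlled by \eqref{eq:local-L2-energy}, and the Hartree terms are controlled by \eqref{eq:D-finite} and Lemma~\ref{lem:Hartree-tail-HLS}. Approximate $u(t_0)$ by Schwartz data and use the stability and local theory supplied by Lemma~\ref{lem:critical-Hartree-estimate} to get $C_t\dot H^1$ convergence on compact subintervals. We then pass to the limit in the integrated form $\Vcal_b(t_2)-\Vcal_b(t_1)=\int_{t_1}^{t_2}(\cdots)\,dt$, which gives the identity for strong solutions.
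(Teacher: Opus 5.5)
Your proposal is correct and follows essentially the same route as the paper: the Lin--Strauss computation for the linear part, then reducing the nonlinear part to $\frac4p\int\nabla b\cdot\nabla(I_\alpha*\rho)\,\rho-\bigl(2-\frac4p\bigr)\int\Delta b\,(I_\alpha*\rho)\rho$, symmetrizing with $\nabla I_\alpha$, and extending to $b=a_{R,z}$ by density. The only differences are that you integrate by parts in a different order than the paper's $\partial_jF\,\overline u-F\,\partial_j\overline u$ form, and that you add the $\epsilon$-truncation near the diagonal; the latter is a genuine improvement, because for $\alpha\le1$ the kernel $\nabla I_\alpha$ is not locally integrable and the paper's ``interchanging and averaging'' step tacitly needs exactly that principal-value justification.
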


\begin{proof}
Write
\[
  V:=I_\alpha*\rho,
  \qquad F:=V|u|^{p-2}u,
  \qquad \partial_tu=i\Delta u+iF.
\]
Define the nonlinear contribution by
\begin{align}
  \mathfrak N_b
  &:=2\operatorname{Re}\int b_j
  \bigl(\partial_jF\,\overline u-F\partial_j\overline u\bigr)\dd x.
  \label{eq:virial-nonlinear-start}
\end{align}
Differentiation of the action, followed by integration by parts in the linear terms, then gives
\begin{align}
 \frac{\dd}{\dd t}
 \left(2\operatorname{Im}\int\nabla b\cdot\nabla u\,\overline u\dd x\right)
 ={}&4\int b_{jk}\operatorname{Re}(\partial_ju\overline{\partial_ku})\dd x
 -\int\Delta^2b\,|u|^2\dd x
 +\mathfrak N_b.
 \label{eq:virial-linear-plus-nonlinear}
\end{align}
Since
\begin{align*}
  \operatorname{Re}\!\left[
  \partial_j(|u|^{p-2}u)\overline u
  -|u|^{p-2}u\partial_j\overline u
  \right]
  =\frac{p-2}{p}\partial_j\rho,
\end{align*}
substitution into \eqref{eq:virial-nonlinear-start} yields
\begin{align}
  \mathfrak N_b
  &=2\int b_j(\partial_jV)\rho\dd x
  +\frac{2(p-2)}p\int b_jV\partial_j\rho\dd x\notag\\
  &=\frac4p\int\nabla b\cdot\nabla V\,\rho\dd x
  -\left(2-\frac4p\right)\int\Delta b\,V\rho\dd x.
  \label{eq:virial-nonlinear-parts}
\end{align}
The Riesz kernel satisfies
\[
  \nabla I_\alpha(\zeta)
  =-(3-\alpha)I_\alpha(\zeta)\frac{\zeta}{|\zeta|^2},
  \qquad \zeta\ne0.
\]
Interchanging $x$ and $y$ and averaging therefore gives
\begin{align}
  \int\nabla b(x)\cdot\nabla V(x)\rho(x)\dd x
  &=-\frac{3-\alpha}{2}
  \iint I_\alpha(x-y)Q_b(x,y)\rho(x)\rho(y)\dd x\dd y.
  \label{eq:virial-symmetrization}
\end{align}
Combining \eqref{eq:virial-linear-plus-nonlinear}, \eqref{eq:virial-nonlinear-parts}, and \eqref{eq:virial-symmetrization} proves \eqref{eq:Hartree-virial-general}.

For the extension, approximate the initial data and the solution on compact time intervals by smooth solutions. More explicitly, for $f\in\dot H^1(\R^3)$ choose a sequence $(f_n)_{n\ge1}\subset C_c^\infty(\R^3)$ such that
\[
  \|f_n-f\|_{\dot H^1}=\|\nabla(f_n-f)\|_2\longrightarrow0.
\]
The action is continuous along this sequence because, on $B(z,2R)$,
\[
  \|f_n-f\|_{L^2(B(z,2R))}
  \lesssim R\|\nabla(f_n-f)\|_2.
\]
The nonlinear terms are continuous because $f_n\to f$ in $L^6$, hence $|f_n|^p\to|f|^p$ in $L^{6/p}$, and the bilinear HLS form is continuous on $L^{6/p}\times L^{6/p}$. This proves the identity for strong solutions with $b=a_{R,z}$.
\end{proof}

For the exact quadratic weight $b(x)=|x-z|^2$,
\begin{equation}\label{eq:quadratic-data}
  b_{jk}=2\delta_{jk},
  \qquad \Delta b=6,
  \qquad \Delta^2b=0,
  \qquad Q_b(x,y)=2.
\end{equation}
Substituting \eqref{eq:quadratic-data} into \eqref{eq:Hartree-virial-general}, the coefficient of $\Dcal(u)$ is
\begin{align}
  -6\left(2-\frac4p\right)
  -\frac{4(3-\alpha)}p
  &=-12+\frac{12+4\alpha}{p}
  =-8,
  \label{eq:critical-coefficient-check}
\end{align}
where $p=3+\alpha$ was used in the last equality. Hence \eqref{eq:critical-coefficient-check} gives
\begin{equation}\label{eq:exact-quadratic-virial}
  \frac{\dd}{\dd t}
  \left(4\operatorname{Im}\int(x-z)\cdot\nabla u\,\overline u\dd x\right)
  =8\Kcal(u)
\end{equation}
whenever the untruncated expression is meaningful. The cutoff identity below retains this main term.

For $f\in\dot H^1(\R^3)$, $z\in\R^3$, and $R>0$, define the nonnegative Hartree localization-size functional
\begin{align}
  \mathfrak E_R^{\rm H}[f;z]
  :={}&\int_{|x-z|\ge R}|\nabla f(x)|^2\dd x
  +R^{-2}\int_{R\le|x-z|\le2R}|f(x)|^2\dd x\notag\\
  &+\Ttail{R}[f;z].
  \label{eq:Hartree-error-functional}
\end{align}
For a strong solution $u$, define the signed localized virial remainder by
\begin{equation}\label{eq:Err-definition}
  \Err_{R,z}^{\rm H}(t)
  :=\frac{\dd}{\dd t}\Vcal_{R,z}(t)-8\Kcal(u(t)).
\end{equation}

Specializing the general identity to the truncated quadratic weight gives the following main term--error decomposition.

\begin{lemma}[Localized virial error]\label{lem:localized-Hartree-error}
There is a constant $C_{a,\alpha}>0$ such that, for every strong solution $u$ and every fixed $R>0$, $z\in\R^3$,
\begin{equation}\label{eq:localized-virial-decomposition}
  \frac{\dd}{\dd t}\Vcal_{R,z}(t)
  =8\Kcal(u(t))+\Err_{R,z}^{\rm H}(t),
\end{equation}
where the remainder is defined in \eqref{eq:Err-definition} and satisfies
\begin{equation}\label{eq:localized-virial-error-bound}
  |\Err_{R,z}^{\rm H}(t)|
  \le C_{a,\alpha}\mathfrak E_R^{\rm H}[u(t);z].
\end{equation}
\end{lemma}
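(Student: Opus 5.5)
The plan is to apply Lemma~\ref{lem:Hartree-virial-identity} with $b=a_{R,z}$ and subtract the exact quadratic computation \eqref{eq:quadratic-data}--\eqref{eq:critical-coefficient-check}. This shows that every discrepancy is supported where $a_{R,z}$ differs from $|x-z|^2$. Since $8\Kcal(u)=8\|\nabla u\|_2^2-8\Dcal(u)$, I will write
\[
\Err_{R,z}^{\rm H}=\mathrm{I}+\mathrm{II}+\mathrm{III}+\mathrm{IV},
\]
where
\begin{align*}
\mathrm{I}&:=4\int\bigl(\partial_{jk}a_{R,z}-2\delta_{jk}\bigr)\operatorname{Re}(\partial_ju\overline{\partial_ku})\dd x,
\qquad
\mathrm{II}:=-\int\Delta^2a_{R,z}|u|^2\dd x,\\
\mathrm{III}&:=-\Bigl(2-\frac4p\Bigr)\int\bigl(\Delta a_{R,z}-6\bigr)(I_\alpha*\rho)\rho\dd x,\\
\mathrm{IV}&:=-\frac{2(3-\alpha)}p\iint I_\alpha(x-y)\bigl(Q_{a_{R,z}}(x,y)-2\bigr)\rho(x)\rho(y)\dd x\dd y .
\end{align*}
The identity \eqref{eq:critical-coefficient-check} guarantees that the constant parts $6$ and $2$ recombine exactly into $-8\Dcal(u)$. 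So the decomposition \eqref{eq:localized-virial-decomposition} is just the definition \eqref{eq:Err-definition} combined with this bookkeeping.

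For the local terms I will use \eqref{eq:aR-quadratic}--\eqref{eq:aR-fourth}.

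\textbf{Term I.} On $|x-z|<R$ we have $\partial_{jk}a_{R,z}=2\delta_{jk}$, so the integrand of $\mathrm{I}$ vanishes there. Elsewhere it is bounded by $C|\nabla u|^2$, which gives the first term of $\mathfrak E_R^{\rm H}$.

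\textbf{Term II.} By \eqref{eq:aR-fourth}, $\mathrm{II}$ is bounded by $CR^{-2}\int_{R\le|x-z|\le2R}|u|^2$, which is the second term.

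\textbf{Term III.} Here $\Delta a_{R,z}-6$ vanishes for $|x-z|<R$ and is bounded by \eqref{eq:aR-second}. Hence $|\mathrm{III}|\lesssim\iint_{|x-z|\ge R}I_\alpha(x-y)\rho(x)\rho(y)$, and this is at most $\Ttail{R}[u;z]$ by the definition \eqref{eq:Hartree-tail-definition}.

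\textbf{Term IV.} This is the only genuinely two-point term, and I expect it to be the main (though mild) obstacle. The key observation is that if both $|x-z|<R$ and $|y-z|<R$, then $\nabla a_{R,z}(x)-\nabla a_{R,z}(y)=2(x-y)$, so $Q_{a_{R,z}}(x,y)=2$ exactly. One should check that this holds even though the segment between $x$ and $y$ stays in the ball (it does by convexity, but only pointwise gradients are needed anyway). Off this set, \eqref{eq:Qa-bound} together with \eqref{eq:aR-second} gives $|Q_{a_{R,z}}-2|\le\|\nabla^2a\|_\infty+2\lesssim1$, uniformly in $R$ since $\nabla^2a_{R,z}(x)=(\nabla^2a)((x-z)/R)$. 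The region where $|x-z|\ge R$ or $|y-z|\ge R$ is exactly the domain of $\Ttail{R}[u;z]$, so $|\mathrm{IV}|\lesssim\Ttail{R}[u;z]$. All constants depend only on $a$ and $\alpha$ (through $p$).

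The only remaining point is justification for strong solutions. The identity for $b=a_{R,z}$ is already granted by Lemma~\ref{lem:Hartree-virial-identity}. Each term in the decomposition is finite for $u(t)\in\dot H^1$: the local $L^2$ term is controlled by \eqref{eq:local-L2-energy}, and the tail term by Lemma~\ref{lem:Hartree-tail-HLS}. Therefore the pointwise-in-time bound \eqref{eq:localized-virial-error-bound} holds with $C_{a,\alpha}$ independent of $R$, $z$, $t$ and $u$.
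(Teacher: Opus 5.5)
Your proposal is correct and follows the paper's proof essentially verbatim. You apply Lemma~\ref{lem:Hartree-virial-identity} with $b=a_{R,z}$, remove the quadratic main term via the coefficient identity \eqref{eq:critical-coefficient-check}, and bound the kinetic, biharmonic, single-Hartree and double-Hartree discrepancies by the three pieces of $\mathfrak E_R^{\rm H}[u(t);z]$, using \eqref{eq:aR-quadratic}--\eqref{eq:aR-fourth}, \eqref{eq:Qa-bound}, and the fact that $Q_{a_{R,z}}=2$ when both points lie in $B(z,R)$. Your term-by-term bookkeeping is, if anything, slightly cleaner than the paper's ``subtract the full quadratic identity'': it uses only the algebraic identity $8\Kcal(u)=8\|\nabla u\|_2^2-8\Dcal(u)$ and never needs the untruncated virial \eqref{eq:exact-quadratic-virial} to be meaningful.
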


\begin{proof}
Fix a time $t$ and set $f:=u(t)$. Apply Lemma~\ref{lem:Hartree-virial-identity} with $b=a_{R,z}$ and subtract the full quadratic identity \eqref{eq:exact-quadratic-virial}. By \eqref{eq:aR-quadratic}, the Hessian difference vanishes on $B(z,R)$, and \eqref{eq:aR-second} gives
\begin{equation}\label{eq:kinetic-error-control}
  \left|
  4\int(\partial_{jk}a_{R,z}-2\delta_{jk})
  \operatorname{Re}(\partial_jf\overline{\partial_kf})\dd x
  \right|
  \lesssim\int_{|x-z|\ge R}|\nabla f|^2\dd x.
\end{equation}
The fourth-order term is controlled by \eqref{eq:aR-fourth}:
\begin{equation}\label{eq:biharmonic-error-control}
  \left|\int\Delta^2a_{R,z}|f|^2\dd x\right|
  \lesssim R^{-2}
  \int_{R\le|x-z|\le2R}|f|^2\dd x.
\end{equation}

For the first nonlocal term, $\Delta a_{R,z}=6$ on $B(z,R)$ and is uniformly bounded. Therefore its difference from the quadratic contribution is bounded by
\begin{equation}\label{eq:single-Hartree-error}
  C\int_{|x-z|\ge R}(I_\alpha*|f|^p)(x)|f(x)|^p\dd x
  \le C\Ttail{R}[f;z].
\end{equation}
For the double term, \eqref{eq:Qa-bound} gives $|Q_{a_{R,z}}|\lesssim1$, while $Q_{a_{R,z}}=2$ whenever both $x$ and $y$ belong to $B(z,R)$. Hence
\begin{align}
 &\left|\iint I_\alpha(x-y)
  \bigl(Q_{a_{R,z}}(x,y)-2\bigr)|f(x)|^p|f(y)|^p\dd x\dd y\right|\notag\\
 &\qquad\lesssim\Ttail{R}[f;z].
 \label{eq:double-Hartree-error}
\end{align}
Combining \eqref{eq:kinetic-error-control}, \eqref{eq:biharmonic-error-control}, \eqref{eq:single-Hartree-error}, and \eqref{eq:double-Hartree-error} proves \eqref{eq:localized-virial-error-bound}.
\end{proof}

Combining the fixed-center tail estimates with the localized error bound yields uniform smallness on a long time window.

\begin{lemma}[Uniform error control on a time window]\label{lem:uniform-Hartree-error}
Under the compactness and bounded-scale assumptions, for every $\eps>0$ there exists $A=A(\eps)\ge1$ such that, for every $T\ge0$, with
\begin{equation*}
  R_T=D_x(T)+A,
\end{equation*}
one has
\begin{equation}\label{eq:uniform-error-small}
  \sup_{0\le t\le T}
  \mathfrak E_{R_T}^{\rm H}[U(t);z]\le\eps.
\end{equation}
\end{lemma}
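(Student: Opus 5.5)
The bound follows directly from Lemma~\ref{lem:fixed-center-tails}, since the functional $\mathfrak E_R^{\rm H}[f;z]$ in \eqref{eq:Hartree-error-functional} is the sum of exactly three terms. These are the fixed-center kinetic tail outside $B(z,R)$, the normalized annular mass on $\{R\le|x-z|\le2R\}$, and the Hartree tail $\Ttail{R}[f;z]$.

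Given $\eps>0$, apply Lemma~\ref{lem:fixed-center-tails} with tolerance $\eps/3$. This yields $A=A(\eps/3)\ge1$ such that, for every $T\ge0$, every $0\le t\le T$, and $R_T=D_x(T)+A$, each of \eqref{eq:fixed-tail-small}, \eqref{eq:fixed-annulus-small} and \eqref{eq:fixed-Hartree-tail-small} holds with $\eps/3$ on the right. The first of these controls $\int_{|x-z|\ge R_T}|\nabla U(t)|^2\dd x$ after dropping the nonnegative $|U|^6$ part of the integrand. The second is exactly the annular term. The third is exactly the Hartree tail term.

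Summing the three bounds gives $\mathfrak E_{R_T}^{\rm H}[U(t);z]\le\eps$ for each $t\in[0,T]$. Since $A$ does not depend on $T$ or $t$, taking the supremum over $t\in[0,T]$ gives \eqref{eq:uniform-error-small}.

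There is no genuine obstacle here; all the real work was done in Lemma~\ref{lem:fixed-center-tails}. One point to check is that the inclusion \eqref{eq:fixed-moving-tail-inclusion} is used only for $t\le T$, which is where the definition of $D_x(T)$ as a supremum over $[0,T]$ enters. This is why the estimate is uniform on the time window $[0,T]$ rather than on all of $[0,\infty)$. The other is that the uniform bound $\sup_t\|U(t)\|_6<\infty$, needed for the Hartree tail, comes from Lemma~\ref{lem:below-threshold-branch} together with Sobolev embedding, so no assumption beyond compactness and \eqref{eq:below-threshold} is required.
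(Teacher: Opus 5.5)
Your proposal is correct and follows the paper's proof, which combines \eqref{eq:fixed-tail-small}, \eqref{eq:fixed-annulus-small}, and \eqref{eq:fixed-Hartree-tail-small} from Lemma~\ref{lem:fixed-center-tails}. Applying that lemma with tolerance $\eps/3$ makes explicit the constant bookkeeping that the paper's one-line argument leaves implicit.
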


\begin{proof}
The combination of \eqref{eq:fixed-tail-small}, \eqref{eq:fixed-annulus-small}, and \eqref{eq:fixed-Hartree-tail-small} in Lemma~\ref{lem:fixed-center-tails} proves \eqref{eq:uniform-error-small}.
\end{proof}

At bad times a small error is unavailable, so we use the following radius-independent coarse bound.

\begin{lemma}[Uniform rough derivative bound]\label{lem:rough-virial-bound}
Let $U$ satisfy \eqref{eq:below-threshold}. There exists $C_{\rm bad}=C(U,\alpha,a)<\infty$ such that for every $R\ge1$, $Z\in\R^3$, and every time in the lifespan,
\begin{equation}\label{eq:rough-virial-bound}
  \left|\frac{\dd}{\dd t}\Vcal_{R,Z}(t)\right|\le C_{\rm bad}.
\end{equation}
In particular, $\Vcal_{R,Z}'(t)\ge-C_{\rm bad}$.
\end{lemma}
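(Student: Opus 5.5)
The bound follows directly from the general identity in Lemma~\ref{lem:Hartree-virial-identity} with $b=a_{R,Z}$. Each of the four terms in \eqref{eq:Hartree-virial-general} can be estimated by a quantity that does not depend on $R\ge1$ or $Z$, using only the uniform kinetic bound of Lemma~\ref{lem:below-threshold-branch}. That lemma gives $\|\nabla U(t)\|_2^2\le A_\alpha$ throughout the lifespan, and by Sobolev embedding $\sup_t\|U(t)\|_6\lesssim A_\alpha^{1/2}$. We do not compare with the quadratic identity here. At bad times the center may lie far from $Z$, so the tail functional $\mathfrak E_R^{\rm H}$ need not be small, and we only need crude bounds.

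The plan is to bound the terms in turn.

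\begin{enumerate}[label=\textup{(\alph*)}]
\item \emph{Kinetic term.} By \eqref{eq:aR-second}, $|\partial_{jk}a_{R,Z}|\lesssim1$ uniformly in $R,Z$. Hence this term is $\lesssim\|\nabla U(t)\|_2^2\le A_\alpha$.

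\item \emph{Fourth-order term.} By \eqref{eq:aR-fourth} and Lemma~\ref{lem:annular-L2},
\[
  \Bigl|\int\Delta^2a_{R,Z}|U|^2\dd x\Bigr|
  \lesssim R^{-2}\int_{R\le|x-Z|\le2R}|U|^2\dd x
  \lesssim\|U(t)\|_6^2\lesssim A_\alpha .
\]
The point is that Hölder on the annulus produces exactly the factor $R^2$. The bound is therefore scale-invariant and needs no lower bound on $R$.

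\item \emph{Nonlocal terms.} Since $|\Delta a_{R,Z}|\lesssim1$ by \eqref{eq:aR-second} and $|Q_{a_{R,Z}}|\le\|\nabla^2a_{R,Z}\|_\infty\lesssim1$ by \eqref{eq:Qa-bound}, both nonlocal terms are bounded by a constant times $\Dcal(U(t))$. Then \eqref{eq:D-finite} gives $\Dcal(U(t))\lesssim\|U(t)\|_6^{2p}\lesssim A_\alpha^{p}$. Alternatively, one can use \eqref{eq:sharp-HLS-Sobolev} directly.
\end{enumerate}

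Summing the three bounds gives \eqref{eq:rough-virial-bound} with $C_{\rm bad}$ depending only on $A_\alpha$ (hence on $\alpha$), on the cutoff $a$ through the constants in \eqref{eq:aR-second}--\eqref{eq:aR-fourth}, and on $U$ only through the threshold gap.

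The only point needing care is justification, and this is the step I expect to be the main obstacle. The identity \eqref{eq:Hartree-virial-general} is stated for smooth decaying solutions and extended to strong $\dot H^1$ solutions by approximation. I would take the derivative in the sense supplied by that extension, namely the integrated form $\Vcal_{R,Z}(t_2)-\Vcal_{R,Z}(t_1)=\int_{t_1}^{t_2}(\cdots)\dd t$. The bound then holds for the integrand at every time, so $\Vcal_{R,Z}$ is Lipschitz with constant $C_{\rm bad}$. The one-sided statement $\Vcal_{R,Z}'\ge-C_{\rm bad}$ is immediate.
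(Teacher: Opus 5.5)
Your proposal is correct and follows the same route as the paper: you apply the general identity with $b=a_{R,Z}$, bound the kinetic term by $\|\nabla U(t)\|_2^2$, the biharmonic term by $\|U(t)\|_6^2$ via Lemma~\ref{lem:annular-L2}, and both nonlocal terms by $\Dcal(U(t))$, then close with \eqref{eq:D-finite} and the threshold gradient bound. Your added remarks, that $R\ge1$ is not actually needed and that the derivative should be read in the integrated sense, are accurate refinements of the paper's brief justification.
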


\begin{proof}
Use Lemma~\ref{lem:Hartree-virial-identity}, the uniform bounds \eqref{eq:aR-second}--\eqref{eq:aR-fourth}, and \eqref{eq:Qa-bound}. The kinetic term is bounded by $C\|\nabla U(t)\|_2^2$. The fourth-order term satisfies
\[
  \left|\int\Delta^2a_{R,Z}|U|^2\dd x\right|
  \lesssim R^{-2}\int_{R\le|x-Z|\le2R}|U|^2\dd x
  \lesssim\|U(t)\|_6^2
\]
by Lemma~\ref{lem:annular-L2}. The two Hartree terms are bounded by $C_\alpha\Dcal(U(t))$, because $\Delta a_{R,Z}$ and $Q_{a_{R,Z}}$ are uniformly bounded. Finally, \eqref{eq:D-finite} and the threshold gradient bound give a time-independent constant.
\end{proof}

At times when the compact core lies well inside the fixed ball, coercivity dominates all localization errors.

\begin{lemma}[Good-time positivity]\label{lem:good-time-positivity}
Let $U\not\equiv0$ satisfy the bounded-scale compact-element assumptions and set
\begin{equation*}
  c_{\rm good}:=4\kappa_0>0.
\end{equation*}
There exists $L_0\ge1$ such that, whenever $t\ge0$, $Z\in\R^3$, $A\ge0$, and $R>0$ satisfy
\begin{equation}\label{eq:good-time-geometry}
  |x(t)-Z|\le A,
  \qquad
  R\ge4(L_0+A+1),
\end{equation}
one has
\begin{equation}\label{eq:good-time-derivative}
  \Vcal_{R,Z}'(t)\ge c_{\rm good}.
\end{equation}
\end{lemma}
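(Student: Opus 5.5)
The plan is to combine the localized virial decomposition of Lemma~\ref{lem:localized-Hartree-error} with the positive virial gap of Lemma~\ref{lem:positive-virial-gap}, applied at the fixed center $Z$ instead of $x(t)$. By Lemma~\ref{lem:localized-Hartree-error} with $u=U$, $z=Z$,
\[
  \Vcal_{R,Z}'(t)\ge 8\Kcal(U(t))-C_{a,\alpha}\,\mathfrak E_R^{\rm H}[U(t);Z]
  \ge 8\kappa_0-C_{a,\alpha}\,\mathfrak E_R^{\rm H}[U(t);Z].
\]
Here we use that $U\not\equiv0$ satisfies \eqref{eq:below-threshold}, so \eqref{eq:K-positive-gap} holds for all $t\ge0$. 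It therefore suffices to ensure $C_{a,\alpha}\mathfrak E_R^{\rm H}[U(t);Z]\le 4\kappa_0$. Then $\Vcal_{R,Z}'(t)\ge 4\kappa_0=c_{\rm good}$.

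To arrange this, set $\eps:=4\kappa_0/(3C_{a,\alpha})$ and bound each of the three pieces of \eqref{eq:Hartree-error-functional} by $\eps$. We use the moving-center tails exactly as in the proof of Lemma~\ref{lem:fixed-center-tails}, replacing $z$ and $R_T$ by $Z$ and $R$.

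\begin{enumerate}
\item Choose $\eta$ as in that proof, depending only on $\eps$, $H_6:=\sup_t\|U(t)\|_6$, and $\alpha$.
\item Let $L_0:=A_{\rm c}(\eta)$ from Lemma~\ref{lem:physical-tails}.
\item Under \eqref{eq:good-time-geometry}, $R\ge L_0+A$. So if $|x-Z|\ge R$, then $|x-x(t)|\ge R-A\ge L_0$, i.e.\ $\{|x-Z|\ge R\}\subset\{|x-x(t)|\ge L_0\}$.
\item The kinetic tail and the $L^6$ tail outside $B(Z,R)$ are therefore at most $\eta$.
\item The annular mass term is at most $C\eta^{1/3}$ by Lemma~\ref{lem:annular-L2}.
\item The Hartree tail $\Ttail{R}[U(t);Z]$ is at most $C_\alpha H_6^p\eta^{p/6}$ by \eqref{eq:Hartree-tail-HLS}.
\end{enumerate}

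With $\eta$ chosen as indicated, each of these is at most $\eps$. Since $L_0$ depends only on $U$, $\alpha$ and the cutoff $a$ through $\kappa_0$, $C_{a,\alpha}$ and $H_6$, it is independent of $t$, $Z$, $A$ and $R$, as the statement requires. The generous margin $4(L_0+A+1)$ is more than enough; only $R\ge L_0+A$ is used.

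The main point to check is that the inclusion argument really requires nothing about $x(\cdot)$ beyond $|x(t)-Z|\le A$ at the single time $t$. That holds because Lemma~\ref{lem:physical-tails} is uniform in $t$. A second check is that \eqref{eq:localized-virial-error-bound} applies to the strong solution $U$ with an arbitrary center $Z$, which Lemma~\ref{lem:Hartree-virial-identity} guarantees by approximation. No further obstacle is expected; the constant bookkeeping ($\eps$ versus $\kappa_0$) is the only delicate step.
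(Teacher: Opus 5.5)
Your proposal is correct and matches the paper's proof: you use the localized virial decomposition at the fixed center $Z$, the inclusion $\{|x-Z|\ge R\}\subset\{|x-x(t)|\ge L_0\}$ coming from $R-A\ge L_0$, and the kinetic/$L^6$, annular (Lemma~\ref{lem:annular-L2}) and Hartree (\eqref{eq:Hartree-tail-HLS}) tail bounds, absorbed against $8\kappa_0$ to leave $4\kappa_0$. The only difference is cosmetic: you fix $\eta$ first and set $L_0:=A_{\rm c}(\eta)$, whereas the paper chooses $L_0$ directly so that the combined expression in $\omega_{\rm c}(L_0)$ is at most $4\kappa_0$.
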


\begin{proof}
Define
\begin{equation*}
  H_6(U):=\sup_{t\ge0}\|U(t)\|_6<\infty.
\end{equation*}
Since $\omega_{\rm c}(L)\to0$ by \eqref{eq:omega-def}, choose $L_0\ge1$ so large that
\begin{equation}\label{eq:L0-good-choice}
  C_{a,\alpha}\left[
  \omega_{\rm c}(L_0)
  +C\omega_{\rm c}(L_0)^{1/3}
  +C_\alpha H_6(U)^p\omega_{\rm c}(L_0)^{p/6}
  \right]
  \le4\kappa_0.
\end{equation}
Under \eqref{eq:good-time-geometry}, $|y-Z|\ge R$ implies
\begin{equation*}
  |y-x(t)|\ge R-A\ge L_0.
\end{equation*}
The kinetic term in \eqref{eq:Hartree-error-functional} is therefore at most $\omega_{\rm c}(L_0)$; Lemma~\ref{lem:annular-L2} bounds the annular term by $C\omega_{\rm c}(L_0)^{1/3}$; and Lemma~\ref{lem:Hartree-tail-HLS} bounds the Hartree term by $C_\alpha H_6(U)^p\omega_{\rm c}(L_0)^{p/6}$. Thus \eqref{eq:L0-good-choice} and \eqref{eq:localized-virial-error-bound} give
\begin{equation*}
  |\Err_{R,Z}^{\rm H}(t)|\le4\kappa_0.
\end{equation*}
Since $8\Kcal(U(t))\ge8\kappa_0$ by \eqref{eq:K-positive-gap}, the decomposition \eqref{eq:localized-virial-decomposition} yields \eqref{eq:good-time-derivative}.
\end{proof}

The improvement of the virial endpoint estimate rests on the following normalized local-mass decay.

\begin{lemma}[Compact density modulus]\label{lem:density-modulus}
Let $U$ satisfy \eqref{eq:AP-intro} and \eqref{eq:bounded-scale-intro}. Then the function $\omega_U$ defined in \eqref{eq:density-modulus-intro} satisfies
\begin{equation}\label{eq:density-modulus-decay}
  \omega_U(R)\longrightarrow0
  \qquad(R\to\infty).
\end{equation}
\end{lemma}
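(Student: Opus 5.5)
We must show $R^{-2}\sup_t\int_{|y-x(t)|\le R}|U(t,y)|^2\dd y\to0$ as $R\to\infty$. The plan is to split the ball $B(x(t),R)$ into a fixed compact core $B(x(t),L)$ and the shell $L\le|y-x(t)|\le R$, and to treat each region by H\"older's inequality against the critical $L^6$ norm. The smallness comes from two places: the uniform $L^6$ tail of Lemma~\ref{lem:physical-tails} on the shell, and the extra factor $R^{-2}$ on the core.

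Fix $\eps>0$ and use Lemma~\ref{lem:physical-tails} to choose $L=A_{\rm c}(\eps)\ge1$ such that
\[
\sup_{t\ge0}\int_{|y-x(t)|\ge L}|U(t,y)|^6\dd y\le\eps .
\]
Let $H_6:=\sup_{t\ge0}\|U(t)\|_6$, which is finite by Lemma~\ref{lem:below-threshold-branch} and Sobolev embedding.

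\textbf{Core.} On the core, \eqref{eq:local-L2-Lq} with $q=6$ gives
\[
\int_{B(x(t),L)}|U|^2\dd y\lesssim L^2H_6^2 .
\]

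\textbf{Shell.} On the shell, H\"older's inequality with exponents $3$ and $3/2$, exactly as in the proof of Lemma~\ref{lem:annular-L2}, gives
\[
\int_{L\le|y-x(t)|\le R}|U|^2\dd y\le|B(0,R)|^{2/3}\Bigl(\int_{|y-x(t)|\ge L}|U|^6\dd y\Bigr)^{1/3}\lesssim R^2\eps^{1/3}.
\]

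\textbf{Conclusion.} Adding the two bounds, dividing by $R^2$, and taking the supremum over $t\ge0$, we obtain
\[
\omega_U(R)\lesssim \frac{L^2H_6^2}{R^2}+\eps^{1/3}.
\]
Hence $\limsup_{R\to\infty}\omega_U(R)\lesssim\eps^{1/3}$, and since $\eps>0$ was arbitrary, \eqref{eq:density-modulus-decay} follows.

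The only substantive step is that the $L^6$ tail is uniform in time around the moving center $x(t)$, which is exactly what Lemma~\ref{lem:physical-tails} supplies via the translated-orbit compactness. The bounded-scale assumption \eqref{eq:bounded-scale-intro} enters only through that lemma, in the choice $A_{\rm c}=\max\{1,L/N_-\}$. Everything else is H\"older's inequality, so I expect no real obstacle. The point to watch is that the core contribution is not small in itself; it is controlled only through the $R^{-2}$ normalization, and this is why $L$ must be fixed before $R\to\infty$.
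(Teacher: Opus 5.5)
Your proof is correct and follows the paper's argument: you split $B(x(t),R)$ at a fixed radius $L$ taken from Lemma~\ref{lem:physical-tails}, apply H\"older against $L^6$ on the core and on the shell to get $\omega_U(R)\lesssim L^2/R^2+(\text{tail})^{1/3}$, and send $R\to\infty$ before the tolerance goes to zero (the paper only differs cosmetically by taking the tail $\le\varepsilon^3$). One minor remark: $H_6<\infty$ already follows from the $\dot H^1$ precompactness of $\mathcal K_x$ (Lemma~\ref{lem:translated-orbit-compact}) and Sobolev embedding, so under the lemma's stated hypotheses you do not need the threshold input from Lemma~\ref{lem:below-threshold-branch}.
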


\begin{proof}
Fix $0<\varepsilon<1$.  By Lemma~\ref{lem:physical-tails}, choose $L=L(\varepsilon)$ such that
\begin{equation*}
  \sup_{t\ge0}
  \int_{|y-x(t)|\ge L}|U(t,y)|^6\dd y
  \le\varepsilon^3.
\end{equation*}
Define
\begin{equation*}
  C_{\rm dens}(U):=|B(0,1)|^{2/3}
  \sup_{t\ge0}\|U(t)\|_6^2<\infty.
\end{equation*}
For $R\ge2L$, decompose
\begin{align}
  \int_{|y-x(t)|\le R}|U(t,y)|^2\dd y
  &=\int_{|y-x(t)|\le L}|U(t,y)|^2\dd y\notag\\
  &\quad+
  \int_{L<|y-x(t)|\le R}|U(t,y)|^2\dd y.
  \label{eq:density-split}
\end{align}
The two terms satisfy
\begin{align}
  \int_{|y-x(t)|\le L}|U(t,y)|^2\dd y
  &\le |B(0,L)|^{2/3}\|U(t)\|_6^2
  \le C_{\rm dens}(U)L^2,
  \label{eq:density-inner}\\
  \int_{L<|y-x(t)|\le R}|U(t,y)|^2\dd y
  &\le |B(0,R)|^{2/3}
  \left(\int_{|y-x(t)|\ge L}|U(t,y)|^6\dd y\right)^{1/3}\notag\\
  &\le C R^2\varepsilon.
  \label{eq:density-outer}
\end{align}
Divide \eqref{eq:density-split} by $R^2$, use \eqref{eq:density-inner}--\eqref{eq:density-outer}, and take the supremum in $t$:
\begin{equation}\label{eq:density-modulus-bound}
  0\le\omega_U(R)
  \le C_{\rm dens}(U)\frac{L(\varepsilon)^2}{R^2}+C\varepsilon.
\end{equation}
Passing first to $R\to\infty$ and then to $\varepsilon\downarrow0$ in \eqref{eq:density-modulus-bound} proves \eqref{eq:density-modulus-decay}.
\end{proof}

The density modulus immediately yields the endpoint estimate needed at good times.

\begin{lemma}[Density endpoint]\label{lem:density-endpoint}
Let $t\ge0$, $Z\in\R^3$, $A\ge0$, and $R>0$. If $|x(t)-Z|\le A$ and $A\le R$, then
\begin{equation}\label{eq:density-endpoint}
  |\Vcal_{R,Z}(t)|
  \lesssim_U R^2\omega_U(3R)^{1/2}.
\end{equation}
If, in addition, $U(t)\in L^2$, then
\begin{equation}\label{eq:finite-mass-endpoint}
  |\Vcal_{R,Z}(t)|
  \lesssim_U R\|U(t)\|_2.
\end{equation}
\end{lemma}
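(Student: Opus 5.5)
The plan is to estimate the action \eqref{eq:virial-action} directly by Cauchy--Schwarz, using that $\nabla a_{R,Z}$ is supported in $B(Z,2R)$ and has size $O(R)$ by \eqref{eq:aR-gradient}. This gives
\begin{equation*}
  |\Vcal_{R,Z}(t)|
  \lesssim R\,\|\nabla U(t)\|_2\,\|U(t)\|_{L^2(B(Z,2R))}.
\end{equation*}
By Lemma~\ref{lem:below-threshold-branch}, $\|\nabla U(t)\|_2\le A_\alpha^{1/2}$ uniformly in time. That constant is absorbed into $\lesssim_U$, so everything reduces to bounding the local mass on $B(Z,2R)$.

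For \eqref{eq:finite-mass-endpoint} there is nothing more to do. If $U(t)\in L^2$, then $\|U(t)\|_{L^2(B(Z,2R))}\le\|U(t)\|_2$, and the displayed bound gives $R\|U(t)\|_2$ up to a constant depending on $U$. No geometric hypothesis is used here.

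For \eqref{eq:density-endpoint} I recenter at the moving core. The assumptions $|x(t)-Z|\le A\le R$ give, by the triangle inequality,
\begin{equation*}
  B(Z,2R)\subset B(x(t),3R).
\end{equation*}
The definition \eqref{eq:density-modulus-intro}, applied at radius $3R$, then yields
\begin{equation*}
  \int_{B(Z,2R)}|U(t)|^2\dd y
  \le\int_{|y-x(t)|\le3R}|U(t,y)|^2\dd y
  \le 9R^2\omega_U(3R).
\end{equation*}
Taking square roots gives $\|U(t)\|_{L^2(B(Z,2R))}\le3R\,\omega_U(3R)^{1/2}$, and multiplying by $R$ produces the claimed $R^2\omega_U(3R)^{1/2}$.

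The main point to check is that $\omega_U(3R)$ is defined, since \eqref{eq:density-modulus-intro} requires the radius to be at least $1$. For $R\ge1/3$ this holds automatically. For smaller $R$, I would instead bound the local mass by \eqref{eq:local-L2-energy}, giving $\|U(t)\|_{L^2(B(Z,2R))}\lesssim R$. I would then note that the estimate is only applied later with $R$ large. Alternatively, one can read $\omega_U$ as extended by the same formula for all $R>0$; the bound above holds verbatim with that extension. Beyond this, the argument uses only the support and size of $\nabla a_{R,Z}$, so there is no real obstacle.
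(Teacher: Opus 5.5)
Your proposal is correct and matches the paper's proof: Cauchy--Schwarz with the support and size bounds \eqref{eq:aR-gradient}, the inclusion $B(Z,2R)\subset B(x(t),3R)$ combined with the definition of $\omega_U$ for \eqref{eq:density-endpoint}, and simply dropping the spatial restriction for \eqref{eq:finite-mass-endpoint}. Your remark that $\omega_U(3R)$ is only defined for $R\ge 1/3$ is a fair fine point that the paper leaves implicit. It is harmless, since the lemma is used only with $R_n\to\infty$.
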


\begin{proof}
By \eqref{eq:aR-gradient},
\[
 |\Vcal_{R,Z}(t)|
 \lesssim R\|\nabla U(t)\|_2
 \left(\int_{|y-Z|\le2R}|U(t,y)|^2\dd y\right)^{1/2}.
\]
The assumption $A\le R$ implies $B(Z,2R)\subset B(x(t),3R)$, which gives \eqref{eq:density-endpoint}. Dropping the spatial restriction in the last factor gives \eqref{eq:finite-mass-endpoint}.
\end{proof}

The preceding coercivity and tail estimates combine into a positive derivative statement on intervals controlled by maximal drift.

\begin{proposition}[Positive localized virial derivative]\label{prop:positive-virial-derivative}
Let $U$ satisfy \eqref{eq:below-threshold}, \eqref{eq:AP-intro}, and \eqref{eq:bounded-scale-intro}. If $U\not\equiv0$, then there exists $A_0\ge1$ such that for every $T\ge0$, with
\begin{equation}\label{eq:RT-A0}
  R_T:=D_x(T)+A_0,
\end{equation}
one has
\begin{equation}\label{eq:positive-localized-derivative}
  \frac{\dd}{\dd t}\Vcal_{R_T,z}(t)\ge4\kappa_0,
  \qquad 0\le t\le T,
\end{equation}
where $\kappa_0$ is defined in \eqref{eq:kappa-def}.
\end{proposition}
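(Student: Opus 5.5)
The plan is to combine the localized virial decomposition of Lemma~\ref{lem:localized-Hartree-error} with the uniform window control of Lemma~\ref{lem:uniform-Hartree-error} and the coercive gap of Lemma~\ref{lem:positive-virial-gap}. Since $U\not\equiv0$ satisfies \eqref{eq:below-threshold}, Lemma~\ref{lem:positive-virial-gap} gives $\Kcal(U(t))\ge\kappa_0>0$ for all $t\ge0$. By \eqref{eq:localized-virial-decomposition}, for every $R>0$ we have
\[
  \frac{\dd}{\dd t}\Vcal_{R,z}(t)\ge8\kappa_0-|\Err_{R,z}^{\rm H}(t)|.
\]
It therefore suffices to make $|\Err_{R_T,z}^{\rm H}(t)|\le4\kappa_0$ uniformly for $0\le t\le T$ and all $T$.

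To do this, apply Lemma~\ref{lem:uniform-Hartree-error} with $\eps:=4\kappa_0/C_{a,\alpha}$, where $C_{a,\alpha}$ is the constant in \eqref{eq:localized-virial-error-bound}. This produces $A(\eps)\ge1$, and we set $A_0:=A(\eps)$. The lemma then gives
\[
  \sup_{0\le t\le T}\mathfrak E_{R_T}^{\rm H}[U(t);z]\le\eps
\]
with $R_T=D_x(T)+A_0$, which is exactly \eqref{eq:RT-A0}. Inserting this into \eqref{eq:localized-virial-error-bound} gives
\[
  |\Err_{R_T,z}^{\rm H}(t)|\le C_{a,\alpha}\eps=4\kappa_0,\qquad 0\le t\le T,
\]
and hence \eqref{eq:positive-localized-derivative}. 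The key point is that $A_0$ depends only on $\eps$, and hence only on $U$ and $\alpha$. It is independent of $T$ because the fixed-center inclusions \eqref{eq:fixed-moving-tail-inclusion} convert the moving tail $\omega_{\rm c}(A_0)$ into a fixed-center tail at radius $R_T$ for every $t\le T$.

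Two technical points need checking; neither looks like a real obstacle. First, $\Vcal_{R_T,z}$ must be differentiable with the identity \eqref{eq:Hartree-virial-general} valid for strong $\dot H^1$ solutions. This is the approximation statement at the end of Lemma~\ref{lem:Hartree-virial-identity}, and the paper takes it as given. If one prefers, one can argue with the integrated form $\Vcal(t_2)-\Vcal(t_1)=\int_{t_1}^{t_2}(\cdots)$, which yields the same lower bound almost everywhere and for the difference quotients. Second, $\kappa_0=2c_1\Ecal(u_0)$ refers to the data of $U$, namely $U_0$, which satisfies \eqref{eq:below-threshold} by the definition of a compact critical element. Alternatively, one can run the same argument as in Lemma~\ref{lem:good-time-positivity} with $Z=z$ and $A=D_x(T)$, but then the radius constraint $R\ge4(L_0+A+1)$ is worse than \eqref{eq:RT-A0}. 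The direct route through Lemma~\ref{lem:uniform-Hartree-error} is preferable.
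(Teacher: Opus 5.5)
Your proposal is correct and follows the paper's proof: fix the tolerance $4\kappa_0/C_{a,\alpha}$ in Lemma~\ref{lem:uniform-Hartree-error} to get a $T$-independent $A_0$, bound the remainder by $4\kappa_0$ on $[0,T]$ via \eqref{eq:localized-virial-error-bound}, and combine this with $8\Kcal(U(t))\ge8\kappa_0$ from Lemma~\ref{lem:positive-virial-gap} in \eqref{eq:localized-virial-decomposition}. You and the paper both leave implicit that $D_x(T)<\infty$, which is needed for $R_T$ to be defined since $x(t)$ is only measurable; it follows from $U\in C([0,T];\dot H^1)$, the uniform gradient tail, and $\|\nabla U(t)\|_2^2\ge2\Ecal(U_0)>0$, by the escaping-ball argument at the start of Lemma~\ref{lem:sublinear-drift} with gradient mass in place of $L^2$ mass.
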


\begin{proof}
By Lemma~\ref{lem:positive-virial-gap},
\begin{equation*}
  8\Kcal(U(t))\ge8\kappa_0.
\end{equation*}
Choose $A_0$ in Lemma~\ref{lem:uniform-Hartree-error} so that
\begin{equation*}
  C_{a,\alpha}\mathfrak E_{R_T}^{\rm H}[U(t);z]\le4\kappa_0
\end{equation*}
uniformly for $0\le t\le T$. Then \eqref{eq:localized-virial-decomposition} gives \eqref{eq:positive-localized-derivative}.
\end{proof}

We next record the complementary upper bound for the localized virial action.

\begin{proposition}[Virial action upper bound]\label{prop:virial-action-bound}
Assume
\begin{equation*}
  \sup_{t\ge0}\|U(t)\|_{\dot H^1}<\infty
\end{equation*}
and $M_q(U)<\infty$ for some $q\in[2,6]$. Then, for every $R>0$ and $t\ge0$,
\begin{equation}\label{eq:action-upper-general}
  |\Vcal_{R,z}(t)|
  \lesssim_{U,q}R^{1+\theta(q)}.
\end{equation}
In particular, for $R=R_T$ from \eqref{eq:RT-A0},
\begin{equation}\label{eq:action-upper-RT}
  \sup_{0\le t\le T}|\Vcal_{R_T,z}(t)|
  \lesssim_{U,q}R_T^{1+\theta(q)}.
\end{equation}
\end{proposition}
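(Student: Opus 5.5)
The bound is a direct Cauchy--Schwarz/Hölder estimate on the definition \eqref{eq:virial-action}, using only the support and size of $\nabla a_{R,z}$ from \eqref{eq:aR-gradient}. First we bound the integrand pointwise and restrict to the support of the weight:
\[
  |\Vcal_{R,z}(t)|
  \le 2\int_{B(z,2R)}|\nabla a_{R,z}|\,|\nabla U(t)|\,|U(t)|\dd x
  \lesssim R\,\|\nabla U(t)\|_2\,\|U(t)\|_{L^2(B(z,2R))}.
\]
The first factor is bounded uniformly in $t$ by the hypothesis $\sup_{t\ge0}\|U(t)\|_{\dot H^1}<\infty$; this also follows from Lemma~\ref{lem:below-threshold-branch}.

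For the local mass factor we invoke \eqref{eq:local-L2-Lq} with $y=z$ and radius $2R$. This gives
\[
  \|U(t)\|_{L^2(B(z,2R))}\lesssim (2R)^{\theta(q)}\|U(t)\|_{L^q}\le 2^{\theta(q)}R^{\theta(q)}M_q(U).
\]
Multiplying the two estimates yields $|\Vcal_{R,z}(t)|\lesssim R^{1+\theta(q)}\sup_t\|\nabla U(t)\|_2\,M_q(U)$. The implicit constant depends only on $\|\nabla a\|_\infty$, $q$, and these two uniform bounds on $U$. This proves \eqref{eq:action-upper-general} for every $R>0$ and $t\ge0$. The bound \eqref{eq:action-upper-RT} is then just the special case $R=R_T$, followed by taking the supremum over $0\le t\le T$; the right-hand side does not depend on $t$.

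There is no genuine obstacle; the only points to check are the following.
\begin{enumerate}[label=\textup{(\alph*)}]
\item The action is well defined for $\dot H^1$ data, since $U(t)$ is locally $L^2$ by \eqref{eq:local-L2-energy}.
\item The center $z=x(0)$ plays no role, since the estimate is uniform in the center.
\item The endpoint cases work: $q=6$ reproduces the energetic bound $R^2$, and $q=2$ gives $R\,\|U\|_2$, consistent with \eqref{eq:finite-mass-endpoint}.
\end{enumerate}
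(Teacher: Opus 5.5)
Your proposal is correct and is the paper's argument: it uses Cauchy--Schwarz with the support and size bound \eqref{eq:aR-gradient} to get $R\|\nabla U(t)\|_2\|U(t)\|_{L^2(B(z,2R))}$, then applies \eqref{eq:local-L2-Lq} to bound the local mass by $R^{\theta(q)}M_q(U)$. The bound \eqref{eq:action-upper-RT} then follows as the special case $R=R_T$.
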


\begin{proof}
By \eqref{eq:aR-gradient}, Cauchy--Schwarz, and the support of $\nabla a_{R,z}$,
\begin{align}
  |\Vcal_{R,z}(t)|
  &\lesssim R\|\nabla U(t)\|_2
  \|U(t)\|_{L^2(B(z,2R))}.
  \label{eq:action-CS}
\end{align}
Applying \eqref{eq:local-L2-Lq} to the last factor gives
\begin{equation*}
  \|U(t)\|_{L^2(B(z,2R))}
  \lesssim R^{\theta(q)}M_q(U).
\end{equation*}
Substitution into \eqref{eq:action-CS} proves \eqref{eq:action-upper-general} and \eqref{eq:action-upper-RT}.
\end{proof}

\section{Optimal fixed-center window exclusion}

Throughout this section, $U$ denotes a bounded-scale compact critical element. This section establishes the fixed-center exclusion mechanism used in Theorem~\ref{thm:window-scattering}. The virial center is frozen on each long interval, so no derivative of the modulation center $x(t)$ is required.

Define
\begin{equation}\label{eq:delta-star-definition}
  \delta_*:=\min\left\{\frac14,
  \frac{c_{\rm good}}{3c_{\rm good}+C_{\rm bad}}\right\},
\end{equation}
where $c_{\rm good}$ and $C_{\rm bad}$ are given by Lemmas~\ref{lem:good-time-positivity} and~\ref{lem:rough-virial-bound}.

We combine good-time positivity, the bad-time coarse estimate, and the density endpoint bound.

\begin{proposition}[Window-modulus contradiction]\label{prop:window-modulus}
Let $U$ be a bounded-scale compact critical element and let $0<\delta_{\rm w}<\delta_*$. Suppose there are $T_n\to\infty$, $A_n\ge0$, and $Z_n\in\R^3$ such that
\begin{equation*}
  \left|\{t\in[0,T_n]:|x(t)-Z_n|>A_n\}\right|
  \le\delta_{\rm w}T_n,
\end{equation*}
and
\begin{equation}\label{eq:window-Rn}
  R_n:=4(A_n+L_0+1)+T_n^{1/4},
\end{equation}
where $L_0$ is the radius in Lemma~\ref{lem:good-time-positivity}. Assume moreover that
\begin{equation}\label{eq:window-modulus-compatibility}
  \frac{R_n^2\omega_U(3R_n)^{1/2}}{T_n}\longrightarrow0.
\end{equation}
Then $U\equiv0$.
\end{proposition}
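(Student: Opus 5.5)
Argue by contradiction: assume $U\not\equiv0$, so that $\kappa_0>0$ and Lemmas~\ref{lem:good-time-positivity} and~\ref{lem:rough-virial-bound} are available with constants $c_{\rm good}=4\kappa_0$ and $C_{\rm bad}$. Fix $n$ and integrate $\Vcal_{R_n,Z_n}'$ over $[0,T_n]$, splitting the interval into good and bad times. The bad set is
\[
E_n:=\{t\in[0,T_n]:|x(t)-Z_n|>A_n\}, \qquad |E_n|\le\delta_{\rm w}T_n .
\]
On $[0,T_n]\setminus E_n$ we have $|x(t)-Z_n|\le A_n$. By \eqref{eq:window-Rn}, $R_n\ge4(L_0+A_n+1)$, so \eqref{eq:good-time-geometry} holds and Lemma~\ref{lem:good-time-positivity} gives $\Vcal'_{R_n,Z_n}(t)\ge c_{\rm good}$. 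On $E_n$, Lemma~\ref{lem:rough-virial-bound} gives $\Vcal'_{R_n,Z_n}(t)\ge-C_{\rm bad}$; this applies since $R_n\ge4\ge1$. The derivative identity holds in integrated form for strong solutions by Lemma~\ref{lem:Hartree-virial-identity}. Hence
\[
\Vcal_{R_n,Z_n}(T_n)-\Vcal_{R_n,Z_n}(0)\ge c_{\rm good}(1-\delta_{\rm w})T_n-C_{\rm bad}\delta_{\rm w}T_n .
\]
Since $\delta_{\rm w}<\delta_*\le c_{\rm good}/(3c_{\rm good}+C_{\rm bad})$, we get $\delta_{\rm w}(c_{\rm good}+C_{\rm bad})<c_{\rm good}(1-2\delta_{\rm w})$. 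The right-hand side above is therefore at least $2c_{\rm good}\delta_{\rm w}T_n$, and more usefully at least roughly $\tfrac23 c_{\rm good}T_n$. In any case it is at least $c\,T_n$ for some $c>0$ independent of $n$.

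Next bound the endpoints. Lemma~\ref{lem:density-endpoint} needs a time $t$ with $|x(t)-Z_n|\le A_n$, but the endpoints $0$ and $T_n$ may be bad times. The first remedy is to integrate not over $[0,T_n]$ but over $[s_n,t_n]$. Here $s_n\in[0,\delta' T_n]$ and $t_n\in[(1-\delta')T_n,T_n]$ are good times, with $\delta'$ a fixed small multiple such as $\delta'=\tfrac14$. Such times exist because $|E_n|\le\delta_{\rm w}T_n<\tfrac14T_n$: the bad set cannot cover an interval of length $T_n/4$. The lost portions have length at most $T_n/2$ in total, so the lower bound degrades only by a fixed fraction. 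The margin in $\delta_*$, including the factor $3$ and the cap $\tfrac14$, is designed for exactly this; redo the good/bad count on $[s_n,t_n]$, whose length is at least $T_n/2$. At the two good endpoints, $A_n\le R_n$ holds trivially, so \eqref{eq:density-endpoint} gives $|\Vcal_{R_n,Z_n}(s_n)|+|\Vcal_{R_n,Z_n}(t_n)|\lesssim_U R_n^2\omega_U(3R_n)^{1/2}$.

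Combining the two bounds gives $c\,T_n\lesssim_U R_n^2\omega_U(3R_n)^{1/2}$. Dividing by $T_n$ contradicts \eqref{eq:window-modulus-compatibility}, so $U\equiv0$. The main obstacle is the constant bookkeeping in the good-endpoint selection. One must check that, after restricting to $[s_n,t_n]$, the bad measure is still at most $\delta_{\rm w}T_n$ while the length is at least $T_n/2$. Then $c_{\rm good}(T_n/2-\delta_{\rm w}T_n)-C_{\rm bad}\delta_{\rm w}T_n>0$ should follow from $\delta_{\rm w}<\delta_*$; if the stated $\delta_*$ is slightly too tight, choose $s_n,t_n$ in shorter end intervals of length $2\delta_{\rm w}T_n$. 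The term $T_n^{1/4}$ in $R_n$ serves only to force $R_n\to\infty$, so that $\omega_U(3R_n)$ is evaluated at large radii; it plays no role in the contradiction itself.
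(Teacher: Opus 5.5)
Your strategy matches the paper's. You choose good times near both ends of $[0,T_n]$ and split $\int\Vcal'_{R_n,Z_n}$ into good and bad times using Lemmas~\ref{lem:good-time-positivity} and~\ref{lem:rough-virial-bound}. You then bound the two good endpoints by Lemma~\ref{lem:density-endpoint}. The gap sits in the step you flagged, and neither of your choices closes it.

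With end windows of length $T_n/4$, the count on $[s_n,t_n]$ gives
\[
\int_{s_n}^{t_n}\Vcal'_{R_n,Z_n}\dd t\ge c_{\rm good}(t_n-s_n)-(c_{\rm good}+C_{\rm bad})\delta_{\rm w}T_n\ge\Bigl(\tfrac{c_{\rm good}}{2}-(c_{\rm good}+C_{\rm bad})\delta_{\rm w}\Bigr)T_n .
\]
This is positive only if $\delta_{\rm w}<c_{\rm good}/(2c_{\rm good}+2C_{\rm bad})$. That threshold is strictly below $c_{\rm good}/(3c_{\rm good}+C_{\rm bad})$ whenever $C_{\rm bad}>c_{\rm good}$.

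In fact $C_{\rm bad}\ge c_{\rm good}$ always: apply both lemmas at $Z=x(t)$, $A=0$, $R=4(L_0+1)$. Hence $\delta_*=c_{\rm good}/(3c_{\rm good}+C_{\rm bad})$, and in general there is a range $\delta_{\rm w}<\delta_*$ where your lower bound is negative. Your fallback, windows of length $2\delta_{\rm w}T_n$, is worse. It only gives $t_n-s_n\ge(1-4\delta_{\rm w})T_n$, so positivity needs $\delta_{\rm w}<c_{\rm good}/(5c_{\rm good}+C_{\rm bad})$.

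The fix, which is what the paper does, is to take windows of length $\sigma T_n$ with $\sigma\in(\delta_{\rm w},1/2)$ only slightly larger than $\delta_{\rm w}$. Good times exist in $[0,\sigma T_n]$ and in $[(1-\sigma)T_n,T_n]$ because $|E_n|\le\delta_{\rm w}T_n<\sigma T_n$. Since $t_n-s_n\ge(1-2\sigma)T_n$, the lower bound becomes $\gamma T_n$ with
\[
\gamma=c_{\rm good}(1-2\sigma)-(c_{\rm good}+C_{\rm bad})\delta_{\rm w}.
\]
As $\sigma\downarrow\delta_{\rm w}$, this tends to $c_{\rm good}-(3c_{\rm good}+C_{\rm bad})\delta_{\rm w}>0$. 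That limit is exactly where the factor $3$ in $\delta_*$ comes from. With this choice the rest of your argument goes through unchanged.

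One incidental correction: your bound on $[0,T_n]$ can be as small as about $\frac{2c_{\rm good}^2}{3c_{\rm good}+C_{\rm bad}}T_n$, not roughly $\frac23c_{\rm good}T_n$. You never use it, so this does not affect the proof.
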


\begin{proof}
Assume $U\not\equiv0$. By \eqref{eq:delta-star-definition}, one may choose $\sigma\in(\delta_{\rm w},1/2)$ so close to $\delta_{\rm w}$ that
\begin{equation*}
  \gamma
  :=c_{\rm good}(1-2\sigma)
  -(c_{\rm good}+C_{\rm bad})\delta_{\rm w}>0.
\end{equation*}
For each $n$, set
\begin{equation*}
  G_n:=\{t\in[0,T_n]:|x(t)-Z_n|\le A_n\},
  \qquad
  B_n:=[0,T_n]\setminus G_n.
\end{equation*}
Then
\begin{equation*}
  |B_n|\le\delta_{\rm w}T_n<\sigma T_n.
\end{equation*}
Consequently, neither $[0,\sigma T_n]$ nor $[(1-\sigma)T_n,T_n]$ can be contained in $B_n$.  We may therefore choose
\begin{equation*}
  t_{1,n}\in[0,\sigma T_n]\cap G_n,
  \qquad
  t_{2,n}\in[(1-\sigma)T_n,T_n]\cap G_n,
\end{equation*}
so that
\begin{equation*}
  t_{2,n}-t_{1,n}\ge(1-2\sigma)T_n.
\end{equation*}

The radius in \eqref{eq:window-Rn} satisfies
\[
  R_n\ge4(A_n+L_0+1),
\]
hence Lemma~\ref{lem:good-time-positivity} applies on $G_n$, while Lemma~\ref{lem:rough-virial-bound}, in the form \eqref{eq:rough-virial-bound}, applies everywhere.  Splitting the time integral gives
\begin{align}
 &\Vcal_{R_n,Z_n}(t_{2,n})
  -\Vcal_{R_n,Z_n}(t_{1,n})\notag\\
 &\quad=\int_{[t_{1,n},t_{2,n}]\cap G_n}
  \Vcal_{R_n,Z_n}'(t)\dd t
  +\int_{[t_{1,n},t_{2,n}]\cap B_n}
  \Vcal_{R_n,Z_n}'(t)\dd t\notag\\
 &\quad\ge
  c_{\rm good}
  |[t_{1,n},t_{2,n}]\cap G_n|
  -C_{\rm bad}
  |[t_{1,n},t_{2,n}]\cap B_n|\notag\\
 &\quad=
  c_{\rm good}(t_{2,n}-t_{1,n})
  -(c_{\rm good}+C_{\rm bad})
  |[t_{1,n},t_{2,n}]\cap B_n|\notag\\
 &\quad\ge
  c_{\rm good}(1-2\sigma)T_n
  -(c_{\rm good}+C_{\rm bad})\delta_{\rm w}T_n
  =\gamma T_n.
  \label{eq:window-linear-growth}
\end{align}
Both endpoints belong to $G_n$, and $A_n\le R_n$.  Lemma~\ref{lem:density-endpoint} therefore yields
\begin{align}
 &|\Vcal_{R_n,Z_n}(t_{1,n})|
  +|\Vcal_{R_n,Z_n}(t_{2,n})|\notag\\
 &\qquad\lesssim_U R_n^2\omega_U(3R_n)^{1/2}
  =o(T_n),
  \label{eq:window-endpoint-small}
\end{align}
by \eqref{eq:window-modulus-compatibility}.  Combining \eqref{eq:window-linear-growth} and \eqref{eq:window-endpoint-small},
\[
  \gamma T_n
  \le o(T_n),
\]
which is impossible for large $n$.
\end{proof}

The optimal occupation radius therefore gives the principal fixed-center exclusion statement.

\begin{proposition}[Fixed-center occupation-window exclusion]\label{prop:window-rigidity}
Let $U$ be a bounded-scale compact critical element and let $\delta_*$ be defined by \eqref{eq:delta-star-definition}. If, for some $\delta_0\in(0,\delta_*)$,
\begin{equation}\label{eq:window-main-condition}
  \liminf_{T\to\infty}
  \frac{\Aopt_T(\delta_0)}{T^{1/2}}<\infty,
\end{equation}
then $U\equiv0$. Equivalently, every nonzero bounded-scale compact critical element satisfies
\begin{equation}\label{eq:window-residual-law}
  \frac{\Aopt_T(\delta)}{T^{1/2}}\longrightarrow\infty
  \qquad\text{for every }\delta\in(0,\delta_*).
\end{equation}
\end{proposition}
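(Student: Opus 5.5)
The plan is to reduce the statement directly to Proposition~\ref{prop:window-modulus}. Assume $U\not\equiv0$ and that \eqref{eq:window-main-condition} holds for some $\delta_0\in(0,\delta_*)$. Then there are $T_n\to\infty$ and a constant $K<\infty$ with $\Aopt_{T_n}(\delta_0)\le K T_n^{1/2}$.

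The first step is to extract a nearly optimal occupation window for each $n$. By definition \eqref{eq:Aopt-def} of the infimum, there is $A_n'\le \Aopt_{T_n}(\delta_0)+1$ with $\mopt_{T_n}(A_n')\le\delta_0$. The map $A\mapsto \mopt_T(A)$ is nonincreasing, so this holds also for the closure of the admissible set, but the $+1$ slack avoids that issue. Since $\mopt$ is itself an infimum over $Z$, the attained value may not be reached exactly. To absorb this, fix $\delta_{\rm w}\in(\delta_0,\delta_*)$ and choose $Z_n$ with
\[
T_n^{-1}\bigl|\{t\in[0,T_n]:|x(t)-Z_n|>A_n'\}\bigr|\le\delta_{\rm w}.
\]
Set $A_n:=A_n'$, so that $A_n\le KT_n^{1/2}+1$.

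The second step is to verify the compatibility condition \eqref{eq:window-modulus-compatibility}. With $R_n=4(A_n+L_0+1)+T_n^{1/4}$, we get $R_n\lesssim_{K,L_0} T_n^{1/2}$ for large $n$, and $R_n\ge T_n^{1/4}\to\infty$. Hence $R_n^2/T_n$ is bounded, while $\omega_U(3R_n)\to0$ by Lemma~\ref{lem:density-modulus}. Therefore $R_n^2\omega_U(3R_n)^{1/2}/T_n\to0$. Proposition~\ref{prop:window-modulus} then yields $U\equiv0$, a contradiction.

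For the equivalent formulation \eqref{eq:window-residual-law}, note that for nonzero $U$ the first part gives $\liminf_{T\to\infty}\Aopt_T(\delta)/T^{1/2}=\infty$ for every $\delta\in(0,\delta_*)$. This is exactly the statement that the ratio tends to infinity. Conversely, \eqref{eq:window-residual-law} contradicts \eqref{eq:window-main-condition}.

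The only delicate point is the double infimum in the definitions of $\mopt$ and $\Aopt$, together with the strictness of $\delta_0<\delta_{\rm w}<\delta_*$. This is handled by the slack choices above. Everything else is bookkeeping on the sizes of $R_n$.
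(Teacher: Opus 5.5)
Your proposal is correct and follows essentially the same route as the paper. You extract near-optimal windows with $A_n\le\Aopt_{T_n}(\delta_0)+1$ and exceptional fraction at most $\delta_{\rm w}\in(\delta_0,\delta_*)$, observe that $R_n=4(A_n+L_0+1)+T_n^{1/4}$ is $O(T_n^{1/2})$ and tends to infinity so that Lemma~\ref{lem:density-modulus} yields the compatibility condition, invoke Proposition~\ref{prop:window-modulus}, and obtain the growth law as the contrapositive. Your explicit use of $\mopt_{T_n}(A_n)\le\delta_0<\delta_{\rm w}$ to produce $Z_n$ spells out what the paper states briefly, and you correctly skip the paper's unneeded passage to a subsequence.
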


\begin{proof}[Proof of Proposition~\ref{prop:window-rigidity}]
Assume \eqref{eq:window-main-condition} and $U\not\equiv0$. Choose $\delta_{\rm w}$ with
\[
  \delta_0<\delta_{\rm w}<\delta_*.
\]
There are $S_n\to\infty$ and $C_0<\infty$ such that
\begin{equation}\label{eq:Aopt-bounded-sequence}
  \Aopt_{S_n}(\delta_0)\le C_0S_n^{1/2}.
\end{equation}
Passing to a subsequence, write $T_j=S_{n_j}$. By \eqref{eq:mopt-def} and \eqref{eq:Aopt-def}, choose $A_j\le\Aopt_{T_j}(\delta_0)+1$ and $Z_j\in\R^3$ such that, for all large $j$,
\[
  \left|\{t\in[0,T_j]:|x(t)-Z_j|>A_j\}\right|
  \le\delta_{\rm w}T_j.
\]
Define the corresponding frozen radii by
\begin{equation*}
  R_j:=4(A_j+L_0+1)+T_j^{1/4}.
\end{equation*}
Equation \eqref{eq:Aopt-bounded-sequence} and this definition imply
\begin{equation}\label{eq:Rj-diffusive-bounded}
  \sup_j\frac{R_j^2}{T_j}<\infty,
  \qquad R_j\to\infty.
\end{equation}
By Lemma~\ref{lem:density-modulus},
\[
  \omega_U(3R_j)\to0.
\]
Combining this with \eqref{eq:Rj-diffusive-bounded} proves \eqref{eq:window-modulus-compatibility}. Proposition~\ref{prop:window-modulus} gives the contradiction. The necessary growth law \eqref{eq:window-residual-law} is the contrapositive.
\end{proof}

The maximal-drift version follows by centering the virial ball at the initial concentration center.

\begin{proposition}[Quantitative fixed-center drift bound]\label{prop:quantitative-drift}
Let $U\not\equiv0$ satisfy \eqref{eq:below-threshold}, \eqref{eq:AP-intro}, and \eqref{eq:bounded-scale-intro}. If $M_q(U)<\infty$ for some $q\in[2,6]$, then
\begin{equation}\label{eq:quantitative-drift-main}
  T\le C(U,q,\alpha)
  \bigl(D_x(T)+1\bigr)^{1+\theta(q)},
  \qquad T\ge1.
\end{equation}
\end{proposition}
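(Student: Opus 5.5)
The plan is to integrate the positive localized virial derivative of Proposition~\ref{prop:positive-virial-derivative} over $[0,T]$ and to compare the result with the action upper bound of Proposition~\ref{prop:virial-action-bound}. Fix $T\ge1$. Let $A_0\ge1$ be the radius furnished by Proposition~\ref{prop:positive-virial-derivative}; it depends only on $U$ and $\alpha$, not on $T$. Set $R_T:=D_x(T)+A_0$ and center the truncated weight at $z=x(0)$. Proposition~\ref{prop:positive-virial-derivative} gives $\Vcal_{R_T,z}'(t)\ge4\kappa_0$ for $0\le t\le T$. Since $U\not\equiv0$, we have $\kappa_0>0$ by Lemma~\ref{lem:positive-virial-gap}.

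By the fundamental theorem of calculus, which applies to the action of a strong solution by Lemma~\ref{lem:Hartree-virial-identity} and the approximation argument there,
\[
  4\kappa_0T\le\Vcal_{R_T,z}(T)-\Vcal_{R_T,z}(0)\le2\sup_{0\le t\le T}|\Vcal_{R_T,z}(t)|.
\]
The uniform $\dot H^1$ bound from Lemma~\ref{lem:below-threshold-branch} and the hypothesis $M_q(U)<\infty$ allow us to apply \eqref{eq:action-upper-RT}. This bounds the right-hand side by $C(U,q)R_T^{1+\theta(q)}$. Because $A_0\ge1$, we have $R_T=D_x(T)+A_0\le A_0\bigl(D_x(T)+1\bigr)$, hence
\[
  T\le\frac{C(U,q)A_0^{1+\theta(q)}}{4\kappa_0}\bigl(D_x(T)+1\bigr)^{1+\theta(q)}.
\]
This is \eqref{eq:quantitative-drift-main}, with $C(U,q,\alpha)$ absorbing $\kappa_0$, $A_0$ and the implicit constants.

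There is no deep obstacle here; the point to check is the uniformity in $T$. The radius $A_0$ in Proposition~\ref{prop:positive-virial-derivative} comes from Lemma~\ref{lem:uniform-Hartree-error} and is chosen independently of $T$. The only $T$-dependence of $R_T$ enters through $D_x(T)$, which is exactly what the triangle-inclusion \eqref{eq:fixed-moving-tail-inclusion} requires. We also need $D_x(T)<\infty$ for each fixed $T$, otherwise the statement is vacuous. This holds because $D_x(T)=\infty$ makes the right-hand side infinite, so the inequality is trivially true. Finally, measurability of $x(t)$ is not needed, since the argument uses only the pointwise bound $|x(t)-z|\le D_x(T)$.
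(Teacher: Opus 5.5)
Your proposal is correct and matches the paper's proof essentially step for step: freeze $R_T=D_x(T)+A_0$ centered at $z=x(0)$, integrate the lower bound $\Vcal_{R_T,z}'\ge4\kappa_0$ from Proposition~\ref{prop:positive-virial-derivative} over $[0,T]$, bound the two endpoint actions with Proposition~\ref{prop:virial-action-bound}, and absorb $A_0$ into the constant. Your remark about $D_x(T)$ is really a case distinction rather than a proof that $D_x(T)$ is finite: when $D_x(T)=\infty$ the inequality holds trivially, and otherwise your argument applies; the proof is unaffected.
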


\begin{proof}
Choose $A_0$ as in Proposition~\ref{prop:positive-virial-derivative} and, for each $T\ge1$, freeze
\begin{equation*}
  R_T:=D_x(T)+A_0
\end{equation*}
on the whole interval $[0,T]$.  Integrating \eqref{eq:positive-localized-derivative},
\begin{align}
  4\kappa_0T
  &\le\int_0^T\Vcal_{R_T,z}'(t)\dd t\notag\\
  &=\Vcal_{R_T,z}(T)-\Vcal_{R_T,z}(0)\notag\\
  &\le|\Vcal_{R_T,z}(T)|+|\Vcal_{R_T,z}(0)|.
  \label{eq:quantitative-virial-chain}
\end{align}
Proposition~\ref{prop:virial-action-bound} gives
\begin{align}
  |\Vcal_{R_T,z}(T)|+|\Vcal_{R_T,z}(0)|
  &\le C(U,q,\alpha)R_T^{1+\theta(q)}\notag\\
  &\le C(U,q,\alpha)
  \bigl(D_x(T)+1\bigr)^{1+\theta(q)},
  \label{eq:quantitative-endpoint-chain}
\end{align}
where the last constant absorbs $A_0$.  Combining \eqref{eq:quantitative-virial-chain} and \eqref{eq:quantitative-endpoint-chain} proves \eqref{eq:quantitative-drift-main}.
\end{proof}

Substituting the three most useful spatial $L^q$ controls gives the following concrete drift rates.

\begin{corollary}[Three drift layers]\label{cor:three-drift-layers}
Under the hypotheses of Proposition~\ref{prop:quantitative-drift},
\begin{equation}\label{eq:three-drift-layers}
  \begin{aligned}
  q=6&:\quad T\lesssim(D_x(T)+1)^2,\\
  q=4&:\quad T\lesssim(D_x(T)+1)^{7/4},\\
  q=2&:\quad T\lesssim D_x(T)+1.
  \end{aligned}
\end{equation}
\end{corollary}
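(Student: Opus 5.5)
The corollary is a direct specialization of Proposition~\ref{prop:quantitative-drift}. My plan is to compute the exponent $1+\theta(q)$ from \eqref{eq:theta} for each of the three values of $q$ and substitute it into \eqref{eq:quantitative-drift-main}. Since $\theta(q)=3(\tfrac12-\tfrac1q)$, the three values are:
\begin{itemize}
\item $\theta(6)=3\cdot\tfrac13=1$, so the exponent is $2$;
\item $\theta(4)=3\cdot\tfrac14=\tfrac34$, so the exponent is $\tfrac74$;
\item $\theta(2)=0$, so the exponent is $1$.
\end{itemize}
Inserting these exponents into \eqref{eq:quantitative-drift-main} gives the three lines of \eqref{eq:three-drift-layers}. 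The implicit constants are $C(U,q,\alpha)$, and \eqref{eq:quantitative-drift-main} holds for $T\ge1$.

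The hypothesis $M_q(U)<\infty$ must be checked (or assumed) separately in each layer. For $q=6$ it is automatic: Lemma~\ref{lem:below-threshold-branch} gives the uniform gradient bound, and Sobolev embedding then gives $M_6(U)\lesssim\sup_t\|\nabla U(t)\|_2<\infty$. So the $q=6$ layer holds unconditionally for every nonzero bounded-scale compact element. For $q=4$ and $q=2$, the statement is conditional on the corresponding $M_q(U)<\infty$, as in the hypotheses of Proposition~\ref{prop:quantitative-drift}. For $q=2$, finiteness is the conserved mass, $M_2(U)=\Mcal(U)^{1/2}$.

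There is no real obstacle. The only point needing care is the range $T\ge1$. Alternatively, one can note that for $T<1$ the bounds hold trivially because $D_x(T)+1\ge1$. Hence \eqref{eq:three-drift-layers} holds for all $T>0$ after enlarging the constants.

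As a consistency check, I will note that these layers are exactly the contrapositive of the drift rates \eqref{eq:drift-rate-examples}. If $D_x(T)=o(T^{1/(1+\theta(q))})$, then the right-hand side of \eqref{eq:quantitative-drift-main} is $o(T)$, contradicting it for large $T$. This gives $T^{1/2}$, $T^{4/7}$ and $T$ for $q=6,4,2$ respectively.
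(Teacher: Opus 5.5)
Your proposal is correct and follows the paper's proof: you compute $1+\theta(q)=2,\ \tfrac74,\ 1$ for $q=6,4,2$, substitute into \eqref{eq:quantitative-drift-main}, and note that $M_6(U)<\infty$ follows automatically from the threshold gradient bound and Sobolev embedding. Your added remarks on the case $T<1$ and on the contrapositive drift rates \eqref{eq:drift-rate-examples} are accurate.
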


\begin{proof}
For $q=6$, Sobolev embedding and the threshold gradient bound give $M_6(U)<\infty$, and $1+\theta(6)=2$. Direct calculation gives $1+\theta(4)=7/4$ and $1+\theta(2)=1$. Proposition~\ref{prop:quantitative-drift} then yields \eqref{eq:three-drift-layers}.
\end{proof}

\paragraph{Pointwise and averaged consequences.}
Proposition~\ref{prop:window-rigidity} applies, in particular, if along a sequence $T_n\to\infty$ there are fixed centers $Z_n$ such that
\[
  \sup_{0\le t\le T_n}|x(t)-Z_n|=O(T_n^{1/2}).
\]
It also applies to $L^r$-in-time center bounds through Markov's inequality: if, for some $r>0$ and $Z_T\in\R^3$,
\[
  \int_0^T|x(t)-Z_T|^r\dd t=O(T^{1+r/2})
\]
along a sequence, then $\Aopt_T(\delta)=O(T^{1/2})$ for every fixed $\delta>0$.

\section{Low-frequency tail exclusion}

This section establishes the low-frequency exclusion mechanism used in Theorem~\ref{thm:lowfreq-scattering}. Let $\operatorname{Id}$ denote the identity operator and define
\[
  P_{>\nu}:=\operatorname{Id}-P_{\le\nu},
  \qquad
  P_{\nu<|\nabla|\le M}:=P_{\le M}-P_{\le\nu}
  \quad(0<\nu<M).
\]
The low-frequency hypothesis first supplies finite mass and then upgrades translated $\dot H^1$ compactness to $L^2$ compactness. Throughout this section, $U$ denotes a bounded-scale compact critical element; the low-frequency hypothesis will be imposed explicitly whenever it is used.

The full low-frequency exclusion is recorded in the following proposition.
\begin{proposition}[Low-frequency exclusion]\label{prop:lowfreq-rigidity}
Let $U$ be a bounded-scale compact critical element satisfying \eqref{eq:lowfreq-main}. Then
\begin{equation}\label{eq:lowfreq-conclusions-intro}
  U\in L_t^\infty H_x^1,
  \qquad
  \mathcal K_x\Subset L^2(\R^3),
  \qquad
  \mathcal K_x\Subset\dot H^1(\R^3).
\end{equation}
If $U\not\equiv0$, then $\Mcal(U)>0$, and we may define the zero-momentum Galilean parameter by
\begin{equation}\label{eq:zero-momentum-xi-intro}
  \xi_*:=-\frac{\Pcal(U)}{\Mcal(U)}.
\end{equation}
The transformed solution remains below threshold, has zero momentum, and its transformed center
\[
  x_*(t):=x(t)+2\xi_*t
\]
satisfies
\begin{equation}\label{eq:sublinear-center-intro}
  \sup_{0\le t\le T}|x_*(t)-x_*(0)|=o(T).
\end{equation}
These conclusions contradict the finite-mass virial estimate; consequently, $U\equiv0$.
\end{proposition}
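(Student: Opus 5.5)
The plan has four stages: first obtain finite mass and $L^2$ compactness, then remove the momentum by a Galilean boost, then prove sublinear drift, and finally close with the virial estimate.

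\emph{Step 1: finite mass and compactness.} Split $U(t)=P_{\le\nu}U(t)+P_{>\nu}U(t)$. Since $\|P_{>\nu}f\|_2\le\nu^{-1}\|\nabla f\|_2$, hypothesis \eqref{eq:lowfreq-main} together with Lemma~\ref{lem:below-threshold-branch} gives $\sup_t\|U(t)\|_2<\infty$, so $U\in L_t^\infty H_x^1$.

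For $L^2$ precompactness of $\mathcal K_x$, I will use the translation invariance of $\|P_{\le\nu}\cdot\|_2$. Given $\varepsilon>0$:
\begin{enumerate}
\item choose $\nu$ so that the low-frequency part of every element of $\mathcal K_x$ has $L^2$ norm at most $\varepsilon$;
\item note that $\dot H^1$ compactness of $\mathcal K_x$ (Lemma~\ref{lem:translated-orbit-compact}) transfers to $P_{>\nu}\mathcal K_x$ in $L^2$, because $\|P_{>\nu}(f-g)\|_2\le\nu^{-1}\|\nabla(f-g)\|_2$.
\end{enumerate}
This makes $\mathcal K_x$ totally bounded in $L^2$. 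Mass conservation in $H^1$ holds because the local theory persists at $H^1$ regularity. If $U\not\equiv0$, then $\Mcal(U)>0$.

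\emph{Step 2: zero momentum.} Apply the Galilean transformation $U_*(t,x)=e^{i x\cdot\xi_*-it|\xi_*|^2}U(t,x-2\xi_*t)$ with $\xi_*$ as in \eqref{eq:zero-momentum-xi-intro}. Then $\Pcal(U_*)=\Pcal(U)+\xi_*\Mcal(U)=0$. The kinetic energy changes to $\|\nabla U_*\|_2^2=\|\nabla U\|_2^2-\Pcal(U)^2/\Mcal(U)\le\|\nabla U\|_2^2$, while $\Dcal$ is unchanged. Hence $\Ecal(U_*)\le\Ecal(U)$ and the gradient bound is preserved, so $U_*$ stays below threshold.

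Compactness of the orbit is preserved modulo the phase $e^{ix\cdot\xi_*}$. Multiplication by this phase is continuous on $H^1$ and preserves compact sets there, so the translated orbit of $U_*$ about $x_*(t)$ is precompact in $H^1$ and has uniform $L^2$ spatial tails.

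\emph{Step 3: sublinear drift (the main obstacle).} Here there is no differentiability of $x(t)$ to work with, so I will argue through truncated barycenters. Set $X_R(t)=\int\phi_R(x-x_*(0))\,x\,|U_*|^2$, with $\phi_R$ a cutoff at scale $R$. Lemma~\ref{lem:local-mass-variation}-type computations give $X_R'(t)=2\Pcal(U_*)+O(\text{mass outside }B(x_*(0),R))$. The leading term vanishes since $\Pcal(U_*)=0$.

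Suppose, for contradiction, that $|x_*(t_n)-x_*(0)|\ge\eta T_n$ for some $t_n\le T_n$. I will take $R\sim\eta T_n$ and let $t_n$ be the first time the center reaches distance $\eta T_n$, so that on $[0,t_n]$ the core stays inside the ball. Then:
\begin{itemize}
\item the uniform $L^2$ tails make the error in $X_R'$ equal to $o(1)$ uniformly on $[0,t_n]$, giving $|X_R(t_n)-X_R(0)|=o(1)\,T_n$;
\item the barycenter is comparable to the center up to a compactness radius plus $o(R)$ from the tail mass, weighted by $R$.
\end{itemize}
Together these yield $\Mcal\,\eta T_n\lesssim o(T_n)+\varepsilon R$, which is a contradiction once $\varepsilon\ll\eta$. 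The delicate points are measurability of $x(t)$ and the first-exit-time argument; I would use the infimum of times in an exceedance set and approximate from below.

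\emph{Step 4: conclusion.} Apply Proposition~\ref{prop:positive-virial-derivative} to $U_*$, which is admissible by Steps 1–2, with $R_T=D_{x_*}(T)+A_0$. Use the finite-mass endpoint bound \eqref{eq:finite-mass-endpoint}, i.e. the case $q=2$ of Proposition~\ref{prop:virial-action-bound}. This gives $4\kappa_0T\lesssim\Mcal^{1/2}(D_{x_*}(T)+A_0)=o(T)$, a contradiction. Hence $U\equiv0$.
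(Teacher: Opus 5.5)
Your Steps 1 and 2 are the paper's argument (Lemmas~\ref{lem:lowfreq-finite-mass}--\ref{lem:lowfreq-L2-precompact} and the Galilean normalization). Steps 3 and 4 take different routes, and Step 3 as written is missing one ingredient.

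\textbf{Step 3.} The paper does not use an exit time. In Lemma~\ref{lem:sublinear-drift} it first proves $D_j(T)=\sup_{0\le t\le T}|x_j(t)|<\infty$ from $U\in C([0,T];L^2)$ and mass tightness. It then ties the radius to the actual drift, $R=4(D_j(T)+L_\eta+1)$, and compares $B_{R,j}$ at $t=0$ with its value at a near-maximizing time. The resulting $18\eta D_j(T)$ error is absorbed into $mD_j(T)$ using $\eta<m/36$, which gives $D_j(T)\le 4\eta T/m+O_{m,\eta}(1)$ directly. You instead fix $R\sim\eta T_n$ and stop at the first exit time. This avoids absorption, but ``infimum of the exceedance set plus approximation from below'' does not control where the center is at $\tau_n:=\inf\{t\le T_n:|x_*(t)-x_*(0)|\ge\eta T_n\}$. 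Since $x(t)$ is merely measurable, the center could a priori still be near $x_*(0)$ there (infimum not attained). It could also have jumped far outside $B(x_*(0),R)$, in which case the truncated barycenter no longer sees $\Mcal(U)\,x_*(\tau_n)$.

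The missing fact is a bounded-jump property. Choose $L$ so that $\int_{|y-x_*(t)|\ge L}|U_*(t,y)|^2\dd y\le\Mcal(U)/8$ for all $t$. If $B(x_*(t),L)$ and $B(x_*(s),L)$ were disjoint, restricting to $B(x_*(s),L)$ would give $\|U_*(t)-U_*(s)\|_2\ge(\sqrt{7/8}-\sqrt{1/8})\Mcal(U)^{1/2}>\frac12\Mcal(U)^{1/2}$. Uniform $L^2$-continuity of $U_*$ on $[0,T_n]$ therefore gives $|x_*(t)-x_*(s)|\le 2L$ for $|t-s|$ small. Hence $\eta T_n-2L\le|x_*(\tau_n)-x_*(0)|\le\eta T_n+2L$, and $|x_*(t)-x_*(0)|\le\eta T_n+2L$ on $[0,\tau_n]$. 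With, say, $R=2\eta T_n$, the core stays at distance $\gtrsim\eta T_n$ from the cutoff region and your argument closes. Since the tail term is $\eps R\sim\eps\eta T_n$, the smallness you actually need is $\eps\ll\Mcal(U)$, not $\eps\ll\eta$. This repair uses the same continuity input the paper needs for $D_j(T)<\infty$, so it is cheap, but the exit-time step is incomplete without it.

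\textbf{Step 4.} Here your route is more economical than the paper's. Applying Proposition~\ref{prop:positive-virial-derivative} to $U_*$ with $R_T=D_{x_*}(T)+A_0$, together with the $q=2$ case of Proposition~\ref{prop:virial-action-bound}, is exactly Proposition~\ref{prop:quantitative-drift} with $q=2$. That is, $T\lesssim D_{x_*}(T)+1$, which contradicts $D_{x_*}(T)=o(T)$ immediately. Proposition~\ref{prop:zero-momentum-rigidity} in the paper re-derives the same contradiction with a diagonal radius $R_T=4(D_x(T)+L_{\eta_T}+1)+T^{1/2}$. The only hypothesis to check for $U_*$ is bounded-scale $\dot H^1$ compactness about $x_*(t)$. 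You correctly obtain it from $H^1$ precompactness of $\mathcal K_x$ and continuity of multiplication by $e^{iy\cdot\xi_*}$ on $H^1$.
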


We begin the proof of the low-frequency exclusion with the elementary high--low frequency decomposition.

\begin{lemma}[Finite mass from the low-frequency condition]\label{lem:lowfreq-finite-mass}
Under \eqref{eq:lowfreq-main},
\begin{equation}\label{eq:finite-mass-conclusion}
  \sup_{t\ge0}\|U(t)\|_2<\infty.
\end{equation}
Hence $U(t)\in H^1(\R^3)$, and $\Mcal(U)$ is finite and conserved.
\end{lemma}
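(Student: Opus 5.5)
The plan is to split each $U(t)$ into low and high frequencies at a fixed small threshold and estimate the two pieces separately. By Plancherel the split is orthogonal in $L^2$:
\[
  \|U(t)\|_2^2=\|P_{\le\nu}U(t)\|_2^2+\|P_{>\nu}U(t)\|_2^2 .
\]

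\textbf{Low frequencies.} By \eqref{eq:lowfreq-main} I fix one $\nu_0\in(0,1]$ such that
\[
  \sup_{t\ge0}\|P_{\le\nu_0}U(t)\|_2\le1 .
\]
Only the finiteness of this supremum at a single $\nu$ is needed, not the full vanishing limit. There is a subtlety here: $U(t)$ is only known to lie in $\dot H^1$, so $\widehat{U(t)}$ is a priori a locally integrable function with $|\xi|\widehat{U(t)}\in L^2$. The hypothesis \eqref{eq:lowfreq-main} is meaningful exactly when $\one_{\{|\xi|\le\nu\}}\widehat{U(t)}\in L^2$, and I read it that way.

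\textbf{High frequencies.} On the region $|\xi|>\nu_0$ we have $1\le|\xi|/\nu_0$, so
\[
  \|P_{>\nu_0}U(t)\|_2\le\nu_0^{-1}\|\nabla U(t)\|_2\le\nu_0^{-1}A_\alpha^{1/2},
\]
where the last bound is the uniform kinetic bound \eqref{eq:uniform-gradient-gap} of Lemma~\ref{lem:below-threshold-branch}. Adding the two pieces gives $\sup_t\|U(t)\|_2\le 1+\nu_0^{-1}A_\alpha^{1/2}<\infty$, which is \eqref{eq:finite-mass-conclusion}. Together with $U(t)\in\dot H^1$, this gives $U(t)\in H^1$ for every $t\ge0$.

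\textbf{Conservation of mass.} I would use Lemma~\ref{lem:local-mass-variation}. By \eqref{eq:localized-mass-growth} and the finite mass just proved,
\[
  |\mathfrak m_{R,y}'(t)|\lesssim R^{-1}\sup_s\|\nabla U(s)\|_2\,\sup_s\|U(s)\|_2 .
\]
Integrating in time gives $|\mathfrak m_{R,y}(t)-\mathfrak m_{R,y}(0)|\lesssim t/R$, and letting $R\to\infty$ with $\chi\equiv1$ near the origin and monotone convergence shows $\Mcal(U(t))=\Mcal(U(0))$. Alternatively one can quote the conservation stated after \eqref{eq:mass-momentum-intro} for $H^1$ solutions. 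The direct route avoids having to check that the $\dot H^1$ strong solution coincides with an $H^1$ solution.

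The main obstacle is the interpretation step for the low-frequency piece, that is, making sure $P_{\le\nu}U(t)$ is a genuine $L^2$ function. Once that is granted, the remaining steps are routine Plancherel and integration estimates.
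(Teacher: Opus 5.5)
Your proposal is correct and follows the paper's proof essentially step for step. The shared steps are the orthogonal split at a single fixed $\nu_0$, the bound $\|P_{>\nu_0}U(t)\|_2\le\nu_0^{-1}\|\nabla U(t)\|_2$ combined with the uniform kinetic bound, and mass conservation by integrating the localized identity of Lemma~\ref{lem:local-mass-variation} and sending $R\to\infty$. The one cosmetic difference is that the paper passes to the limit by dominated convergence, which is available since $|U(t)|^2\in L^1$ and $0\le\chi_R\le1$; this avoids needing $\chi$ to be radially monotone, which your monotone-convergence step tacitly requires.
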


\begin{proof}
By \eqref{eq:lowfreq-main}, choose $\nu_0\in(0,1)$ such that
\begin{equation*}
  M_{\rm lf}:=\sup_{t\ge0}\|P_{\le\nu_0}U(t)\|_2<\infty.
\end{equation*}
Plancherel and the inequality $|\xi|^{-2}\le\nu_0^{-2}$ on $\{|\xi|>\nu_0\}$ give
\begin{align*}
  \|P_{>\nu_0}U(t)\|_2^2
  &=\int_{|\xi|>\nu_0}|\widehat U(t,\xi)|^2\dd\xi\notag\\
  &\le\nu_0^{-2}
  \int_{|\xi|>\nu_0}|\xi|^2|\widehat U(t,\xi)|^2\dd\xi\notag\\
  &\le\nu_0^{-2}\|\nabla U(t)\|_2^2.
\end{align*}
Hence, using orthogonality of the frequency pieces and the threshold gradient bound,
\begin{align*}
  \sup_{t\ge0}\|U(t)\|_2^2
  &=\sup_{t\ge0}
  \left(\|P_{\le\nu_0}U(t)\|_2^2
  +\|P_{>\nu_0}U(t)\|_2^2\right)\notag\\
  &\le M_{\rm lf}^2
  +\nu_0^{-2}\sup_{t\ge0}\|\nabla U(t)\|_2^2
  <\infty.
\end{align*}
Thus \eqref{eq:finite-mass-conclusion} holds and $U\in L_t^\infty H_x^1$. The standard persistence property of the $H^1$ local theory then gives
\begin{equation*}
  U\in C([0,\infty);H^1(\R^3)).
\end{equation*}

To justify mass conservation, let $\chi\in C_c^\infty(\R^3)$ satisfy $0\le\chi\le1$, $\chi=1$ on $B(0,1)$, and, for $R>0$, define $\chi_R(x):=\chi(x/R)$. For $0\le t_1<t_2<\infty$, Lemma~\ref{lem:local-mass-variation} gives
\begin{align}
 &\left|\int\chi_R|U(t_2)|^2\dd x
  -\int\chi_R|U(t_1)|^2\dd x\right|\notag\\
 &\quad\le\frac{C}{R}
  \int_{t_1}^{t_2}\|\nabla U(t)\|_2\|U(t)\|_2\dd t\notag\\
 &\quad\le\frac{C|t_2-t_1|}{R}
  \|U\|_{L_t^\infty\dot H_x^1}
  \|U\|_{L_t^\infty L_x^2}.
  \label{eq:mass-cutoff-limit}
\end{align}
Letting $R\to\infty$ in \eqref{eq:mass-cutoff-limit} and using dominated convergence yields
\[
  \Mcal(U(t_2))=\Mcal(U(t_1)).
\]
\end{proof}

Finite mass is supplemented by a compactness upgrade that uses the low-, middle-, and high-frequency bands.

\begin{lemma}[$L^2$ precompactness]\label{lem:lowfreq-L2-precompact}
If \eqref{eq:lowfreq-main} holds, then the translated orbit $\mathcal K_x$ is precompact in $L^2(\R^3)$.
\end{lemma}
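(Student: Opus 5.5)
We will show that $\mathcal K_x$ is totally bounded in $L^2(\R^3)$ by splitting every element into a low-, middle-, and high-frequency band and checking that each band is either uniformly small or lies in a compact set. Fix $\eps>0$. By \eqref{eq:lowfreq-main}, choose $\nu\in(0,1)$ with $\sup_{t\ge0}\|P_{\le\nu}U(t)\|_2\le\eps$. Since $P_{\le\nu}$ commutes with translations, the same bound holds for every $f\in\mathcal K_x$. Next, Lemma~\ref{lem:below-threshold-branch} gives $H:=\sup_t\|\nabla U(t)\|_2<\infty$. Arguing as in Lemma~\ref{lem:lowfreq-finite-mass}, Plancherel yields
\[
\|P_{>M}f\|_2\le M^{-1}\|\nabla f\|_2\le M^{-1}H .
\]
Choose $M:=H/\eps$, so this term is at most $\eps$ uniformly on $\mathcal K_x$.

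It remains to treat the middle band $P_{\nu<|\nabla|\le M}$. The key observation is that on frequencies $|\xi|>\nu$ we have $|\widehat f(\xi)|\le\nu^{-1}|\xi||\widehat f(\xi)|$. Hence the map $f\mapsto P_{\nu<|\nabla|\le M}f$ is bounded from $\dot H^1$ to $L^2$ with norm at most $\nu^{-1}$. Indeed, $\|P_{\nu<|\nabla|\le M}(f-g)\|_2\le\nu^{-1}\|\nabla(f-g)\|_2$. A bounded (in particular continuous) map sends the precompact set $\mathcal K_x\Subset\dot H^1$ of Lemma~\ref{lem:translated-orbit-compact} to a precompact subset of $L^2$. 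Concretely, take a finite $\eps\nu$-net $f_1,\dots,f_J$ of $\mathcal K_x$ in $\dot H^1$. Then the functions $P_{\nu<|\nabla|\le M}f_j$ form an $\eps$-net in $L^2$ of the middle-band images.

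To assemble the pieces, for $f\in\mathcal K_x$ pick $j$ with $\|\nabla(f-f_j)\|_2\le\eps\nu$. Splitting $f-P_{\nu<|\nabla|\le M}f_j$ into the three bands gives
\[
\|f-P_{\nu<|\nabla|\le M}f_j\|_2\le\|P_{\le\nu}f\|_2+\|P_{>M}f\|_2+\nu^{-1}\|\nabla(f-f_j)\|_2\le3\eps .
\]
So $\mathcal K_x$ has a finite $3\eps$-net in $L^2$; note that the net elements need not lie in $\mathcal K_x$, which is harmless for total boundedness. Since $L^2$ is complete, total boundedness is equivalent to precompactness, which proves the claim. Finiteness of each $\|f\|_2$ has already been recorded in Lemma~\ref{lem:lowfreq-finite-mass}.

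The only step requiring care is the middle band, specifically the choice of ordering. We must fix $\nu$ first from the uniform low-frequency hypothesis, and only then take the $\dot H^1$ net at scale $\eps\nu$. With that order the constant $\nu^{-1}$ stays controlled. Without the uniformity in $t$ supplied by \eqref{eq:lowfreq-main}, $\dot H^1$ compactness alone could not prevent $L^2$ mass from escaping to frequency zero, and that is exactly the scenario the hypothesis excludes.
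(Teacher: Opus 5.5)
Your proof is correct and follows essentially the same route as the paper. You fix $\nu$ first from the uniform low-frequency hypothesis, control $|\xi|>M$ by Plancherel, bound the middle band by $\nu^{-1}\|\nabla(\cdot)\|_2$, and take a $\dot H^1$-net of $\mathcal K_x$ at scale $\nu\varepsilon$. The paper compares $f$ with the net element $f_j$ itself (getting $5\varepsilon$), while you use centers $P_{\nu<|\nabla|\le M}f_j$ outside $\mathcal K_x$ (getting $3\varepsilon$), which is equally valid; in both versions the cutoff $M$ is superfluous, since $P_{>\nu}$ already maps $\dot H^1$ to $L^2$ with norm at most $\nu^{-1}$.
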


\begin{proof}
Translations change the Fourier transform only by a unimodular factor.  Fix $\varepsilon>0$.  By \eqref{eq:lowfreq-main}, choose $0<\nu<1$ so that
\begin{equation}\label{eq:L2-precompact-low}
  \sup_{t\ge0}
  \|P_{\le\nu}U(t,\cdot+x(t))\|_2
  \le\varepsilon.
\end{equation}
Next choose $M>\nu$ so large that
\begin{equation*}
  M^{-1}\sup_{t\ge0}\|\nabla U(t)\|_2
  \le\varepsilon.
\end{equation*}
For $f,g\in\mathcal K_x$, Plancherel gives
\begin{align*}
  \|P_{>M}(f-g)\|_2
  &\le M^{-1}\|\nabla(f-g)\|_2\notag\\
  &\le2M^{-1}\sup_{t\ge0}\|\nabla U(t)\|_2
  \le2\varepsilon,
  \\
  \|P_{\nu<|\nabla|\le M}(f-g)\|_2^2
  &=\int_{\nu<|\xi|\le M}|\widehat{f-g}(\xi)|^2\dd\xi\notag\\
  &\le\nu^{-2}
  \int_{\nu<|\xi|\le M}|\xi|^2|\widehat{f-g}(\xi)|^2\dd\xi\notag\\
  &\le\nu^{-2}\|\nabla(f-g)\|_2^2.
  \notag
\end{align*}
Together with \eqref{eq:L2-precompact-low},
\begin{align}
  \|f-g\|_2
  &\le\|P_{\le\nu}(f-g)\|_2
  +\|P_{\nu<|\nabla|\le M}(f-g)\|_2
  +\|P_{>M}(f-g)\|_2\notag\\
  &\le4\varepsilon+\nu^{-1}\|\nabla(f-g)\|_2.
  \label{eq:L2-by-Hdot}
\end{align}
By Lemma~\ref{lem:translated-orbit-compact}, there exist an integer $J=J(\nu,\varepsilon)\ge1$ and functions $f_1,\dots,f_J\in\mathcal K_x$ such that every $f\in\mathcal K_x$ satisfies
\[
  \|\nabla(f-f_j)\|_2\le\nu\varepsilon
\]
for some $j\in\{1,\dots,J\}$.  Substitution into \eqref{eq:L2-by-Hdot} gives
\[
  \|f-f_j\|_2\le5\varepsilon.
\]
Thus $\mathcal K_x$ is totally bounded in $L^2$.  Its $L^2$ closure is compact.
\end{proof}

We use the following standard consequence of $L^2$ precompactness.

\begin{lemma}[Uniform tails of a precompact set]\label{lem:L2-compact-tail}
If $\mathcal C\Subset L^2(\R^3)$, then
\begin{equation}\label{eq:L2-tail-precompact}
  \lim_{L\to\infty}\sup_{f\in\mathcal C}
  \int_{|y|\ge L}|f(y)|^2\dd y=0.
\end{equation}
The analogous statement holds for the gradients of a set precompact in $\dot H^1$.
\end{lemma}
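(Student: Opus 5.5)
The plan is to run the same finite-net argument as in Lemma~\ref{lem:compact-set-tails}, now in $L^2$. We aim to show that for every $\varepsilon>0$ there is $L_\varepsilon$ with $\int_{|y|\ge L}|f|^2\dd y\le C\varepsilon^2$ for all $f\in\mathcal C$ and all $L\ge L_\varepsilon$.

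First fix $\varepsilon>0$. Since $\mathcal C$ is precompact in $L^2(\R^3)$, it is totally bounded, so there are finitely many $f_1,\dots,f_J\in\mathcal C$ with the following property: every $f\in\mathcal C$ satisfies $\|f-f_j\|_2\le\varepsilon$ for some $j=j(f)$. (If one prefers centers in the closure, any $\varepsilon$-net in $L^2$ works equally well.)

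Next, each $f_j$ lies in $L^2$, so dominated convergence gives $\int_{|y|\ge L}|f_j|^2\dd y\to0$ as $L\to\infty$. Because there are only finitely many $j$, we can choose a single $L_\varepsilon$ such that
\[
  \max_{1\le j\le J}\int_{|y|\ge L_\varepsilon}|f_j|^2\dd y\le\varepsilon^2 .
\]
For $L\ge L_\varepsilon$ and $f\in\mathcal C$, the elementary inequality $|a+b|^2\le2|a|^2+2|b|^2$ then gives
\[
  \int_{|y|\ge L}|f|^2\dd y
  \le2\|f-f_j\|_2^2+2\int_{|y|\ge L}|f_j|^2\dd y
  \le4\varepsilon^2 .
\]
Taking the supremum over $f\in\mathcal C$ and then letting $\varepsilon\downarrow0$ yields \eqref{eq:L2-tail-precompact}.

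The gradient statement is the same argument applied to the set $\{\nabla f:f\in\mathcal C\}$. This set is precompact in $L^2(\R^3;\C^3)$, because $\|\nabla f-\nabla g\|_2=\|f-g\|_{\dot H^1}$. It is also the gradient part of Lemma~\ref{lem:compact-set-tails}.

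No step is a real obstacle. The only point needing care is uniformity, which comes from taking the tail radius as a maximum over a finite net, exactly as in the proof of Lemma~\ref{lem:compact-set-tails}.
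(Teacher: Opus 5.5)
Your proof is correct and follows essentially the same route as the paper. The paper also covers $\mathcal C$ by a finite $\varepsilon$-net in $L^2$, picks one tail radius that works for the finitely many centers, and obtains a uniform tail bound $4\varepsilon^2$ from the triangle inequality. It then treats the gradient case by applying the same argument to $\{\nabla f:f\in\mathcal C\}\Subset L^2(\R^3;\C^3)$.
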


\begin{proof}
Fix $\varepsilon>0$. Cover $\mathcal C$ by finitely many $L^2$ balls of radius $\varepsilon$. Choose $L$ so that the $L^2$ tail of every center is at most $\varepsilon^2$. The triangle inequality gives a uniform tail at most $4\varepsilon^2$. Apply the same argument to $\{\nabla f:f\in\mathcal C\}\Subset L^2(\R^3;\C^3)$.
\end{proof}

Combining the $L^2$, $\dot H^1$, and critical $L^6$ tails gives the enhanced tightness needed for center control.

\begin{proposition}[Enhanced compactness]\label{prop:enhanced-compactness}
Under \eqref{eq:lowfreq-main}, for every $\eta>0$ there is $L_\eta<\infty$ such that
\begin{equation}\label{eq:enhanced-tightness}
  \sup_{t\ge0}
  \int_{|y-x(t)|\ge L_\eta}
  \bigl(|U(t,y)|^2+|\nabla U(t,y)|^2+|U(t,y)|^6\bigr)\dd y
  \le\eta.
\end{equation}
\end{proposition}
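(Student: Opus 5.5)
The plan is to prove \eqref{enhanced-tightness} one term at a time and then take the largest of three radii. Fix $\eta>0$. The gradient and critical $L^6$ contributions are already handled by Lemma~\ref{lem:physical-tails}. It gives a radius $A_{\rm c}(\eta/2)$ such that
\[
  \sup_{t\ge0}\int_{|y-x(t)|\ge A_{\rm c}(\eta/2)}\bigl(|\nabla U(t,y)|^2+|U(t,y)|^6\bigr)\dd y\le\frac\eta2 .
\]
(One could instead apply the $\dot H^1$ part of Lemma~\ref{lem:L2-compact-tail} to $\mathcal K_x$, via Lemma~\ref{lem:translated-orbit-compact}, together with Sobolev embedding for the $L^6$ part, but the physical-tail lemma already packages this.) So the only new ingredient is the mass tail.

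For the mass tail I would use the compactness upgrade. By Lemma~\ref{lem:lowfreq-L2-precompact}, the hypothesis \eqref{eq:lowfreq-main} implies $\mathcal K_x=\{U(t,\cdot+x(t)):t\ge0\}\Subset L^2(\R^3)$. Lemma~\ref{lem:L2-compact-tail} applied with $\mathcal C=\mathcal K_x$ then yields $L'$ such that
\[
  \sup_{t\ge0}\int_{|y|\ge L'}|U(t,y+x(t))|^2\dd y\le\frac\eta2 .
\]
The change of variables $y\mapsto y-x(t)$ is measure preserving, so this becomes
\[
  \sup_{t\ge0}\int_{|y-x(t)|\ge L'}|U(t,y)|^2\dd y\le\frac\eta2 .
\]

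Set $L_\eta:=\max\{A_{\rm c}(\eta/2),L'\}$. Each tail integral is nonincreasing in the radius, so summing the two displays gives \eqref{enhanced-tightness}.

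There is no real obstacle here. The substantive input, $L^2$ precompactness of the translated orbit, was already obtained in Lemma~\ref{lem:lowfreq-L2-precompact} from the low--middle--high frequency splitting. The only point to check is that the tails are taken about the same moving center $x(t)$ in every term, which holds because both compactness statements concern the same translated orbit $\mathcal K_x$. Measurability of $x(t)$ plays no role, since the bounds hold for each fixed $t$ before the supremum is taken.
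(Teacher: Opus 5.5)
Your proposal is correct and follows the paper's proof. The mass tail comes from Lemma~\ref{lem:lowfreq-L2-precompact} combined with Lemma~\ref{lem:L2-compact-tail}, the remaining tails come from compactness of the translated orbit, and $L_\eta$ is the maximum of the radii. The only cosmetic difference is that you take both the gradient and $L^6$ tails from Lemma~\ref{lem:physical-tails}, whereas the paper cites Lemma~\ref{lem:translated-orbit-compact} for the gradient tail; both routes are valid.
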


\begin{proof}
Use Lemma~\ref{lem:lowfreq-L2-precompact} and Lemma~\ref{lem:L2-compact-tail}, specifically \eqref{eq:L2-tail-precompact}, for the mass tail, Lemma~\ref{lem:translated-orbit-compact} for the gradient tail, and Lemma~\ref{lem:physical-tails} for the $L^6$ tail. Take the maximum of the three radii.
\end{proof}

The low-frequency hypothesis has the following convenient sufficient condition in negative regularity.

\begin{proposition}[Negative regularity criterion]\label{prop:negative-regularity-entrance}
If \eqref{eq:negative-regularity-entrance} holds for some $s>0$, then \eqref{eq:lowfreq-main} holds.
\end{proposition}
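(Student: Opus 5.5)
The argument is a direct Plancherel estimate on the low-frequency ball, applied uniformly in time. Assume \eqref{eq:negative-regularity-entrance} holds with some $s>0$, and set
\[
  C_s:=\sup_{t\ge0}\||\nabla|^{-s}U(t)\|_2<\infty .
\]
By the definition of $|\nabla|^{-s}$, this finiteness means that $|\xi|^{-s}\widehat{U}(t,\xi)$ is a genuine $L^2$ function for every $t\ge0$. No further integrability of $U(t)$ is needed: $\widehat U(t)$ is locally integrable on $\{|\xi|\le\nu\}$ because its weighted version is square-integrable there.

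For $0<\nu\le1$ and each $t\ge0$, we have $1\le\nu^{2s}|\xi|^{-2s}$ on $\{|\xi|\le\nu\}$. Plancherel therefore gives
\[
  \|P_{\le\nu}U(t)\|_2^2
  =\int_{|\xi|\le\nu}|\widehat U(t,\xi)|^2\dd\xi
  \le\nu^{2s}\int_{|\xi|\le\nu}|\xi|^{-2s}|\widehat U(t,\xi)|^2\dd\xi
  \le\nu^{2s}C_s^2 .
\]
Taking the supremum over $t\ge0$ yields
\[
  \sup_{t\ge0}\|P_{\le\nu}U(t)\|_2\le C_s\nu^{s}.
\]
The right-hand side tends to $0$ as $\nu\downarrow0$, since $s>0$. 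This is exactly \eqref{eq:lowfreq-main}, and it even comes with the explicit polynomial rate $\nu^s$.

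The only step needing care is the meaning of $\widehat U(t)$ when $U(t)$ lies merely in $\dot H^1$, since then $\widehat U(t)$ is a priori only a tempered distribution. I would handle this by noting that $\widehat U(t)$ agrees with a locally integrable function away from the origin, with $|\xi|\widehat U(t)\in L^2$. The hypothesis \eqref{eq:negative-regularity-entrance} then controls its square-integrability near $\xi=0$ as well, so the integrals above are legitimate. Beyond this interpretive point, the proof has no real obstacle.
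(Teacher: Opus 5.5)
Your proposal is correct and is essentially the paper's proof: on $\{|\xi|\le\nu\}$ you use Plancherel and $1\le\nu^{2s}|\xi|^{-2s}$ to get $\|P_{\le\nu}U(t)\|_2\le\nu^s\||\nabla|^{-s}U(t)\|_2$, then take the supremum in $t$ and let $\nu\downarrow0$. Your remark that $\widehat U(t)$ is a genuine locally integrable function near the origin is a harmless extra justification that the paper leaves implicit.
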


\begin{proof}
For $0<\nu\le1$,
\[
  \|P_{\le\nu}U(t)\|_2
  \le\nu^s\||\nabla|^{-s}U(t)\|_2.
\]
Take the supremum in $t$ and let $\nu\downarrow0$.
\end{proof}
Thus Proposition~\ref{prop:negative-regularity-entrance} shows that \eqref{eq:negative-regularity-entrance} is a sufficient condition for the low-frequency hypothesis \eqref{eq:lowfreq-main}.

\subsection{Galilean normalization and sublinear center drift}

We first record momentum conservation in the regularity class used below. For a smooth, rapidly decaying solution, set $\rho:=|U|^p$. For each $j\in\{1,2,3\}$, let $\partial_{x_j}$ and $\partial_{y_j}$ denote differentiation with respect to the indicated coordinate of the two integration variables. Differentiation and integration by parts give
\begin{align*}
  \frac{\dd}{\dd t}\Pcal_j(U(t))
  &=-\frac{2}{p}\int_{\R^3}(I_\alpha*\rho)(x)\,\partial_j\rho(x)\dd x,
  \\
  2\int_{\R^3}(I_\alpha*\rho)\,\partial_j\rho\dd x
  &=-\iint_{\R^3\times\R^3}
  \partial_{x_j}I_\alpha(x-y)\rho(x)\rho(y)\dd x\dd y\\
  &\quad-\iint_{\R^3\times\R^3}
  \partial_{y_j}I_\alpha(x-y)\rho(x)\rho(y)\dd x\dd y
  =0.
\end{align*}
Hence $\Pcal(U(t))=\Pcal(U(0))$. Approximation by smooth $H^1$ solutions extends this identity to the strong solutions considered here.

For $\xi\in\R^3$ define
\begin{equation}\label{eq:Galilean-transform}
  U^\xi(t,y)
  :=e^{i(y\cdot\xi-t|\xi|^2)}U(t,y-2\xi t).
\end{equation}
Since the Hartree nonlinearity depends only on $|U|$, $U^\xi$ solves \eqref{eq:Hartree}. For $U\in H^1$, with mass and momentum defined in \eqref{eq:mass-momentum-intro},
\begin{align}
  \Mcal(U^\xi)&=\Mcal(U),\label{eq:Galilean-mass}\\
  \Pcal(U^\xi)&=\Pcal(U)+\xi\Mcal(U),\label{eq:Galilean-momentum}\\
  \Ecal(U^\xi)&=\Ecal(U)+\xi\cdot\Pcal(U)
  +\frac12|\xi|^2\Mcal(U).\label{eq:Galilean-energy}
\end{align}
The identities \eqref{eq:Galilean-mass}, \eqref{eq:Galilean-momentum}, and \eqref{eq:Galilean-energy} imply that, for $\xi=\xi_*$ from \eqref{eq:zero-momentum-xi-intro},
\begin{equation}\label{eq:Galilean-zero}
  \Pcal(U^{\xi_*})=0,
  \qquad
  \Ecal(U^{\xi_*})=\Ecal(U)-\frac{|\Pcal(U)|^2}{2\Mcal(U)},
\end{equation}
and
\begin{equation}\label{eq:Galilean-gradient}
  \|\nabla U^{\xi_*}(t)\|_2^2
  =\|\nabla U(t)\|_2^2-\frac{|\Pcal(U)|^2}{\Mcal(U)}.
\end{equation}
Thus the threshold inequalities are preserved. The transformed center is
\begin{equation}\label{eq:transformed-center}
  x_*(t):=x(t)+2\xi_*t.
\end{equation}
Enhanced $H^1$ compactness is preserved because modulation by the fixed frequency $\xi_*$ is continuous in $H^1$.

The frequency-space input and the resulting physical-space geometry are summarized in the following figure.
\begin{figure}[H]
\centering
\begin{minipage}[t]{0.47\textwidth}
\centering
\textbf{Frequency space}\par\medskip
\begin{tikzpicture}[x=0.78cm,y=0.78cm,>=Latex,every node/.style={font=\small}]
  \draw[->] (0,0) -- (5.1,0) node[right] {$|\xi|$};
  \draw[->] (0,0) -- (0,3.2);
  \node[rotate=90,font=\footnotesize] at (-0.82,1.65) {frequency mass density};
  \fill[gray!18] (0,0) rectangle (1.45,2.9);
  \draw[dashed] (1.45,0) -- (1.45,2.9);
  \node[below] at (1.45,0) {$\nu$};
  \draw[thick]
    (0,0.12) .. controls (0.75,0.05) and (1.20,0.35) .. (1.65,1.10)
    .. controls (2.25,2.45) and (3.00,3.00) .. (3.65,1.75)
    .. controls (4.15,0.85) and (4.65,0.42) .. (5.00,0.30);
  \node[align=center,font=\footnotesize] at (0.95,1.45) {low-frequency\\band};
\end{tikzpicture}
\[
  \sup_{t\ge0}\|P_{\le\nu}U(t)\|_2\longrightarrow0
  \qquad(\nu\downarrow0).
\]
\end{minipage}
\hfill
\begin{minipage}[t]{0.47\textwidth}
\centering
\textbf{Physical space}\par\medskip
\begin{tikzpicture}[x=0.78cm,y=0.78cm,>=Latex,every node/.style={font=\small}]
  \coordinate (O) at (0.35,0.30);
  \draw[->] (O) -- (5.35,0.30) node[right] {$x_2$};
  \draw[->] (O) -- (0.35,3.20) node[above] {$x_3$};
  \draw[dashed,thick,->]
    (0.45,0.45) .. controls (1.10,1.20) and (1.85,0.75) .. (2.55,1.55)
    .. controls (3.15,2.10) and (3.85,1.40) .. (4.90,2.55);
  \coordinate (Xt) at (2.55,1.55);
  \shade[inner color=gray!55,outer color=gray!5] (Xt) circle (0.78);
  \draw[gray,dashed] (Xt) circle (0.78);
  \fill (Xt) circle (1.8pt);
  \node[above=2pt of Xt] {$x_*(t)$};
  \node[align=left,anchor=west] at (3.45,0.95) {$H^1$-precompact\\translated orbit};
\end{tikzpicture}
\[
  \begin{aligned}
  \Pcal(U^{\xi_*})&=0,\\
  \sup_{0\le t\le T}|x_*(t)-x_*(0)|&=o(T).
  \end{aligned}
\]
\end{minipage}
\caption{Low-frequency rigidity geometry. The condition $\sup_{t\ge0}\|P_{\le\nu}U(t)\|_2\to0$ as $\nu\downarrow0$ controls the infrared mass. Together with bounded-scale compactness it yields $H^1$ precompactness. The Galilean transform $U^{\xi_*}$ has zero momentum, and truncated barycenter estimates imply sublinear motion of the transformed center. The finite-mass endpoint bound is then incompatible with the linear-in-time growth generated by the positive localized virial derivative.}
\label{fig:low-frequency-rigidity}
\end{figure}
Figure~\ref{fig:low-frequency-rigidity} records the complementary Fourier- and physical-space components of the argument.

The local mass conservation law is
\begin{equation}\label{eq:local-mass-conservation}
  \partial_t|U|^2+2\nabla\cdot\operatorname{Im}(\overline U\nabla U)=0.
\end{equation}
Choose an odd $\Phi\in C^\infty(\R)$ satisfying
\begin{equation}\label{eq:Phi-properties}
  \Phi(s)=s\ (|s|\le1),
  \quad |\Phi(s)|\le2,
  \quad 0\le\Phi'(s)\le1,
  \quad \Phi'(s)=0\ (|s|\ge2).
\end{equation}
For $R>0$ and $j\in\{1,2,3\}$, set
\begin{equation}\label{eq:localized-center-mass}
  B_{R,j}(t):=\int_{\R^3}R\Phi(y_j/R)|U(t,y)|^2\dd y.
\end{equation}
If $\Pcal(U)=0$, then applying \eqref{eq:local-mass-conservation} to \eqref{eq:localized-center-mass} and using \eqref{eq:Phi-properties} gives
\begin{align}
  B_{R,j}'(t)
  &=2\int_{\R^3}\Phi'(y_j/R)
  \operatorname{Im}\bigl(\overline{U(t,y)}\,\partial_jU(t,y)\bigr)\dd y\notag\\
  &=2\int_{\R^3}\bigl(\Phi'(y_j/R)-1\bigr)
  \operatorname{Im}\bigl(\overline{U(t,y)}\,\partial_jU(t,y)\bigr)\dd y,
  \label{eq:Bprime-identity}
\end{align}
where the second equality uses $\Pcal_j(U)=0$. By \eqref{eq:Bprime-identity}, since $\Phi'(y_j/R)-1=0$ for $|y_j|\le R$ and $|\Phi'-1|\le1$,
\begin{equation}\label{eq:Bprime-tail}
  |B_{R,j}'(t)|
  \le2
  \left(\int_{|y_j|\ge R}|U(t,y)|^2\dd y\right)^{1/2}
  \left(\int_{|y_j|\ge R}|\nabla U(t,y)|^2\dd y\right)^{1/2}.
\end{equation}

The truncated barycenter estimate now converts zero momentum and enhanced tightness into sublinear center motion.

\begin{lemma}[Sublinear drift at zero momentum]\label{lem:sublinear-drift}
Let $U$ satisfy
\begin{equation*}
  U\in C([0,\infty);H^1(\R^3))\cap L_t^\infty H_x^1,
  \qquad \Pcal(U)=0,
\end{equation*}
and assume \eqref{eq:enhanced-tightness}. After a fixed spatial translation, assume $x(0)=0$. Then
\begin{equation}\label{eq:sublinear-drift}
  D_x(T)=o(T).
\end{equation}
\end{lemma}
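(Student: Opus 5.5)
The plan is the standard truncated-barycenter argument of Duyckaerts--Merle--Roudenko type, using $B_{R,j}$ from \eqref{eq:localized-center-mass} and the tail bound \eqref{eq:Bprime-tail}. The argument is by contradiction. Suppose there are $\varepsilon_0>0$ and $T_n\to\infty$ with $D_x(T_n)\ge\varepsilon_0T_n$. For each $n$ pick $t_n\in[0,T_n]$ with $|x(t_n)|\ge\varepsilon_0T_n/2$; since $x$ is not assumed continuous, the supremum need not be attained, but a near-maximizer suffices. A preliminary reduction will be useful. Define
\[
t_0(R):=\inf\{t\ge0:\ |x(t)|\ge R-L_\eta\}.
\]
The aim is to work on $[0,t_0(R)]$, where $|x(t)|\le R-L_\eta$. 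Then $\{|y_j|\ge R\}\subset\{|y-x(t)|\ge L_\eta\}$, and \eqref{eq:enhanced-tightness} gives $|B'_{R,j}(t)|\le 2\eta$.

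The key steps, in order:
\begin{enumerate}
\item Fix $\eta>0$ small relative to $\Mcal(U)$ and take $L_\eta$ from Proposition~\ref{prop:enhanced-compactness}. Since $U\not\equiv0$, $\Mcal(U)>0$ and $\Mcal(U)\ge\Mcal(U)/2$ holds trivially. More useful is the fact that tightness puts at least $\Mcal(U)-\eta$ of the mass inside $B(x(t),L_\eta)$.
\item Bound the barycenter at time $0$. With $x(0)=0$, the bound $|\Phi|\le 2$, the identity $\Phi(s)=s$ for $|s|\le1$, and the tail bound give
\[
|B_{R,j}(0)|\le L_\eta\Mcal(U)+2R\eta .
\]
\item Bound the barycenter from below at a time $t$ with $|x(t)|$ large. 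Choose $R:=|x(t)|+L_\eta$ and a coordinate $j$ with $|x_j(t)|\ge|x(t)|/\sqrt3$. On $B(x(t),L_\eta)$ we have $|y_j|\le R$, so $R\Phi(y_j/R)=y_j$ there, and $y_j$ has the sign of $x_j(t)$ with $|y_j|\ge|x_j(t)|-L_\eta$. Hence
\[
|B_{R,j}(t)|\ge(|x(t)|/\sqrt3-L_\eta)(\Mcal(U)-\eta)-2R\eta\gtrsim R\,\Mcal(U)
\]
when $\eta\ll\Mcal(U)$ and $R\gg L_\eta$.
\item Control the derivative along the whole path. The obstacle is that with this $R$ the condition $|x(s)|\le R-L_\eta$ may fail for intermediate times $s<t$. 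To fix this, I take $t$ to be a (near) first time the center reaches radius $\rho:=\varepsilon_0T_n/2$. Concretely, set $t=t_n^*$ with
\[
\sup_{s<t}|x(s)|\le\rho\le|x(t)|+1 .
\]
Since $x$ is only measurable, I would instead take $t$ as the infimum of $\{s:|x(s)|\ge\rho\}$ and approximate from above. Using $R=\rho+L_\eta+1$, we then get $|B'_{R,j}(s)|\le2\eta$ for $s<t$, and by continuity of $B_{R,j}$ (from $U\in C(H^1)$) also at $t$.
\item Combine the bounds. Integrating over $[0,t]$ gives
\[
cR\,\Mcal(U)-L_\eta\Mcal(U)-4R\eta\le 2\eta t\le 2\eta T_n\lesssim \eta R/\varepsilon_0 .
\]
Choosing $\eta\ll\varepsilon_0\Mcal(U)$ and letting $n\to\infty$ yields a contradiction.
\end{enumerate}

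The main obstacle is the measurability issue in step 4, together with a subtler point: at the first-passage time the core lies near the sphere of radius $\rho$, but the coordinate choice must still give a definite sign. That part is handled as above, since the whole core ball lies in $\{|y_j|\le R\}$ and has a coordinate sign fixed by $x_j(t)$.

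To obtain the required measurable near-first-passage time, I would replace $x(t)$ by any center within bounded distance. Compactness shows that any point carrying mass at least $\Mcal(U)/2$ in a ball of radius $L_\eta$ lies within $2L_\eta$ of $x(t)$. This lets me work with the modified function, and the $\varepsilon_0T_n$ versus $O(L_\eta)$ slack absorbs all such errors.
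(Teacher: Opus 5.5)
Your proposal is correct in substance, but it handles the central difficulty differently from the paper. Both proofs use the same ingredients: the truncated barycenter $B_{R,j}$, the zero-momentum bound \eqref{eq:Bprime-tail}, and tightness to compare $B_{R,j}(t)$ with $\Mcal(U)x_j(t)$.

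\textbf{How the paper handles your step 4.} The obstacle you isolate there is that a radius adapted to the current position need not dominate the center at earlier times. The paper removes it without any first-passage time.
\begin{itemize}
\item It first shows $D_j(T)=\sup_{[0,T]}|x_j|<\infty$, using $U\in C([0,\infty);H^1)$ and tightness. This is essentially your $2L_\eta$-localization remark.
\item It then freezes $R=4(D_j(T)+L_\eta+1)$. This radius dominates the drift on all of $[0,T]$, so $|B_{R,j}'|\le2\eta$ on the whole interval.
\item It compares $B_{R,j}$ at a near-maximizer of $|x_j|$ with $B_{R,j}(0)$.
\end{itemize}
The price is an error $(4R+2D_j(T))\eta=18\eta D_j(T)+O(1)$. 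This is absorbed into $\Mcal(U)D_j(T)$ because $\eta<\Mcal(U)/36$. The result is the direct bound $\limsup_{T\to\infty}D_j(T)/T\le4\eta/\Mcal(U)$, with no contradiction argument and no regularity of $x$ beyond finiteness of the supremum.

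\textbf{What your route buys and costs.} Your first-passage route avoids the absorption and the separate finiteness step, since the passage estimate itself bounds $x$ on compact intervals. The cost is that it must cope with the discontinuity of $x$ at $t^*=\inf\{s:|x(s)|\ge\rho\}$; the real issue is discontinuity, not measurability.

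\textbf{How to make your step 3 rigorous.} Take $s_k\downarrow t^*$ with $|x(s_k)|\ge\rho$, fix $j$ and the sign of $x_j(s_k)$ along a subsequence, and pass to the limit. This uses continuity of $t\mapsto B_{R,j}(t)$, which follows from $U\in C([0,\infty);L^2)$ and $|R\Phi|\le2R$. One claim in step 3 must be repaired: $|x(s_k)|$ may overshoot $\rho$ by an uncontrolled amount, so ``$R\Phi(y_j/R)=y_j$ on the core'' can fail for $R=\rho+L_\eta+1$. Either of the following fixes works.
\begin{itemize}
\item Replace that claim by $|R\Phi(y_j/R)|\ge\min(|y_j|,R)$, with the sign of $y_j$. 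This follows from $\Phi$ being odd with $\Phi'\ge0$.
\item Use the local oscillation bound $|x(s)-x(t^*)|\le2L_\eta$ for $s$ near $t^*$, valid for $\eta<\Mcal(U)/4$. This places $x(t^*)$ within $2L_\eta$ of the sphere of radius $\rho$, at the cost of enlarging $R$ to $\rho+3L_\eta+1$.
\end{itemize}
With either fix, your step 5 closes as written.
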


\begin{proof}
Fix $j\in\{1,2,3\}$ and $T\ge1$, and set
\begin{equation*}
  m:=\Mcal(U)>0.
\end{equation*}
We first verify that $x$ is bounded on $[0,T]$. Otherwise, there are $t_n\in[0,T]$ and $t_*\in[0,T]$ such that, after passing to a subsequence,
\begin{equation*}
  t_n\to t_*,
  \qquad |x(t_n)|\to\infty.
\end{equation*}
Let $L_{m/4}$ be the radius furnished by \eqref{eq:enhanced-tightness} with $\eta=m/4$. Since $U\in C([0,T];H^1)$,
\begin{align*}
  \frac{3m}{4}
  &\le\int_{B(x(t_n),L_{m/4})}|U(t_n,y)|^2\dd y\notag\\
  &\le2\|U(t_n)-U(t_*)\|_2^2
  +2\int_{B(x(t_n),L_{m/4})}|U(t_*,y)|^2\dd y
  \longrightarrow0,
\end{align*}
which is impossible. Therefore the quantity
\begin{equation*}
  D_j(T):=\sup_{0\le t\le T}|x_j(t)|
\end{equation*}
is finite. Choose $t_{j,T}\in[0,T]$ so that
\begin{equation}\label{eq:Dj-almost-max}
  |x_j(t_{j,T})|\ge D_j(T)-1.
\end{equation}
Fix $\eta$ with
\begin{equation}\label{eq:eta-small-for-drift}
  0<\eta<\frac{m}{36},
\end{equation}
choose $L_\eta$ from \eqref{eq:enhanced-tightness}, and define
\begin{equation}\label{eq:R-barycenter-choice}
  R:=4\bigl(D_j(T)+L_\eta+1\bigr).
\end{equation}
For every $t\in[0,T]$ and $|y_j|\ge R$, one has
\begin{align}
  |y-x(t)|
  &\ge |y_j-x_j(t)|
\notag\\
  &\ge |y_j|-|x_j(t)|
\notag\\
  &\ge R-D_j(T)
\notag\\
  &\ge L_\eta.
  \label{eq:coordinate-tail-inclusion}
\end{align}
Consequently, \eqref{eq:Bprime-tail}, \eqref{eq:enhanced-tightness}, and \eqref{eq:coordinate-tail-inclusion} imply
\begin{align}
  |B_{R,j}'(t)|
  &\le2
  \left(\int_{|y-x(t)|\ge L_\eta}|U(t,y)|^2\dd y\right)^{1/2}
  \left(\int_{|y-x(t)|\ge L_\eta}|\nabla U(t,y)|^2\dd y\right)^{1/2}\notag\\
  &\le2\eta.
  \label{eq:Bprime-eta}
\end{align}
Integrating \eqref{eq:Bprime-eta} gives
\begin{equation}\label{eq:B-difference-small}
  |B_{R,j}(t_{j,T})-B_{R,j}(0)|\le2\eta T.
\end{equation}

On the compact core $|y-x(t)|\le L_\eta$, \eqref{eq:R-barycenter-choice} gives
\begin{equation*}
  |y_j|
  \le |x_j(t)|+L_\eta
  \le D_j(T)+L_\eta
  <\frac R2,
\end{equation*}
so $R\Phi(y_j/R)=y_j$. Splitting the integral in \eqref{eq:localized-center-mass} into the core and its complement, and using $|\Phi|\le2$, yields
\begin{align}
 &|B_{R,j}(t)-m x_j(t)|
\notag\\
 &\quad\le
  \int_{|y-x(t)|\le L_\eta}|y_j-x_j(t)|\,|U(t,y)|^2\dd y
\notag\\
 &\qquad+
  \int_{|y-x(t)|>L_\eta}
  \bigl(|R\Phi(y_j/R)|+|x_j(t)|\bigr)|U(t,y)|^2\dd y
\notag\\
 &\quad\le L_\eta m+\bigl(2R+D_j(T)\bigr)\eta.
  \label{eq:B-center-comparison}
\end{align}
Apply \eqref{eq:B-center-comparison} at $t=t_{j,T}$ and $t=0$, use $x_j(0)=0$, \eqref{eq:Dj-almost-max}, and \eqref{eq:B-difference-small}. Since \eqref{eq:R-barycenter-choice} gives
\begin{equation*}
  4R+2D_j(T)
  =18D_j(T)+16L_\eta+16,
\end{equation*}
we obtain
\begin{align}
  m\bigl(D_j(T)-1\bigr)
  &\le2\eta T+2L_\eta m
  +\bigl(4R+2D_j(T)\bigr)\eta
\notag\\
  &=2\eta T+2L_\eta m
  +\bigl(18D_j(T)+16L_\eta+16\bigr)\eta.
  \label{eq:Dj-preabsorption}
\end{align}
Define
\begin{equation*}
  C_{m,\eta}:=m+2L_\eta m+(16L_\eta+16)\eta.
\end{equation*}
Rearranging \eqref{eq:Dj-preabsorption} and using \eqref{eq:eta-small-for-drift},
\begin{align}
  (m-18\eta)D_j(T)
  &\le2\eta T+C_{m,\eta},
  \label{eq:Dj-absorbed}\\
  m-18\eta&\ge\frac m2.
  \notag
\end{align}
Divide \eqref{eq:Dj-absorbed} by $T$ and let $T\to\infty$:
\begin{equation*}
  \limsup_{T\to\infty}\frac{D_j(T)}T
  \le\frac{4\eta}{m}.
\end{equation*}
Finally let $\eta\downarrow0$ to obtain $D_j(T)=o(T)$. Since
\begin{equation*}
  D_x(T)
  \le D_1(T)+D_2(T)+D_3(T),
\end{equation*}
we obtain \eqref{eq:sublinear-drift}.
\end{proof}

The finite-mass virial estimate then rules out the resulting zero-momentum compact element.

\begin{proposition}[Finite-mass zero-momentum rigidity]\label{prop:zero-momentum-rigidity}
No nonzero bounded-scale compact critical element $U\in C([0,\infty);H^1(\R^3))\cap L_t^\infty H_x^1$ can simultaneously satisfy \eqref{eq:enhanced-tightness} and $\Pcal(U)=0$.
\end{proposition}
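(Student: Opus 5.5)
We argue by contradiction: assume $U\not\equiv0$ satisfies the hypotheses. We combine Lemma~\ref{lem:sublinear-drift} with the positive localized virial derivative of Proposition~\ref{prop:positive-virial-derivative} and the finite-mass endpoint bound \eqref{eq:finite-mass-endpoint}.

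First, translate space so that $x(0)=0$. This is harmless: translation preserves the equation, the threshold inequalities, compactness, tightness and momentum. Since $U\in C([0,\infty);H^1)\cap L_t^\infty H_x^1$, has $\Pcal(U)=0$, and satisfies \eqref{eq:enhanced-tightness}, Lemma~\ref{lem:sublinear-drift} gives $D_x(T)=o(T)$. Here $\Mcal(U)>0$ because $U\not\equiv0$, and the mass is conserved by the argument of Lemma~\ref{lem:lowfreq-finite-mass}, using $U\in L_t^\infty H^1_x$.

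Next, apply Proposition~\ref{prop:positive-virial-derivative}. It requires only \eqref{eq:below-threshold}, \eqref{eq:AP-intro} and \eqref{eq:bounded-scale-intro}, all of which hold for a bounded-scale compact critical element. It produces $A_0\ge1$ such that, with $R_T:=D_x(T)+A_0$ frozen on $[0,T]$, we have $\Vcal_{R_T,z}'(t)\ge4\kappa_0$ for $0\le t\le T$, where $\kappa_0=2c_1\Ecal(U_0)>0$ by Lemma~\ref{lem:positive-virial-gap}. Integrating over $[0,T]$ gives
\[
  4\kappa_0T\le|\Vcal_{R_T,z}(T)|+|\Vcal_{R_T,z}(0)|.
\]
For the endpoints, use \eqref{eq:aR-gradient} and Cauchy--Schwarz with the full $L^2$ norm. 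Equivalently, apply Proposition~\ref{prop:virial-action-bound} with $q=2$, where $M_2(U)=\sup_t\|U(t)\|_2=\Mcal(U)^{1/2}<\infty$ and $\theta(2)=0$. This yields
\[
  |\Vcal_{R_T,z}(t)|\lesssim R_T\,\|\nabla U\|_{L_t^\infty L_x^2}\,\Mcal(U)^{1/2}.
\]
Hence $T\lesssim_U D_x(T)+A_0$. Dividing by $T$ and letting $T\to\infty$, the sublinear drift makes the right side $o(T)$, which contradicts $4\kappa_0>0$. Therefore $U\equiv0$.

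The only point needing care is the applicability of Lemma~\ref{lem:sublinear-drift}, namely positivity of the mass and the normalization $x(0)=0$; the rest is direct assembly of earlier results. The virial-side step has no hidden difficulty, because Proposition~\ref{prop:positive-virial-derivative} already handles the localization errors uniformly on $[0,T]$ through Lemma~\ref{lem:uniform-Hartree-error}. The enhanced tightness is therefore used only to obtain the sublinear drift, not in the virial estimate.
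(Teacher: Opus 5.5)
Your proposal is correct and follows the paper's argument: translate so that $x(0)=0$, obtain $D_x(T)=o(T)$ from Lemma~\ref{lem:sublinear-drift}, integrate a positive localized virial derivative over $[0,T]$, and bound the endpoints by the finite-mass estimate, which is linear in the radius. The one difference is harmless. You cite Proposition~\ref{prop:positive-virial-derivative} with the fixed radius $R_T=D_x(T)+A_0$, whereas the paper rebuilds the positivity step using a diagonal tolerance $\eta_T\downarrow0$ and the radius $R_T=4(D_x(T)+L_{\eta_T}+1)+T^{1/2}$; your shortcut is valid because the virial errors involve only the $\dot H^1$ and $L^6$ tails, so the enhanced $L^2$ tightness is needed only for the sublinear drift.
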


\begin{proof}
Assume, to the contrary, that $U$ is nonzero.  Translate in space so that $x(0)=0$.  By Lemma~\ref{lem:sublinear-drift},
\begin{equation*}
  D_x(T)=o(T).
\end{equation*}
To choose the tail tolerance quantitatively, set $\eta_n:=2^{-n}$ for $n\ge1$. Select an increasing sequence $T_n\to\infty$ such that
\begin{equation*}
  T_n\ge nL_{\eta_n},
  \qquad
  T_{n+1}\ge T_n+1.
\end{equation*}
For $T\in[T_n,T_{n+1})$, define
\begin{equation*}
  \eta_T:=\eta_n.
\end{equation*}
Then $\eta_T\downarrow0$ and
\begin{equation}\label{eq:diagonal-tail-radius}
  0\le\frac{L_{\eta_T}}{T}
  \le\frac{L_{\eta_n}}{T_n}
  \le\frac1n
  \longrightarrow0.
\end{equation}
For $T\ge T_1$, define
\begin{equation*}
  R_T:=4\bigl(D_x(T)+L_{\eta_T}+1\bigr)+T^{1/2}.
\end{equation*}
Combining Lemma~\ref{lem:sublinear-drift} with \eqref{eq:diagonal-tail-radius}, we obtain
\begin{equation}\label{eq:zero-momentum-R-sublinear}
  \frac{R_T}{T}
  \le4\frac{D_x(T)}T
  +4\frac{L_{\eta_T}}T
  +\frac4T+T^{-1/2}
  \longrightarrow0.
\end{equation}
Moreover, for $0\le t\le T$ and $|y|\ge R_T$,
\begin{align*}
  |y-x(t)|
  &\ge |y|-|x(t)|
  \ge R_T-D_x(T)\notag\\
  &\ge L_{\eta_T},
\end{align*}
so the kinetic, annular, and Hartree errors in Lemma~\ref{lem:localized-Hartree-error} are $o_T(1)$ uniformly on $[0,T]$.  Using \eqref{eq:K-positive-gap}, for all sufficiently large $T$,
\begin{equation*}
  \Vcal_{R_T,0}'(t)
  \ge8\kappa_0-4\kappa_0
  =4\kappa_0,
  \qquad 0\le t\le T.
\end{equation*}
Integrating and applying the finite-mass endpoint estimate,
\begin{align*}
  4\kappa_0T
  &\le\Vcal_{R_T,0}(T)-\Vcal_{R_T,0}(0)\notag\\
  &\le|\Vcal_{R_T,0}(T)|+|\Vcal_{R_T,0}(0)|\notag\\
  &\lesssim_U R_T\|U\|_{L_t^\infty L_x^2}
  =o(T),
\end{align*}
where the last equality follows from \eqref{eq:zero-momentum-R-sublinear}.  This contradiction proves the proposition.
\end{proof}

\begin{proof}[Proof of Proposition~\ref{prop:lowfreq-rigidity}]
Lemmas~\ref{lem:lowfreq-finite-mass} and~\ref{lem:lowfreq-L2-precompact} prove \eqref{eq:lowfreq-conclusions-intro}; Proposition~\ref{prop:enhanced-compactness} gives \eqref{eq:enhanced-tightness}. Apply the Galilean transformation \eqref{eq:Galilean-transform} with $\xi=\xi_*$. The transformed center is given by \eqref{eq:transformed-center}. Equations \eqref{eq:Galilean-zero}--\eqref{eq:Galilean-gradient} preserve the threshold branch, and the transformed translated orbit remains precompact in $H^1$. Its momentum is zero. Lemma~\ref{lem:sublinear-drift} gives \eqref{eq:sublinear-center-intro}, and Proposition~\ref{prop:zero-momentum-rigidity} excludes the transformed solution. Since the Galilean transformation is invertible, the original nonzero compact element cannot exist.
\end{proof}

\section{Proofs of the scattering criteria}

\begin{proof}[Proof of Theorem~\ref{thm:window-scattering}]
Assume that the asserted scattering conclusion fails for some $u_0\in\mathfrak A$. By the bounded-scale compactness reduction, after reversing time when necessary, there exists a nonzero forward-global bounded-scale compact critical element $U$ satisfying one of the alternatives in Theorem~\ref{thm:window-scattering}.

Suppose first that \eqref{eq:window-scattering-condition} holds. Since $\delta_n\downarrow0$, choose $n_0$ so that $\delta_{n_0}<\delta_*$, where $\delta_*$ is defined by \eqref{eq:delta-star-definition}. For every $n\ge n_0$, monotonicity of $\Aopt_T(\delta)$ in $\delta$ gives
\begin{align*}
  \Aopt_{T_n}(\delta_{n_0})
  &\le \Aopt_{T_n}(\delta_n),\\
  \liminf_{T\to\infty}
  \frac{\Aopt_T(\delta_{n_0})}{T^{1/2}}
  &\le \liminf_{n\to\infty}
  \frac{\Aopt_{T_n}(\delta_n)}{T_n^{1/2}}<\infty.
\end{align*}
Proposition~\ref{prop:window-rigidity} therefore gives $U\equiv0$, a contradiction.

Suppose next that \eqref{eq:drift-scattering-condition} holds. Proposition~\ref{prop:quantitative-drift} implies
\[
  1\le C(U,q,\alpha)
  \frac{\bigl(D_x(T)+1\bigr)^{1+\theta(q)}}{T},
  \qquad T\ge1,
\]
which contradicts \eqref{eq:drift-scattering-condition}. The exponent identities in Corollary~\ref{cor:three-drift-layers} give the sufficient rates in \eqref{eq:drift-rate-examples}.

Thus no solution with initial datum in $\mathfrak A$ can fail the conclusion \eqref{eq:global-and-finite-Z-intro}. Proposition~\ref{prop:finite-Z-scattering}, applied forward and backward, yields $u_\pm\in\dot H^1$ and \eqref{eq:two-sided-scattering-intro}; the limiting form of \eqref{eq:Duhamel-intro} gives \eqref{eq:duhamel-scattering-intro}.
\end{proof}

\begin{proof}[Proof of Theorem~\ref{thm:lowfreq-scattering}]
Assume again that the scattering conclusion fails. The bounded-scale compactness reduction produces a nonzero forward-global bounded-scale compact critical element $U$. Under \eqref{eq:lowfreq-main}, Proposition~\ref{prop:lowfreq-rigidity} gives $U\equiv0$, a contradiction. If instead the stronger condition \eqref{eq:negative-regularity-entrance} is assumed, Proposition~\ref{prop:negative-regularity-entrance} first yields \eqref{eq:lowfreq-main}, and the same contradiction follows.

Consequently \eqref{eq:global-and-finite-Z-intro} holds for every $u_0\in\mathfrak A$. Proposition~\ref{prop:finite-Z-scattering} gives the states $u_\pm$ and \eqref{eq:two-sided-scattering-intro}; passing to the limit in \eqref{eq:Duhamel-intro} yields \eqref{eq:duhamel-scattering-intro}.
\end{proof}

\section*{Funding}
P.-H. Chung was supported by the Project ``Research on Nonlinear Partial Differential Equations'' (No.~2024KYCXTD018), the Special Projects in Key Areas of Guangdong Province (No.~ZDZX1088), and the Fund of Guangzhou Municipal Science and Technology (No.~202102080428).

\section*{Declaration of competing interest}
The authors declare that they have no known competing financial interests or personal relationships that could have appeared to influence the work reported in this paper.

\section*{Data availability}
No data were used for the research described in this article.

\end{document}